\documentclass[12pt]{amsart} 
\usepackage{amsfonts,graphics,amsmath,amsthm,amsfonts,
amscd,amssymb,amsmath,enumitem}

\usepackage{soul}

\newcommand{\factor}[2]{\left. \raise 2pt\hbox{\ensuremath{#1}} \right/
        \hskip -2pt\raise -2pt\hbox{\ensuremath{#2}}}

\usepackage{soul}
\usepackage[all]{xy}
\usepackage{color}
\usepackage{hyperref}
\RequirePackage[dvipsnames,usenames]{xcolor}

\hypersetup{
bookmarks,
bookmarksdepth=3,
bookmarksopen,
bookmarksnumbered,
pdfstartview=FitH,
colorlinks,backref,hyperindex,
linkcolor=Sepia,
anchorcolor=BurntOrange,
citecolor=MidnightBlue,
citecolor=OliveGreen,
filecolor=BlueViolet,
menucolor=Yellow,
urlcolor=OliveGreen
}

\usepackage[left=1.02in,top=1.0in,right=1.02in,bottom=1.0in]{geometry}

\pagestyle{plain}

\usepackage{xparse}
\usepackage{tikz}
\usepackage{adjustbox}
\makeatletter
\newcommand{\uset}[3][0ex]{%
  \mathrel{\mathop{#3}\limits_{
    \vbox to#1{\kern-3\ex@
    \hbox{$\scriptstyle#2$}\vss}}}}
\makeatother

\NewDocumentCommand\RightArrow{O{2.0ex} O{black}}{%
  \mathrel{ \tikz[baseline] 
   \draw [line width=0.5pt, decoration={markings,mark=at position 1 with {\arrow[scale=2, line width=0.25pt]{to}}},  postaction={decorate}, #2] (0,0pt) -- ++(#1,0pt);}
   }
\newcommand{\hocolim}{\mathop{\uset{\RightArrow[35pt]}{\rm hocolim}}		}
\newcommand{\holim}{\mathop{\uset{\RightArrow[-28pt]}{\rm holim}}		}

\theoremstyle{plain}
\newtheorem{theorem}{Theorem}[section]
\newtheorem{assumption}[theorem]{Assumption}
\newtheorem{corollary}[theorem]{Corollary}
\newtheorem{lemma}[theorem]{Lemma}
\newtheorem{claim}[theorem]{Claim}

\newtheorem{proposition}[theorem]{Proposition}
\newtheorem{definition-lemma}[theorem]{Definition-Lemma}
\newtheorem{question}[theorem]{Question}

\newtheorem{definition}[theorem]{Definition}

\theoremstyle{definition}
\newtheorem{remark}[theorem]{Remark}
\newtheorem{example}[theorem]{Example}
\newtheorem{notation}[theorem]{Notation}

\def\e{\Cal E}

\def\e1{E_1}
\def\e2{E_2}

\def\OO{\mathcal O}

\def\QQ{\mathcal Q}

\newcommand{\bN}{{\mathbb{N}}}
\newcommand{\bZ}{{\mathbb{Z}}}
\newcommand{\sA}{\mathcal{A}}
\newcommand{\sC}{\mathcal{C}}
\newcommand{\sD}{\mathcal{D}}
\newcommand{\sE}{\mathcal{E}}
\newcommand{\sF}{\mathcal{F}}
\newcommand{\sK}{\mathcal{K}}
\newcommand{\sG}{\mathcal{G}}
\newcommand{\sH}{{\mathcal{H}}}
\newcommand{\sM}{\mathcal{M}}
\newcommand{\sO}{{\mathcal{O}}}
\newcommand{\sP}{{\mathcal{P}}}

\DeclareMathOperator{\id}{id}
\DeclareMathOperator{\Hom}{Hom}
\DeclareMathOperator{\im}{im}

\DeclareMathOperator{\pr}{pr}
\DeclareMathOperator{\codim}{codim}
\DeclareMathOperator{\Ext}{Ext}
\DeclareMathOperator{\depth}{depth}
\DeclareMathOperator{\Spec}{Spec}
\DeclareMathOperator{\Supp}{Supp}
\DeclareMathOperator{\coker}{coker}
\DeclareMathOperator{\Id}{Id}

\usepackage{tikz}
\usetikzlibrary{decorations.markings}
\usepackage{xparse}
\usepackage{mathtools}
\usepackage{minibox}

\NewDocumentCommand\DownArrow{O{2.0ex} O{black}}{%
   \mathrel{\tikz[baseline] \draw [<-, line width=0.5pt, #2] (0,0) -- ++(0,#1);}
}

\NewDocumentCommand\UpArrow{O{2.0ex} O{black}}{%
   \mathrel{\tikz[baseline] \draw [line width=0.5pt, decoration={markings,mark=at position 1 with {\arrow[scale=2, line width=0.25pt]{to}}},  postaction={decorate}, #2] (0,0) -- ++(0,#1);}
}

\newcommand{\expl}[2]{\underset{\mathclap{\minibox[c]{$\UpArrow[10pt]$\\ \fbox{\footnotesize #2}}}}{#1}}

\newcommand{\explshift}[3]{\underset{\mathclap{\minibox[c]{$\UpArrow[10pt]$\\ \hspace{#1} \fbox{\footnotesize #3}}}}{#2}}

\newcommand{\explabove}[2]{\overset{\mathclap{\minibox[c]{ \fbox{\footnotesize #2} \\ $\DownArrow[10pt]$}}}{#1}}

\newcommand{\explaboveparshift}[4]{\overset{\mathclap{\minibox[c]{ \hspace{#2} \fbox{\parbox{#1}{\footnotesize #4}} \\ $\DownArrow[10pt]$}}}{#3}}

\newcommand{\explparshift}[4]{\underset{\mathclap{\minibox[c]{$\UpArrow[10pt]$\\ \hspace{#2} \fbox{\parbox{#1}{\footnotesize #4}}}}}{#3}}

\newcommand{\ass}{\sA ss}

\author{Christopher D. Hacon}
\address{Department of Mathematics \\
University of Utah\\
Salt Lake City, UT 84112, USA}
\email{hacon@math.utah.edu}
\author{Zsolt Patakfalvi}
\address{\'Ecole Polytechnique F\'ed\'erale de Lausanne (EPFL), MA C3 635, Station 8, 1015 Lausanne, Switzerland}
\email{zsolt.patakfalvi@epfl.ch}

\begin{document}
\title{Generic vanishing in characteristic $p>0$ and the geometry of theta divisors}

\begin{abstract}
In this paper we prove a strengthening of the generic vanishing result in characteristic $p>0$ given in \cite{HP16}. As a consequence of this result, we show that irreducible $\Theta$ divisors are strongly F-regular and we prove a related result for pluri-theta divisors.
\end{abstract}
\maketitle
\bigskip
\begin{center}{\it To Fabrizio Catanese on the occasion of his 70-th birthday}
\end{center}
\tableofcontents
\section{Introduction} 

Since the work of Green and Lazarsfeld \cite{GL90} and \cite{GL91} it has been clear that generic vanishing plays a fundamental role in the geometry of complex projective irregular varieties, i.e. varieties such that the Albanese map is non-trivial. Combining these results with the Fourier-Mukai transform \cite{Mukai81} and Koll\'ar's results on (higher) direct images of dualizing sheaves \cite{Kollar86I,Kollar86II}, many surprisingly far reaching and precise results on the birational geometry of complex projective varieties have been proven over the last couple of decades. In \cite{Hacon04} a new more functorial perspective was introduced and in particular it was shown that generic vanishing is a consequence of traditional vanishing theorems and Koll\'ar's results above on higher direct images. Unluckily, it is well known that (generic) vanishing theorems and the results of
\cite{Kollar86I,Kollar86II} fail in positive characteristic  \cite{HK15}. Nevertheless recently, using the theory of Cartier modules and Serre vanishing, a technical generalization of generic vanishing was proven in \cite{HP16} for projective varieties over a field of characteristic $p>0$.
Despite its technical nature, this result leads the way to some remarkable geometric applications such as a characterization of (ordinary) abelian varieties \cite{HPZ19}. 
In this paper we further refine the results of \cite{HP16} proving a more precise generic vanishing result in characteristic $p>0$ and we use this result to investigate the geometry of pluri-theta divisors. 


\subsection{Generic vanishing theorems} 

\subsubsection{Characteristic zero background}
Let $X$ be a complex projective manifold and $a:X\to A$ its Albanese morphism. Green and Lazarsfeld study the cohomological support loci 
\[V^i(\omega _X):=\big\{ \ P\in {\rm Pic }^0(A)\ \big|\ h^i(\omega _X\otimes P)\ne 0\ \big\}.\]
In \cite{GL90} and \cite{GL91}, they show that these loci are translates of abelian subvarieties of ${\rm Pic }^0(A)$ of codimension $\geq i-(\dim X-\dim a(X))$. In particuar if $X$ is of maximal Albanese dimension so that $\dim X=\dim a(X)$, then ${\rm codim}V^i(\omega _X)\geq i$.
They also conjecture the following more functorial version of this result  (see \S \ref{S-P} for the definition of the Grothendieck dual $D_A$ and the Fourier Mukai transform $R\hat S$):
\begin{equation}
\label{eq:intro_functorial}
\parbox{330pt}{ if $X$ is of maximal Albanese dimension, then $R\hat S (D_A(Ra_*\omega _X))$ is a sheaf, or equivalently $\mathcal H ^i\Big( R\hat S \big( D_A(R a_* \omega _X) \big) \Big)=0$ for $i\ne 0$.}
\end{equation}
In \cite[Theorem 1.2.3]{Hacon04} a stronger version of conjecture \eqref{eq:intro_functorial} was proven. Namely it is shown that if $F$ is a coherent sheaf on $A$ and $L$ is an ample line bundle on $\hat A$ such that $H^i\big(A,F\otimes \hat L ^\vee \big)=0$ for any $i>0$ and any sufficiently ample line bundle $L$, then  $R\hat S \big( D_A(F) \big)$ is a sheaf (see \S \ref{S-P} for the definition of $\hat L ^\vee$).
Note that by \cite{Kollar86II}, \[Ra_*\omega _X=\sum _{j=0}^{\dim X-\dim a(X)}R^ja_*\omega _X[-j]\] and by \cite{Kollar86I}, \[H^i\big(A,R^ja_*\omega _X\otimes \hat L ^\vee \big)=0,\qquad {\rm for }\ i>0.\]
In particular this implies \eqref{eq:intro_functorial}, which in turn implies  $\codim \mathcal H ^{j}\big(R\hat S(Ra_* \omega _X)\big) \geq j - (\dim X - \dim a(X)$. The original generic vanishing results then follow easily.

\subsubsection{Positive characteristic background}
In characteristic $p>0$, it is known that the results of  \cite{Kollar86I} and \cite{Kollar86II} fail frequently. For example, there exist smooth surfaces such that $H^1(\omega _X\otimes A)\ne 0$ for an ample line bundle $A$ and there are examples where the sheaves $R^ia_*\omega _X$ are torsion (even when $a$ is surjective). Thus, traditional generic vanishing results must fail as shown in \cite{HK15}. On the other hand it is well known that one can recover some weaker vanishing results by combining Serre Vanishing and the Frobenius morphism.
For example, note that if $f:X\to Y$ is a projective morphism of smooth projective varieties then $R^if_* \omega _X$ is a \emph{Cartier module}. By definition this means that   we have an $\sO_X$-module homomorphism $F_*R^if_*\omega _X\to R^if_*\omega _X$, which in this case is given by applying $R^i f_* (\_)$ to the Grothendieck trace of the  Frobenius morphism of $X$. Let $\Omega _e:=F^e_*R^if_*\omega _X$ and  let $H$ be an ample line bundle on $A$. Then, by the projection formula and Serre vanishing we have \[H^j(A,\Omega _e\otimes H)\cong H^j\Big( A,F^e_*\big(R^if_*\omega _X\otimes H^{p^e}\big)\Big)=0\qquad j>0.\]
Since the inverse system $\Omega _e$ satisfies the Mittag-Leffler property, we have $H^j(\Omega \otimes A)=0$ for $j>0$ where $\Omega :=\varprojlim \Omega _e$.
Note that in fact this works for any Cartier module $\Omega _1:=F^s_*\Omega _0\to \Omega _0$, and hence one can deduce that if $\Omega _0$ is a Cartier module on an abelian variety, then $H^i\big(\Omega \otimes \hat L ^\vee \big)=0$ for $i>0$ \cite[Lem 3.1.2]{HP16}. However, since  $\Omega$ is not a coherent sheaf it is unclear if it satisfies a generic vanishing statement. The situation is clarified in \cite[Thm 3.1.1]{HP16} where we show that if $\Lambda _e=R\hat S(D_A(\Omega _e))$ then $\Lambda :=\hocolim \Lambda _e$ is a sheaf and $\Omega =(-1_A)^*D_A(RS(\Lambda))[-g]$. Despite its technical nature, this statement proves to be very useful in several geometric contexts. Its applications are however somewhat limited in scope since it is not immediately clear  how to recover more traditional generic vanishing statements such as statements on the codimension of the loci $V^i(\omega _X)$ or more functorially on the codimension of the supports of the sheaves $\mathcal H ^i(R\hat S Ra_* \omega _X)$.  
In this paper we take a significant step in this direction. In particular we prove the following.

\subsection{Generic vanishing type results} \ul{From now for the entire article, the base field $k$ is algebraically closed and of characteristic $p>0$.} The Poincar\'e bundle on $A \times \hat A$ is denoted by $\sP$. In particular, $\sP |_{0\times \hat A}=\mathcal O _{\hat A}$ and if $y \in \hat A$ is a closed point then $\sP_y\cong \sP|_{A\times y}$ is the numerically trivial line bundle on $A$ corresponding to $y \in \hat A$.

\begin{theorem}
\label{thm:GV}
 Let $A$ be an abelian variety over $k$ and let $F^s_* \Omega _0 \to \Omega_0$ be a Cartier module on $A$. Then, for each integer $0 \leq i \leq \dim A$, there are closed subsets $ W^i \subseteq \hat A$  such that 
\begin{enumerate}
\item \label{itm:GV:codim} {\scshape codimension:} $\codim W^i \geq i$,
\item \label{itm:GV:zero} {\scshape vanishing:} there is an integer $t>0$ such that for every $i \geq 0$ and for every closed point $y\in \hat A \setminus W^i$, the following homomorphism is zero:
\begin{equation*}
H^i\left(A, \sP_y \otimes F^{t  s}_* \Omega_0\right) \to H^i(A,  \sP_y \otimes \Omega_0),
\end{equation*}
\item \label{itm:GV:limit} {\scshape non-vanishing:} if $Z$ is an irreducible component of $W^i$ of codimension $i$, then for very general closed points $y \in Z$, 
\begin{equation*}
0 \neq \varprojlim_e H^i(A , \sP_y \otimes F^{e  s}_* \Omega_0),
\end{equation*}
\item \label{itm:GV:supersingular_factor} {\scshape torsion property:} if $A$ does not have supersingular factors, then every irreducible component $Z$ of $W^i$ of codimension $i$ is a 
torsion translate of an abelian subvariety,

\item \label{itm:GV:limit_full} {\scshape full limit versions} (direct consequences of points \eqref{itm:GV:zero} and \eqref{itm:GV:limit}): 
\begin{equation*}
y \not\in \bigcup_{j \in \bN} \left[p^{js}\right]^{-1} W^i \textrm{ is a closed point} \quad \Longrightarrow \quad 0 = \varprojlim_e H^i(A , \sP_y \otimes F^{e  s}_* \Omega_0),
\end{equation*}
and if $Z$ is an irreducible component of $\left[p^{js}\right]^{-1} W^i $ of codimension $i$, then
\begin{equation*}
y \in Z \textrm{ a very general closed point} \quad \Longrightarrow \quad 0 \neq \varprojlim_e H^i(A , \sP_y \otimes F^{e  s}_* \Omega_0),
\end{equation*}
\item \label{itm:GV:depth} {\scshape connection to Fourier-Mukai transforms}: independently of $A$ having a supersingular factor  or not, we have:
\begin{equation*}
y \in W^i \setminus \left( \bigcup_{r >i} W^r \right) \quad \Longrightarrow \quad i = \codim y + \depth_{\sO_{\hat A, y}} \tilde \Lambda_{0,y},
\end{equation*}
where for every $ e \gg 0$:
\begin{equation*}
\tilde \Lambda_0:= \im \left( \sH^0\big(R\hat S \big(D_A(\Omega_0)\big) \big) \to \sH^0\big(R\hat S \big(D_A(F^{e  s}_*\Omega_0)\big)\big) \right)
\end{equation*}
and additionally 
\begin{equation*}
\Supp\tilde \Lambda_0 \subseteq \left\{\ \left. y \in \hat A \ \right| \ H^0(A, \sP_y \otimes \Omega_0 )  \neq 0 \ \right\}.
\end{equation*}
In particular, the abelian subvarieties of point \eqref{itm:GV:supersingular_factor} are exactly the closures of the associated primes of $\tilde \Lambda_0$. 
\end{enumerate}
\end{theorem}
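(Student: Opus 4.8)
The plan is to reduce the whole statement, via Grothendieck--Serre duality on $A$, to commutative algebra (Auslander--Buchsbaum, associated primes, local duality) on $\hat A$ applied to a single coherent sheaf extracted from \cite{HP16}.

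\textbf{Step 1 (the coherent approximation).} Write $g=\dim A$, $\Omega_e=F^{es}_*\Omega_0$, and $\Lambda_e=R\hat S(D_A(\Omega_e))$; the Cartier maps $\Omega_{e+1}\to\Omega_e$ dualise to a direct system $\Lambda_0\to\Lambda_1\to\cdots$ whose homotopy colimit $\Lambda=\hocolim_e\Lambda_e$ is, by \cite[Thm 3.1.1]{HP16}, a sheaf and satisfies $\varprojlim_e\Omega_e=(-1_A)^*D_A(RS(\Lambda))[-g]$. Since $\sH^q(\Lambda)=\varinjlim_e\sH^q(\Lambda_e)$ and $\hat A$ is Noetherian, for $e\gg0$ the maps $\sH^q(\Lambda_0)\to\sH^q(\Lambda_e)$ vanish for all $q\neq0$ and the image $\tilde\Lambda_0=\operatorname{im}\bigl(\sH^0(\Lambda_0)\to\sH^0(\Lambda_e)\bigr)$ stabilises; a short argument identifies this stable image with a coherent subsheaf of $\Lambda$ — the sheaf of \eqref{itm:GV:depth}. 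One should moreover show that $\Lambda$ is coherent and that $\tilde\Lambda_0$ already carries the low-codimension information of $\Lambda$, i.e. that, writing $\tilde\Lambda_a$ for the stable image of $\sH^0(\Lambda_a)$ and noting $\Lambda=\bigcup_a\tilde\Lambda_a$, the quotients $\tilde\Lambda_a/\tilde\Lambda_0$ only introduce associated primes of height strictly larger than those of $\tilde\Lambda_0$.

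\textbf{Step 2 (the dictionary and the loci $W^i$).} Cohomology and base change give $\Lambda_e\otimes^{L}k(y)\simeq R\Gamma(A,\sP_y\otimes D_A(\Omega_e))$, and Serre duality on $A$ (with $\omega_A\cong\sO_A$) identifies the right-hand side with $R\Gamma(A,\sP_{-y}\otimes\Omega_e)^{\vee}$; hence $\sH^{-i}(\Lambda_e\otimes^{L}k(-y))\cong H^i(A,\sP_y\otimes\Omega_e)^{\vee}$, and after the filtered colimit over $e$, $\operatorname{Tor}^{\sO_{\hat A}}_i(\Lambda,k(-y))\cong\bigl(\varprojlim_e H^i(A,\sP_y\otimes\Omega_e)\bigr)^{\vee}$. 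I would then \emph{define} $W^i:=\{y\in\hat A:\operatorname{Tor}^{\sO_{\hat A}}_i(\tilde\Lambda_0,k(-y))\neq0\}$, which equals the image under inversion of $\{z:\operatorname{pd}_{\sO_{\hat A,z}}\tilde\Lambda_{0,z}\geq i\}$ and is closed by upper semicontinuity of Betti numbers. Part \eqref{itm:GV:codim} is immediate: over the regular local ring $\sO_{\hat A,z}$ one has $\operatorname{pd}\leq\dim$, so $\operatorname{pd}_{\sO_{\hat A,z}}\tilde\Lambda_{0,z}\geq i$ forces $\dim\sO_{\hat A,z}\geq i$, i.e. $\codim W^i\geq i$.

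\textbf{Step 3 (the remaining items).} For \eqref{itm:GV:zero}: in the hyper-$\operatorname{Tor}$ spectral sequences of $\Lambda_0\otimes^{L}k(-y)$ and $\Lambda_t\otimes^{L}k(-y)$ the map induced on the rows with $q\neq0$ vanishes once $t\gg0$ (Step 1), and on the $q=0$ row it factors through $\operatorname{Tor}_\bullet(\tilde\Lambda_0,k(-y))$, whose degree-$i$ part is $0$ when $y\notin W^i$; since only finitely many rows occur, one $t$ works for all $i$, so the map on the abutment $\sH^{-i}$ — equivalently, after dualising, $H^i(A,\sP_y\otimes F^{ts}_*\Omega_0)\to H^i(A,\sP_y\otimes\Omega_0)$ — is zero. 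For \eqref{itm:GV:limit}: at the generic point $\xi$ of a codimension-$i$ component of $W^i$ we have $\operatorname{pd}_{\sO_{\hat A,-\xi}}\tilde\Lambda_{0,-\xi}=i=\dim\sO_{\hat A,-\xi}$, hence $\depth\tilde\Lambda_{0,-\xi}=0$, so $-\xi$ is an associated prime of $\tilde\Lambda_0$ and $\operatorname{Tor}_i(\tilde\Lambda_0,k(-\xi))\neq0$; as $\operatorname{pd}(\Lambda/\tilde\Lambda_0)_{-\xi}\leq i$, the long exact sequence gives $\operatorname{Tor}_i(\tilde\Lambda_0,k(-\xi))\hookrightarrow\operatorname{Tor}_i(\Lambda,k(-\xi))$, which is therefore nonzero, and coherence of $\Lambda$ spreads this to very general closed points of the component, yielding $\varprojlim_e H^i(A,\sP_y\otimes\Omega_e)\neq0$. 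For \eqref{itm:GV:supersingular_factor}: the Cartier structure makes $\Supp\tilde\Lambda_0$, and its associated-prime decomposition, stable up to finite ambiguity under pullback by $[p^s]$, and such closed subsets of an abelian variety with no supersingular factor are finite unions of torsion translates of abelian subvarieties (the structure result underlying \cite{HP16}). Part \eqref{itm:GV:limit_full} is \eqref{itm:GV:zero} and \eqref{itm:GV:limit} applied with $\Omega_0$ replaced by $F^{es}_*\Omega_0$, since that Frobenius twist replaces $W^i$ by $[p^{es}]^{-1}W^i$. Finally \eqref{itm:GV:depth} is Auslander--Buchsbaum/local duality over $\sO_{\hat A,y}$ reading $\operatorname{pd}_{\sO_{\hat A,-y}}\tilde\Lambda_{0,-y}=i$ (i.e. $y\in W^i\setminus\bigcup_{r>i}W^r$) as the stated depth identity, together with cohomology and base change for $\Supp\tilde\Lambda_0\subseteq\{y:H^0(A,\sP_y\otimes\Omega_0)\neq0\}$ and the associated-prime identification from \eqref{itm:GV:limit}.

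\textbf{Main obstacle.} The technical heart is Step 1: proving that the a priori infinite object $\Lambda=\hocolim\Lambda_e$ is coherent and that the coherent piece $\tilde\Lambda_0$ detects exactly the small-codimension (associated-prime) content of $\Lambda$ governing the cohomology limits, so that \eqref{itm:GV:codim}, \eqref{itm:GV:limit} and \eqref{itm:GV:depth} hold simultaneously; a secondary difficulty is the Frobenius-stability structure theorem for subsets of abelian varieties needed for \eqref{itm:GV:supersingular_factor}.
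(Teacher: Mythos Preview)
Your overall strategy—extract a single coherent sheaf $\tilde\Lambda_0$ on $\hat A$ and read off everything from its homological invariants over the regular local rings $\sO_{\hat A,y}$—is exactly the paper's. Your $W^i$ (via $\operatorname{Tor}$/projective dimension) coincides with the paper's (via $\operatorname{Ext}$/depth), since over a regular local ring $\operatorname{pd}=\max\{i:\operatorname{Tor}_i(-,k)\neq0\}=\max\{i:\operatorname{Ext}^i(-,R)\neq0\}$ and Auslander--Buchsbaum converts one language to the other. Parts \eqref{itm:GV:codim}, \eqref{itm:GV:supersingular_factor}, \eqref{itm:GV:limit_full}, \eqref{itm:GV:depth} go through essentially as you say.

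The genuine gap is the one you yourself flag: you need $\Lambda=\hocolim\Lambda_e$ to be coherent (or at least to behave like a coherent sheaf for spreading $\operatorname{Tor}$-nonvanishing from a generic point to very general closed points in \eqref{itm:GV:limit}), and you need the quotients $\tilde\Lambda_a/\tilde\Lambda_0$ to contribute no associated primes of small height. Neither is true in general. Since $\tilde\Lambda_e\cong V^{es,*}\tilde\Lambda_0$, the associated primes of $\tilde\Lambda_e$ are the $[p^{es}]$-preimages of those of $\tilde\Lambda_0$ and have the \emph{same} codimension; so $\Lambda=\bigcup_e\tilde\Lambda_e$ is typically not coherent and the quotients $\tilde\Lambda_e/\tilde\Lambda_0$ do acquire new associated primes of the same height. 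Your injection $\operatorname{Tor}_i(\tilde\Lambda_0,k(-\xi))\hookrightarrow\operatorname{Tor}_i(\Lambda,k(-\xi))$ and the subsequent spreading argument therefore fail as written.

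The paper does \emph{not} prove $\Lambda$ coherent. Instead it introduces the back-transforms $\tilde\Omega_e:=RS(D_{\hat A}(\tilde\Lambda_e))$ and proves a derived-category zigzag (Proposition~\ref{prop:inverse_systems_map}, Lemma~\ref{lem:direct_systems_isomorphic}): for a suitable cofinal subsequence there are factorizations $\Omega_{e_{i+1}}\to\tilde\Omega_{e_i}\to\Omega_{e_i}$ in $D^b_c(A)$, so the inverse systems $(\Omega_e)$ and $(\tilde\Omega_e)$ have identical limits. Since each $\tilde\Omega_e$ is a bounded complex of coherent sheaves and $R\hat S(\tilde\Omega_e)\cong(-1_{\hat A})^*R\sH om(\tilde\Lambda_e,\sO_{\hat A})$, local duality gives the nonvanishing $\varprojlim_r R^j\hat S(\tilde\Omega_r)\otimes k(\eta)\neq0$ at the generic point (Lemma~\ref{lem:main}\eqref{itm:main:limit}), and ordinary cohomology-and-base-change for the \emph{coherent} $\tilde\Omega_e$ spreads this to very general closed points; the zigzag then transfers the conclusion back to $(\Omega_e)$. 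The same zigzag also gives \eqref{itm:GV:zero} directly: the factorization $\Omega_t\to\tilde\Omega_0\to\Omega_0$ makes the map on $H^i$ factor through $H^i(A,\sP_y\otimes\tilde\Omega_0)$, which vanishes for $y\notin W^i$. So the missing idea in your proposal is precisely this: replace the hoped-for coherence of $\Lambda$ by the zigzag interchangeability of $(\Omega_e)$ and $(\tilde\Omega_e)$.
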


Surprisingly, one cannot remove the non-existence of supersingular factors from point \eqref{itm:GV:supersingular_factor} of Theorem \ref{thm:GV}. This is stated in the next proposition, by chosing $y$ of the proposition to be a non-torsion point.

\begin{proposition} (= Example \ref{ex:supersingular_basic})
\label{prop:supersingular_basic}
Point \eqref{itm:GV:supersingular_factor} of Theorem \ref{thm:GV} does not hold if $A$ is supersingular. That is, for each closed point $y \in Y$ of a supersingular abelian variety, there is a Cartier module $F_* \Omega_0 \to \Omega_0$ such that for very general closed point $z \in \hat A$ we have $H^g ( A, \sP_z \otimes \Omega_e)=0$ for every integer $e>0$, and 
 \begin{equation*}
0 \neq \varprojlim_e H^g ( A, \sP_{y} \otimes \Omega_e).
 \end{equation*}
\end{proposition}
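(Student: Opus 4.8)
The plan is to prescribe the Fourier--Mukai/Grothendieck-dual incarnation of the sought-for Cartier module on $\hat A$ and transport it back through the inversion formula $\Omega = (-1_A)^* D_A(RS(\Lambda))[-g]$ of \cite{HP16}, exploiting the following mechanism: on a supersingular abelian variety the isogeny $\psi$ of $\hat A$ dual to the Frobenius of $A$ is purely inseparable, hence a universal homeomorphism. This is exactly what makes point \eqref{itm:GV:supersingular_factor} fail --- for such a $\psi$ a set-theoretically $\psi$-stable closed subset need not be a torsion translate, and a non-torsion point can have proper $\psi$-orbit closure. Throughout I use the projection formula identity $H^i(A, \sP_w \otimes F^{e s}_*\Omega_0) \cong H^i(A, \sP_{\psi^e(w)} \otimes \Omega_0)$ and the base-change identity $H^g(A, \sP_w \otimes \Omega_0) \cong \sH^g\big(R\hat S(\Omega_0)\big) \otimes k(w)$.

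First I would reduce to the case where $y$ lies on a supersingular elliptic curve $E \subseteq \hat A$. Since every supersingular abelian variety is isogenous to a product of supersingular elliptic curves, after replacing $A$ by an isogenous abelian variety --- under which Cartier modules and the quantities $H^i(A, \sP_\bullet \otimes F^{e s}_*\Omega_0)$ and the loci $W^i$ transform controllably, because the dual isogeny on $\hat A$ is again purely inseparable --- one may assume $\hat A$ contains a supersingular elliptic subvariety $E$ with $y \in E$. The forward orbit $\{\psi^e(y)\}_e$ then stays inside $E$, and for $y$ non-torsion it is Zariski dense in $E$, so $\overline{\{\psi^e(y)\}} = E$ is a \emph{proper} closed subset of $\hat A$.

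Next I would construct $\Omega_0$ so that $\sH^g\big(R\hat S(\Omega_0)\big)$ is a sheaf supported exactly on $E$, with $y$ among its associated primes. Concretely one can take $\Omega_0$ to be the Grothendieck dual of $(\pr_A)_*\big(\sP|_{A\times E} \otimes \pr_E^{*}L\big)$ for a symmetric ample line bundle $L$ on $E$, suitably modified along the fibre over $y$; using relative duality for $\pr_A$ and the seesaw principle one checks that $H^g(A, \sP_w \otimes \Omega_0) \neq 0$ precisely for $w \in E$, and that $\Omega_0$ is a higher direct image of a relative dualizing sheaf, hence inherits a Cartier module structure $F^{s}_*\Omega_0 \to \Omega_0$. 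Dually, the sheaf $\tilde\Lambda_0$ of point \eqref{itm:GV:depth} has $\Supp \tilde\Lambda_0 \subseteq E$ and $y$ as an associated prime, so $\depth_{\sO_{\hat A,y}} \tilde\Lambda_{0,y} = 0$ and $\codim y = g$; thus $W^g$ for this Cartier module contains the non-torsion point $y$, contradicting point \eqref{itm:GV:supersingular_factor}. For the vanishing at a general $z$: since $\Supp \sH^g(R\hat S \Omega_0) = E$ is proper and $\psi$ is a homeomorphism, the countable union $\bigcup_{e \geq 1} \psi^{-e}(E)$ is a countable union of proper closed subsets, so for $z$ outside it --- i.e. for very general $z$ --- one gets $H^g(A, \sP_z \otimes F^{e s}_*\Omega_0) \cong \sH^g(R\hat S\Omega_0) \otimes k(\psi^e z) = 0$ for every $e > 0$. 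For the non-vanishing at $y$: every $H^g(A, \sP_y \otimes F^{e s}_*\Omega_0) \cong \sH^g(R\hat S\Omega_0) \otimes k(\psi^e y) \neq 0$ because $\psi^e(y) \in E$ and $y$ is an associated point, and then one shows the transition maps are eventually nonzero, so $\varprojlim_e H^g(A, \sP_y \otimes F^{e s}_*\Omega_0) \neq 0$.

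The main obstacle is this last point: one must rule out that the transition maps in the inverse system become eventually zero. A single vanishing transition does not kill the inverse limit, but a cofinal string of them does, and in the ordinary case this collapse is exactly what forces the support of $W^g$ to be torsion. The way around it is to build $\Omega_0$ ``purely'' (as a higher direct image of a dualizing sheaf, with no Cartier-unstable summand that would route the Cartier operator through a part of the sheaf with vanishing $H^g$ at $\psi^e(y)$), and then to use that $\psi$ is purely inseparable on the supersingular abelian variety $\hat A$ --- equivalently, that $E$ and every $\psi$-preimage of a point of $E$ stay inside $E$ --- to check that the relevant identifications at each level of the system are compatible, so that the inverse system is eventually essentially the Cartier-module system of the local ring of $\sH^g(R\hat S\Omega_0)$ at $y$, which has nonzero limit. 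A secondary technical point is verifying that the explicitly constructed $\Omega_0$ genuinely is a Cartier module realizing the prescribed $\tilde\Lambda_0$; this amounts to matching the explicit sheaf on $\hat A$ with the $\tilde\Lambda_0$-formalism and inversion formula of \cite{HP16}.
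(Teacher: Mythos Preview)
Your proposal identifies the right mechanism --- on a supersingular abelian variety the isogeny $\psi=V_{\hat A}$ dual to Frobenius is purely inseparable, hence a universal homeomorphism --- but then overcomplicates the construction and leaves the decisive step unjustified. The paper's argument uses the $V$-module machinery of Section~\ref{sec:V_modules_examples} with the simplest possible input: $\sM = k(y)$. Because $V$ is purely inseparable on $\hat A$, the pullback $V^*k(y)=\sO_{\hat A}/m_{\hat A,y}^{[p^g]}$ is a genuine fat point, and any socle element gives an injection $k(y)\hookrightarrow V^*k(y)$, i.e.\ an injective $V$-module structure on $k(y)$. The resulting Cartier module $\Omega_0 = \sH^0\big(RS(D_{\hat A}(k(y)))\big)$ is simply the line bundle $\sP_y$; one checks directly that $\{z : H^g(A,\sP_z\otimes\Omega_0)\neq 0\} = \{-y\}$, so $W^g=\{-y\}$, and the nonvanishing of $\varprojlim_e H^g(A,\sP_{-y}\otimes\Omega_e)$ follows from the already-proven point \eqref{itm:GV:limit} of Theorem~\ref{thm:GV}. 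No elliptic curve, no isogeny reduction, no separate analysis of transition maps.

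Your route has genuine gaps. First, since $\psi$ is the identity on closed points, the forward orbit of any $y$ is just $\{y\}$; your claim that for non-torsion $y$ it is Zariski-dense in $E$ is incorrect, and the isogeny reduction --- whose effect on Cartier modules and on $W^i$ you describe only as ``controllable'' --- is superfluous. Second, your construction of $\Omega_0$ is not a construction: ``suitably modified along the fibre over $y$'' does not specify a sheaf, and asserting that $\Omega_0$ is a higher direct image of a relative dualizing sheaf neither determines the Cartier structure nor shows that the resulting $\tilde\Lambda_0$ has $y$ (rather than the generic point of $E$) as an associated prime. Third --- and this is the crux --- you correctly flag nonvanishing of the transition maps as the main obstacle, but ``eventually essentially the Cartier-module system of the local ring'' is not an argument; it is precisely what must be shown. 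The paper bypasses this entirely: once $\{-y\}$ is exhibited as a codimension-$g$ component of $W^g$, point \eqref{itm:GV:limit} of Theorem~\ref{thm:GV} delivers the nonvanishing of the limit directly.
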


\begin{remark}
An admitted aesthetic defect of Theorem \ref{thm:GV} is that the non-vanishing of $\varprojlim_e H^i(A , \sP_y \otimes F^{e  s}_* \Omega_0)$ does not happen for all points  $y \in W^i$, just for very general points of the components of $W^i$ of codimension $i$. Because of the presence of (infinitely many) homomorphisms in these inverse limits, there is not much hope to remove completely the genericity condition from the statement, although it would be interesting to see whether a statement with very general replaced by general can be found. This would go beyond the scope of the present article, so we leave it as an open question.

It is also unclear at the moment whether our statement could be improved to a non-vanishing (at (very) general points of) all components of $W^i$. Let us explain below the reason why this non-vanishing happens only at generic points of components of codimension $i$ in Theorem \ref{thm:GV}. Intuitively, $\varprojlim_e H^i(A , \sP_y \otimes F^{e  s}_* \Omega_0)$ could be zero for two reasons:
\begin{enumerate}
\item \label{itm:as_in_char_zero} the  cohomology group corresponding to $H^i(A , \sP_y \otimes \Omega_0)$ is vanishing in characteristic zero,
\item \label{itm:nilpotent_by_chance} the corresponding cohomology group is not vanishing in characteristic zero, but the action on cohomology induced by $F^s_* \Omega_0 \to \Omega_0$ is still nilpotent because of some particular arithmetic behavior.  
\end{enumerate}
Our approach is based on an inverse system of complexes, which we denote by $ \big(R \hat S \big( \tilde \Omega_e \big) \big)_{e \geq 0}$, and in fact, $W^i$ of Theorem \ref{thm:GV} is defined as $\bigcup_{j \geq i} \Supp R^j \hat S \big( \tilde \Omega_e \big)$. We refer to  Section \ref{sec:Frob_stab_coho_support} for the details, here we focus on the important point that these complexes are able to distinguish points $ y \in \hat A$ of type \eqref{itm:as_in_char_zero} from the rest, but not points of type \eqref{itm:nilpotent_by_chance}. To repair the above defect, one should find an inverse system $(\sC_e)_{e \geq 0}$ of bounded complexes of coherent sheaves, which is equivalent to  $ \big(R \hat S \big( \tilde \Omega_e \big) \big)_{e \geq 0}$ in the sense of Lemma \ref{lem:direct_systems_isomorphic}, and for which the complexes $\sC_e$ can somehow also tell apart points of type \eqref{itm:nilpotent_by_chance} from the rest of the points of $\hat A$. 
Unfortunately, we were not able to find such an inverse system, and hence we leave it also as open question to  decide whether one could strengthen our non-vanishing result beyond the components of codimension $i$. 

\end{remark}

Another bproduct of our methods is the following non-vanishing statement:

\begin{theorem}
\label{thm:non_vanishing}
Let $F^s_* \Omega_0 \to \Omega_0$ be a Cartier module on an abelian variety $A$ without supersingular factors, and let $\hat W \subseteq \hat A$ be an abelian subvariety such that the generic point of a translate of $\hat W$ is an associated prime of $\im \left( R \hat S \big( D_A(\Omega_0) \big) \to R \hat S \big(  D_A(F^{e  s}_* \Omega_0) \big) \right)$ for every $ e \gg 0$ (such $\hat W$ exists for all associated points according to Corollary \ref{cor:torsion_translate_of_ab_var}).  Let $\pi : A \to W$ be the  morphism of abelian varieties dual to the inclusion $\hat W \hookrightarrow \hat A$, and let $j$ be the relative dimension of $\pi$. Then,  $R^j \pi_*( \Omega_0 \otimes Q) \neq 0$ for some $Q \in \hat A$.
\end{theorem}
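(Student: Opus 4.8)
The plan is to transport the associated-prime hypothesis through the Fourier--Mukai transform along $\pi$ and its dual $\hat\pi\colon\hat W\hookrightarrow\hat A$, and to read it off as a nonvanishing of the top direct image $R^{j}\pi_{*}$. Put $K:=(\ker\pi)^{\circ}$, so that $\dim K=j$, and note $\omega_{A/W}\cong\mathcal O_{A}$ because $\omega_{A}$ and $\omega_{W}$ are trivial. Write the translate of $\hat W$ whose generic point is the prescribed associated prime as $\hat W+\hat w_{0}$. Since $R\hat S_{A}$ intertwines twisting by $\sP_{\hat w_{0}}$ with translation by $\pm\hat w_{0}$, and $D_{A}(\mathcal F\otimes\sP)\cong D_{A}(\mathcal F)\otimes\sP^{-1}$, after choosing a suitable $Q_{0}\in\hat A$ and replacing each $\Omega_{e}$ by $\Omega_{e}\otimes Q_{0}$ (which translates the whole picture) we may assume that the generic point $\eta$ of $\hat W$ itself is an associated prime of $R\hat S_{A}\big(D_{A}(\Omega_{e}\otimes Q_{0})\big)$ for every $e\gg 0$; this follows from the hypothesis, since the cohomology sheaves of the image $\im\big(R\hat S_{A}(D_{A}(\Omega_{0}\otimes Q_{0}))\to R\hat S_{A}(D_{A}(\Omega_{e}\otimes Q_{0}))\big)$ are subsheaves of those of $R\hat S_{A}(D_{A}(\Omega_{e}\otimes Q_{0}))$. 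We keep the twist $Q_{0}$ explicit; there is no harm in $\Omega_{e}\otimes Q_{0}$ no longer carrying a Cartier module structure.

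Next I would use two compatibilities. Relative Grothendieck duality gives $R\pi_{*}\circ D_{A}\cong D_{W}\circ R\pi_{*}$, with no correction term since $\omega_{A/W}\cong\mathcal O_{A}$. The Fourier--Mukai transform is compatible with $\pi$ in the form $R\hat S_{W}\circ R\pi_{*}\cong L\hat\pi^{*}\circ R\hat S_{A}$ (up to a shift, checked by testing on $\mathcal O_{A}$, and an immaterial $(-1)^{*}$). Composing these,
\[
L\hat\pi^{*}\Big(R\hat S_{A}\big(D_{A}(\Omega_{e}\otimes Q_{0})\big)\Big)\ \cong\ R\hat S_{W}\Big(D_{W}\big(R\pi_{*}(\Omega_{e}\otimes Q_{0})\big)\Big)
\]
up to shift, for every $e$.

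Now I would read off the associated prime. Since $\hat\pi$ is a regular closed immersion of codimension $j$, at $\eta$ the local ring $\mathcal O_{\hat A,\eta}$ is regular of dimension $j$ and $L\hat\pi^{*}$ amounts to $(-)\otimes^{\mathbf L}_{\mathcal O_{\hat A,\eta}}k(\eta)$. The hypothesis says the relevant cohomology sheaf of $R\hat S_{A}(D_{A}(\Omega_{e}\otimes Q_{0}))$ has the maximal ideal of $\mathcal O_{\hat A,\eta}$ among its associated primes; such a module has depth $0$, hence (Auslander--Buchsbaum) projective dimension $j$ over $\mathcal O_{\hat A,\eta}$, so its $\operatorname{Tor}^{\mathcal O_{\hat A,\eta}}_{j}$ against $k(\eta)$ is nonzero. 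Pushing this through the isomorphism above, $R\hat S_{W}\big(D_{W}(R\pi_{*}(\Omega_{e}\otimes Q_{0}))\big)$ is nonzero at $\eta$, in the cohomological degree that — after inverting the equivalences $R\hat S_{W}$ and $D_{W}$ — corresponds to the top nonzero direct image; hence $R^{j}\pi_{*}(\Omega_{e}\otimes Q_{0})\neq 0$ for every $e\gg 0$. Finally, $\Omega_{e}\otimes Q_{0}=F^{es}_{A*}\Omega_{0}\otimes Q_{0}=F^{es}_{A*}\big(\Omega_{0}\otimes Q_{0}^{p^{es}}\big)$ and $\pi\circ F^{es}_{A}=F^{es}_{W}\circ\pi$, so $R^{j}\pi_{*}(\Omega_{e}\otimes Q_{0})=F^{es}_{W*}R^{j}\pi_{*}\big(\Omega_{0}\otimes Q_{0}^{p^{es}}\big)$; as $F^{es}_{W*}$ is faithful this yields $R^{j}\pi_{*}(\Omega_{0}\otimes Q)\neq 0$ with $Q:=Q_{0}^{p^{es}}\in\hat A$, which is what was wanted.

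The step I expect to be the main obstacle is the cohomological bookkeeping in the third paragraph. The complexes $R\hat S_{A}(D_{A}(-))$ can carry several nonzero cohomology sheaves, and $L\hat\pi^{*}$ is not exact, so one must check both (i) that the associated prime at $\eta$ lies in the single expected cohomological degree — this is exactly the depth-zero, pure-codimension-$j$ content of point~\eqref{itm:GV:depth} of Theorem~\ref{thm:GV}, reflecting the fact that the stabilized image is a sheaf placed in degree $0$, as in the characteristic-zero generic vanishing theorem — and (ii) that, under $R\hat S_{W}$ and $D_{W}$, this degree matches $R^{j}\pi_{*}$ and not a lower direct image, which requires tracking codimensions and shifts through both equivalences. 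The rest — the two compatibility isomorphisms, the commutative-algebra input, and the Frobenius untwisting — is formal.
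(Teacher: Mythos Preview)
Your overall architecture—translate so that $\hat W$ itself carries the associated prime, then exploit the compatibility $L\hat\pi^{*}\circ R\hat S_{A}\cong R\hat S_{W}\circ R\pi_{*}$ together with Grothendieck duality—is close in spirit to the paper's argument. The gap is that you try to run everything at a single index $e$ (or for each $e\gg 0$ separately), and this is precisely where your ``cohomological bookkeeping'' fails; the paper's use of the whole direct system is not cosmetic.

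There are two linked problems. On the left of your displayed isomorphism, $\Lambda_{e}=R\hat S_{A}\bigl(D_{A}(\Omega_{e}\otimes Q_{0})\bigr)$ is a complex, not a sheaf: only $\hocolim_{e}\Lambda_{e}$ is concentrated in degree $0$ \cite[Cor.~3.1.4]{HP16}, and for fixed $e$ the sheaves $\sH^{<0}(\Lambda_{e})$ need not vanish. Your Auslander--Buchsbaum step computes $\operatorname{Tor}_{j}^{\sO_{\hat A,\eta}}\bigl(\sH^{0}(\Lambda_{e})_{\eta},k(\eta)\bigr)\neq 0$, i.e.\ the $E_{2}^{-j,0}$-term of the hyper-Tor spectral sequence for $L\hat\pi^{*}\Lambda_{e}$. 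That term has no incoming differentials (the local ring has projective dimension $j$), but it does have outgoing differentials $d_{r}\colon E_{r}^{-j,0}\to E_{r}^{-j+r,1-r}$ whose targets involve $\sH^{1-r}(\Lambda_{e})$ for $r\geq 2$; there is no reason these should be zero, so $\sH^{-j}(L\hat\pi^{*}\Lambda_{e})_{\eta}\neq 0$ does not follow. Your appeal to point~\eqref{itm:GV:depth} of Theorem~\ref{thm:GV} does not help: that is a statement about the stabilized sheaf $\tilde\Lambda_{0}$, and the bridge between $\tilde\Lambda_{0}$ and $\Lambda_{e}$ is Lemma~\ref{lem:direct_systems_isomorphic}, which is intrinsically a statement about the system, not about any fixed $e$.

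On the right, the same obstruction recurs in the step ``invert the equivalences''. Even assuming $R^{j}\pi_{*}(\Omega_{e}\otimes Q_{0})=0$, the complex $R\hat S_{W}\bigl(D_{W}(R\pi_{*}(\Omega_{e}\otimes Q_{0}))\bigr)$ can still have nonzero $\sH^{-j}$, because $R\hat S_{W}D_{W}$ applied to each lower $R^{m}\pi_{*}$ ($m<j$) is not concentrated in a single degree. The paper's Claim~\ref{claim:zero} makes exactly this point: under the vanishing hypothesis one only obtains that the \emph{direct system} $\bigl(R\hat S_{W}D_{W}(R\pi_{*}\Omega'_{e})\bigr)_{e}$ is nilpotent in degree $-j$, by invoking \cite[Thm.~3.1.1]{HP16} for each Cartier module $R^{m}\pi_{*}\Omega'_{0}$; the individual terms need not vanish. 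The paper then closes by contradicting the non-vanishing of $\varprojlim_{e}H^{j}\bigl(A\times\eta,\sP_{\eta}\otimes\pr_{\eta}^{*}\tilde\Omega'_{e}\bigr)$ from Lemma~\ref{lem:main}\eqref{itm:main:limit} together with Corollary~\ref{cor:coho_and_base-change}, using Lemma~\ref{lem:direct_systems_isomorphic} to pass between $\Omega'_{e}$ and $\tilde\Omega'_{e}$. Both halves of your obstacle are absorbed by this system-level nilpotence argument; neither can be eliminated at a fixed $e$.
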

\begin{remark} It is interesting to note that in characteristic 0, if $\Omega _0$ is a direct summand of the pushforward of the canonical bundle of a smooth variety (or more generally the push forward of a sheaf of the form $\OO _X(D)$ where $D$ is a Weil divisor numerically equivalent to $K_X+B$ for some klt pair $(X,B)$), and if $Z$ is the support of $\Omega _0$, then $R^j\pi_*( \Omega_0 \otimes Q)$ is a torsion free sheaf supported on $\pi (Z)$. It then follows that  $\dim Z _w\geq j$ for every closed point $w\in \pi (Z)$ and hence $Z=\pi ^{-1}\pi (Z)$ i.e. $Z$ is fibred by tori cf. \cite[Theorem 1]{GL90} and \cite[Theorem 3]{EL97}. In order to make further progress in understanding the geometry of varieties in positive characteristics it would be useful to answer the question below.
\end{remark}

\begin{question} Let $f:X\to Y$ be a morphism of smooth projective varieties over an algebraically closed field of characteristic $p>0$. Is it the case  in some sense that:
\begin{enumerate}
\item \label{itm:GR:torsion_free} $ \varprojlim F^e_* R^if_* \omega _X$ is torsion free on $f(Y)$ and in particular
$ \varprojlim F^e_* R^if_* \omega _X=0$ for $i>\dim X-\dim f(X)$, and
\item \label{itm:GR:decomposition} $\holim Rf_*F^e_* \omega _X=\sum \varprojlim F^e_* R^if_* \omega _X[-i]$? 
\end{enumerate}
The words ``some sense''  are added because the ideal framework for the above questions is not entirely clear at this point. For example in case of   \eqref{itm:GR:decomposition}, one could also ask if the equality holds in some category of derived Cartier modules localized by nilpotence. 

\end{question}

\subsection{Geometry of pluri-theta divisors}
By \cite{EL97}, it is known that if $(A,\Theta )$ is a principally polarized abelian variety (PPAV) over $\mathbb C$ the field of complex numbers such that $\Theta$ is irreducible, then $\Theta $ is normal and has rational singularities. In fact $(A,\Theta )$ is plt or equivalently  $\Theta$ has canonical singularities (since it is Gorenstein). We prove an analog of this result in positive characteristic. We recall: \ul{for the entire article, the base field $k$ is algebraically closed and of characteristic $p>0$.} 

\begin{theorem}[Watson]\label{t-main}
Let $(A,X)$ be a PPAV  over $k$ such that $X$ is irreducible and $A$ has no supersingular factors, then $X$ is strongly F-regular.\end{theorem}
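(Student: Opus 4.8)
Set $i\colon X\hookrightarrow A$ and $L:=\sO_A(X)$, a principal polarization, so $\chi(L)=h^0(L)=1$; since $\chi(M^{\otimes m})=m^{g}\chi(M)$ for ample $M$ on $A$, an irreducible $X$ is automatically a reduced, irreducible divisor, in particular Cohen--Macaulay, and by adjunction $\omega_X\cong L|_X$. Strong F-regularity of $X$ will in particular yield its normality, so I will not assume normality at the outset, working instead with Cartier modules on $A$ (for which one only needs $X$ reduced and the Grothendieck trace of Frobenius available). The unique section cutting out $X$ gives $0\to\sO_A\to L\to i_*\omega_X\to 0$, and the trace makes $i_*\omega_X$ --- and, more generally, for an effective Cartier divisor $D$ on $X$, the sheaf $i_*(\omega_X\otimes\sO_X(D))$ --- a Cartier module on $A$.

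The plan is to argue by contradiction, in the spirit of \cite{EL97}. Suppose $X$ is not strongly F-regular, i.e.\ the test submodule $\tau(\omega_X)\subseteq\omega_X$ is a proper submodule. Then $\sC_0:=i_*\big(\omega_X/\tau(\omega_X)\big)$ is a nonzero Cartier module on $A$ supported on a proper closed subset $\Sigma\subsetneq X$. I would feed $\sC_0$ into Theorem~\ref{thm:GV}: since $A$ has \emph{no supersingular factors}, parts~\eqref{itm:GV:supersingular_factor} and~\eqref{itm:GV:depth} show that the closure of every associated prime of the stable Fourier--Mukai image $\tilde\Lambda_0$ attached to $\sC_0$ is a torsion translate of an abelian subvariety of $\hat A$. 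The heart of the matter is then to show that \emph{at least one} of these is a \emph{proper} abelian subvariety $\hat W_0\subsetneq\hat A$. Granting this, let $\pi\colon A\to V$ be the quotient of abelian varieties dual to the inclusion $\hat W_0\hookrightarrow\hat A$, with kernel $B:=\ker\pi$ of positive dimension $j$. By Theorem~\ref{thm:non_vanishing} there is $Q\in\hat A$ with $R^{j}\pi_*(\sC_0\otimes Q)\neq0$, a coherent sheaf supported on $\pi(\Supp\sC_0)$; by the bound on the cohomological dimension of fibres, some fibre of $\pi$ meets $\Supp\sC_0$ in dimension $\geq j$, so $X\supseteq\Supp\sC_0$ contains a translate of the positive-dimensional abelian subvariety $B$.

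This is impossible for an irreducible theta divisor. If $X$ contained a general fibre of $\pi$ then $X=\pi^{-1}(D_V)$ for a divisor $D_V$ on $V$, whence $L=\pi^{*}\sO_V(D_V)$ and $\chi(L)=0$ by Hirzebruch--Riemann--Roch (a class pulled back from $V$ has vanishing component in degree $\dim A>\dim V$, and $\operatorname{td}A=1$), contradicting $\chi(L)=1$; and the remaining case --- $X$ containing only some special translate of $B$ --- is excluded by the classical fact that $X$ irreducible forces $(A,X)$ to be indecomposable, hence $X$ to be geometrically non-degenerate and to contain no translate of a positive-dimensional abelian subvariety. Therefore $\tau(\omega_X)=\omega_X$ and $X$ is strongly F-regular (and, in particular, normal).

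The step I expect to be the main obstacle is the one I flagged as ``the heart of the matter'': producing a \emph{proper} associated abelian subvariety $\hat W_0\subsetneq\hat A$ out of the failure of strong F-regularity. As the remark following Theorem~\ref{thm:GV} stresses, the loci $W^i$ and the sheaves $\tilde\Lambda_0$ only register ``characteristic-zero-type'' vanishing and are blind to accidental Frobenius-nilpotency; moreover, for the theta divisor itself the sheaves $\omega_X\otimes\sP_y|_X$ are, by the displayed exact sequence together with Mumford's index theorem on $A$, acyclic in positive degrees for $y$ outside a proper closed set, so nothing can be read off $i_*\omega_X$ directly beyond F-purity. One must therefore extract everything from the obstruction module $\sC_0$ and its Cartier structure, verifying in particular that $\sC_0$, being a \emph{nonzero} Cartier module supported strictly inside the irreducible ample divisor $X$, cannot have $\hat A$ as its only associated abelian subvariety. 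Making this precise --- along with the Grothendieck-duality computations of $D_A(\sC_0)$ and $D_A(F^{e}_*\sC_0)$, and the compatibility of $\tau(\omega_X)$ with pushforward to $A$ and with the Fourier--Mukai transform, all before normality of $X$ is known --- is where the real work lies.
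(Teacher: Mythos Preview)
Your overall architecture matches the paper's --- set up a Cartier module detecting the failure of strong F-regularity, extract a proper abelian subvariety $\hat W\subsetneq\hat A$ via Theorems~\ref{thm:GV} and~\ref{thm:non_vanishing}, and reach a geometric contradiction --- but two of your concrete choices do not work. First, you take the \emph{quotient} $\sC_0=\omega_X/\tau(\omega_X)$; with this choice the step you call ``the heart of the matter'' is not merely difficult but can fail outright. If the non-F-regular locus $\Sigma$ is zero-dimensional (a single point, say), then $R\hat S(D_A(\sC_0))$ is locally free on $\hat A$ and $\tilde\Lambda_0$ is a nonzero torsion-free sheaf of full support, so its only associated prime is the generic point: $\hat W_0=\hat A$, $j=0$, and Theorem~\ref{thm:non_vanishing} is vacuous. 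The paper instead uses the \emph{sub}module $\Omega_0=\omega_X\otimes\tau$, whose support is all of $X$. Then $0<\dim\Supp\tilde\Lambda_0<g$ can be proved directly: the upper bound because $H^0(\sO_A(X)\otimes\tau'\otimes Q)=0$ for general $Q$ (else every translate of $X$ would contain $Z(\tau')$), and the lower bound because otherwise $\tilde\Lambda_0$ is Artinian and \cite[Cor.~3.2.3]{HP16} would force $\Supp\Omega=A\neq X$.

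Second, your endgame rests on the assertion that an irreducible theta divisor contains no translate of a positive-dimensional abelian subvariety, which is false: for a bielliptic curve $C$ of genus $3$ with double cover $f:C\to E$, the map $e\mapsto f^{-1}(e)$ embeds a translate of an elliptic curve into $W_2=\Theta\subset J(C)$. The paper's contradiction is different and uses more of the generic vanishing output. From $R^j\pi_*(\omega_X\otimes Q)=0$ (via $0\to Q\to Q(X)\to\omega_X\otimes Q\to 0$ and relative ampleness) one sees that $R^j\pi_*(\Omega_0\otimes Q)\neq 0$ forces $R^{j-1}\pi_*(\omega_X/\Omega_0)\neq 0$, so some fibre $Z_{w_0}$ of $Z=V(\tau)$ over $W$ has dimension $\geq j-1$. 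Crucially, because $P+\hat W\subseteq\Supp\tilde\Lambda_0$, one has $H^0(\sO_A(T_{-w}^*X)\otimes\tau')\neq 0$ for \emph{every} $w\in\tilde W=\lambda^{-1}(P+\hat W)$, hence $w+Z\subset X$ for all such $w$. Then $\tilde W+Z_{w_0}$ is a divisor contained in $X$, so by irreducibility $X=\tilde W+Z_{w_0}$, which is invariant under translation by a positive-dimensional abelian subvariety and therefore not ample.
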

\begin{remark} By \cite{Hacon11} we already know that $(A,X)$ is F-pure, however the strongly F-regular statement is substantially more involved. In particular it is natural to ask if it implies Witt rationality. 
\end{remark}
More generally one expects a positive answer to the following.
\begin{question} Let $X$ be a strongly F-regulary variety, then does $X$ have Witt rational singularities?

\end{question}
\begin{remark} We emphasize that several of our techniques are based on and sometimes closely follow the arguments of \cite{Wat16} and as indicated above Theorem \ref{t-main} should be credited to A. Watson. However \cite{Wat16} is yet unpublished and appears to contain several imprecise statements. We hope that the simplifications  that we introduce will help to clarify and disseminate Watson's result.
\end{remark}

\begin{theorem}\label{t-mt} Let $(A,X)$ be a PPAV over $k$ with no supersingular factors  and with $X$  irreducible. Furthermore, let $D\in |mX|$ be a divisor such that $(m,p)=1$ and $\lfloor D/m\rfloor =0$. Then, $(X,D/m)$ is purely F-regular. In particular $(X,D/m)$ is klt and ${\rm mult }_P(D)\leq m j$ for any codimension $j$ point.
\end{theorem}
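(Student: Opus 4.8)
The plan is to adapt the proof of Theorem~\ref{t-main} to the presence of the boundary $\Delta:=\tfrac{1}{m}D|_X$, the actual target being that the pair $\big(A,X+\tfrac{1}{m}D\big)$ is purely F-regular. By F-adjunction along the Cartier divisor $X$ (which is normal by Theorem~\ref{t-main}, so that no correction term appears in the different and $\omega_X\cong\mathcal O_A(X)|_X$), this restricts to the assertion that $(X,\Delta)$ is strongly, equivalently---since $\lfloor\Delta\rfloor=0$---purely, F-regular; purity of $\big(A,X+\tfrac{1}{m}D\big)$ also forces this pair to be log canonical, so that at a codimension-$j$ point $P$ of the \emph{smooth} variety $A$ one gets $\mult_P\big(X+\tfrac{1}{m}D\big)\le j$ and hence $\mult_P(D)\le mj$. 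For the set-up, note that $D\sim mX$ restricts to $D|_X\sim mX|_X$, so $\mathcal O_X\big((p^e-1)\Delta\big)\cong\omega_X^{\otimes(p^e-1)}$ whenever $m\mid p^e-1$; here is where the hypothesis $\gcd(m,p)=1$ is used: the multiplicative order $s$ of $p$ modulo $m$ is finite, so for $s\mid e$ the round-ups disappear, and after the standard twist by a negative multiple of $\mathcal O_A(X)$ the $\big(X+\tfrac{1}{m}D\big)$-twisted Grothendieck trace becomes a genuine Cartier-module structure $F^s_*\Omega_0\to\Omega_0$ on a sheaf $\Omega_0$ on $A$ built from the adjoint ideal $\mathcal J:=\mathrm{adj}_X\big(A,\tfrac{1}{m}D\big)\subseteq\mathcal O_A$; purity of $\big(A,X+\tfrac{1}{m}D\big)$ amounts to $\mathcal J=\mathcal O_A$.

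The core is to prove $\mathcal J=\mathcal O_A$ by feeding $\Omega_0$, and Cartier modules derived from it, into the refined generic vanishing Theorem~\ref{thm:GV}---this is where the hypothesis that $A$ has no supersingular factor is used. Assume $\mathcal J\subsetneq\mathcal O_A$ and set $V:=V(\mathcal J)$, $d:=\dim V$. Using that $\big(A,X+\tfrac{1}{m}D\big)$ is F-pure (which follows from the generic vanishing circle of ideas, cf.\ \cite{Hacon11}), the failure of purity is witnessed, after restriction to $X$, by a nonzero F-pure Cartier module $\mathcal T$ supported on $V$, on which the twisted trace is surjective. Running Theorem~\ref{thm:GV} on $i_*\mathcal T$ (for $i\colon X\hookrightarrow A$) and exploiting surjectivity of the trace---which makes $\varprojlim_e H^d(A,\sP_y\otimes F^{es}_*i_*\mathcal T)$ surject onto $H^d(A,\sP_y\otimes i_*\mathcal T)$---one shows that $W^d$ has a component of codimension exactly $d$; by part~\eqref{itm:GV:supersingular_factor} this component is a torsion translate of a codimension-$d$ abelian subvariety $\hat W\subseteq\hat A$, and dualizing it as in Theorem~\ref{thm:non_vanishing} yields $R^d\pi_*(i_*\mathcal T\otimes Q)\ne0$ for some $Q$, where $\pi\colon A\to W$ is dual to $\hat W\hookrightarrow\hat A$ and has relative dimension $d$. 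Hence $V$---and therefore $X$---contains a fibre of $\pi$, i.e.\ a translate of a $d$-dimensional abelian subvariety of $A$ with $1\le d\le g-2$, which contradicts the indecomposability of the PPAV $(A,X)$ (equivalent to irreducibility of $X$): the theta divisor of an indecomposable principally polarized abelian variety contains no translate of a positive-dimensional proper abelian subvariety. Throughout, the principal polarization is precisely what forces the analysis to localize on these special loci, entering via $h^0(\mathcal O_A(X)\otimes\sP_y)=1$ and the concentration, for $y\ne0$, of the cohomology of $\mathcal O_A(\pm X)\otimes\sP_y$ in a single degree.

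The one case not addressed by this purely cohomological mechanism is $d=0$: if $\big(A,X+\tfrac{1}{m}D\big)$ is F-pure but not purely F-regular along a finite set of closed points $x$, then $i_*\mathcal T$ is a skyscraper, its Fourier--Mukai transform is locally free, and $\varprojlim_e H^0$ survives at \emph{every} $y\in\hat A$, so Theorem~\ref{thm:GV} gives no contradiction. This situation---where $\mult_x(D|_X)=m(g-1)$, i.e.\ $\Delta$ sits exactly on the log canonical boundary at $x$---must be excluded by hand, using once more that $D|_X\in|m\omega_X|$ with $\omega_X$ the restriction of the principal polarization, for instance through the test-ideal/Fourier--Mukai description in part~\eqref{itm:GV:depth} or by a direct section-counting argument along the lines of \cite{Wat16}. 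I expect this isolated-center case to be the main obstacle; the remainder is a faithful adaptation of the proof of Theorem~\ref{t-main}. Once $\mathcal J=\mathcal O_A$ is established, ``purely F-regular $\Rightarrow$ klt'' is standard, and the multiplicity bound follows from the log canonicity of $\big(A,X+\tfrac{1}{m}D\big)$ as noted above.
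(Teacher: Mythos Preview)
Your central contradiction is false: it is \emph{not} true that the theta divisor of an indecomposable PPAV contains no translate of a positive-dimensional proper abelian subvariety. Take the Jacobian $J(C)$ of a smooth bielliptic curve $C$ of genus $g\ge 3$ with a degree-$2$ cover $\phi\colon C\to E$ of an ordinary elliptic curve. The theta divisor $\Theta$ is irreducible (being the image of the symmetric product $C^{(g-1)}$), so $(J(C),\Theta)$ is indecomposable; yet for any effective divisor $D_0$ of degree $g-3$ on $C$ the assignment $P\mapsto D_0+\phi^{-1}(P)$ lands in $\Theta$ and its image is a translate of the elliptic curve $\phi^*(E)\subseteq J(C)$. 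So from $R^d\pi_*(i_*\mathcal T\otimes Q)\ne 0$ you only extract a $d$-dimensional abelian subvariety inside $X$ with $1\le d\le g-2$, and this is no contradiction. The $d=0$ case you flag as ``the main obstacle'' is in fact the minor one.

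The paper avoids this by a different choice of Cartier module: not the quotient $i_*\mathcal T$ supported on the failure locus, but the rank-one torsion-free sheaf $\Omega_0=\tau\otimes\sO_A(X)$ on $A$, where $\tau\subseteq\sO_A$ is the image of the twisted trace. With this $\Omega_0$ the codimension $j$ of the relevant associated prime of $\tilde\Lambda_0$ is decoupled from $\dim V(\tau)$. The crucial gain is that for every $w$ in the $(g-j)$-dimensional subvariety $\tilde W=\lambda^{-1}(\hat W+P)\subseteq A$ one has $H^0\big(A,\sO_A(T_{-w}^*X)\otimes\tau\big)\ne 0$; since $h^0(\sO_A(X))=1$, this forces every $\tilde W$-translate of $Z=V(\tau)$ to lie in $X$. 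Combined with $\dim Z_{w_0}\ge j-1$ (coming from $R^{j-1}\pi_*\mathcal Q\ne 0$ in the obvious exact sequence), this places the \emph{divisor} $Z_{w_0}+\tilde W$ inside $X$; irreducibility gives $X=Z_{w_0}+\tilde W$, which is $\tilde W$-translation-invariant and hence not ample. Passing from a subvariety of $X$ to a divisor equal to $X$ is exactly what your quotient-sheaf route loses.

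The same choice of $\Omega_0$ also dissolves your $d=0$ problem. What must be excluded is $\dim\Supp\tilde\Lambda_0=0$, and this falls to an argument specific to $\Omega_0\subseteq\sO_A(X)$: if $\tilde\Lambda_0$ were Artinian then $\tilde\Omega_0$ would be a successive extension of numerically trivial line bundles, so the nonzero map $F^t_*\Omega_0\to\tilde\Omega_0$ would yield a nonzero element of $\Hom\big(F^t_*(\sO_A(X')\otimes\tau),\sO_A\big)$ for some translate $X'$ of $X$; by Grothendieck duality and passage to reflexive hulls this produces a nonzero section of $\sO_A(-X')$, which is absurd.
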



{
\subsection{$V$-modules} A biproduct of our arguments is a new method of constructing examples of Cartier modules $F^s_* \Omega_0 \to \Omega_0$ on an abelian variety $A$ with a good control on  the loci
\begin{equation}
\label{eq:W_i_F}
W^i_F=\Big\{ \ Q \in \hat A \ \Big| \ \varprojlim_e H^i\big(A,\Omega_0 \otimes Q^{p^e}\big)\ne 0 \ \Big\}.
\end{equation}
We hope that this will lead in the long run to a much better understanding of generic vanishing in positive characteristic in general. The central notion of this approach is the notion of a $V$-module on $\hat A$ which is the abstraction of the sheaf $\tilde \Lambda_0$ of point \eqref{itm:GV:depth} of Theorem \ref{thm:GV}. According to Definition \ref{def:V_module}, it is a coherent sheaf $\sM$ on $\hat A$ together with an $\sO_{\hat A}$-linear morphism $\sM \to V^{s,*} \sM$, where $s>0$ is an integer and $V$ is the Verschiebung isogeny of $\hat A$. The main statement that allows one to run the above mentioned general process of constructing examples of Cartier modules is the following:

\begin{theorem}
\label{thm:V_module_intro} {\scshape (equivalent of Theorem \ref{thm:V_module}})
Let $A$ be an abelain variety over $k$. If $\sM \to V^{s,*} \sM$ is a $V$-module on $\hat A$, then $\varprojlim_e \sH^i\Big( RS\big(D_{\hat A} (  V^{e s,*} \sM )\big) \Big)=0$ for every $i \neq 0$.
\end{theorem}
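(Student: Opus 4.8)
The plan is to exploit the Fourier–Mukai duality exchanging the Verschiebung $V$ on $\hat A$ with the Frobenius $F$ on $A$, together with the vanishing statement of \cite[Lem 3.1.2]{HP16} for Cartier modules on abelian varieties. First I would set up the inverse system: writing $\sM_e := V^{es,*}\sM$, the given $V$-module structure $\sM \to V^{s,*}\sM$ produces compatible maps $\sM_e \to \sM_{e+1}$, hence an inverse system of the complexes $RS\big(D_{\hat A}(\sM_e)\big)$ on $A$, and we must show its inverse limit cohomology sheaves vanish in all degrees $i \neq 0$. The key observation is that $D_{\hat A}$ commutes appropriately with pullback along the isogeny $V^s$ up to the dual isogeny (the Frobenius $F^s$ on the dual side, since $V$ and $F$ are dual isogenies between $\hat A$ and $\hat A^{(p)}$), and that $RS \circ V^{es,*}$ is intertwined, via the defining exchange property of the Fourier–Mukai transform under isogenies, with $F^{es}_*\circ RS$ (possibly after twisting/pulling back along the correct Frobenius twists, which I would track carefully but not belabor here). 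Concretely, I expect an identification of the inverse system $\big(RS(D_{\hat A}(\sM_e))\big)_e$ with an inverse system of the form $\big(F^{es}_* \Omega_e'\big)_e$ where $\Omega_0' := \sH^{?}\big(RS(D_{\hat A}(\sM))\big)$ carries a natural Cartier module structure $F^s_*\Omega_0' \to \Omega_0'$ coming from the $V$-module structure on $\sM$ transported across the duality.

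Second, I would reduce to a single cohomology sheaf in each degree. Since $RS$ followed by $D_A$ inverts the Fourier–Mukai transform up to a shift and a sign (as recalled in the introduction via $\Omega = (-1_A)^* D_A(RS(\Lambda))[-g]$), and since $\sM$ is a genuine sheaf concentrated in degree $0$, the complex $D_{\hat A}(\sM)$ has cohomology concentrated in a range $[0,g]$, and $RS$ of it likewise. One then checks, degree by degree, that passing to the inverse limit over $e$ of the $e$-th term kills the higher cohomology: this is exactly the mechanism already recalled in the "Positive characteristic background" subsection, where for a Cartier module $\Omega_1 = F^s_*\Omega_0 \to \Omega_0$ on an abelian variety one gets $H^j(\Omega \otimes \hat L^\vee)=0$ for $j>0$ after taking the inverse limit, the point being that Serre vanishing applies to each Frobenius push-forward twisted by a sufficiently ample line bundle and the Mittag–Leffler condition holds. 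Here the analogous input is that $RS$ of a sheaf on $\hat A$ is computed by cohomology of the sheaf twisted by the Poincaré bundle, so $\sH^i\big(RS(D_{\hat A}(V^{es,*}\sM))\big)_y$ is governed by $H^i$ of a twist of (a Frobenius push-forward of) a fixed coherent sheaf, which after passing to the limit over $e$ vanishes for $i$ outside the relevant degree by the WIT-type / Serre-vanishing argument combined with $D_{\hat A}$ behaving like a dualizing functor of amplitude $g$.

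Third, I would assemble the pieces: the intertwining of Step 1 turns the target statement $\varprojlim_e \sH^i\big(RS(D_{\hat A}(V^{es,*}\sM))\big)=0$ for $i\neq 0$ into the statement $\varprojlim_e \sH^i\big(F^{es}_* (\text{something})\big)=0$, i.e. a vanishing for the inverse limit of a Cartier-module-type system on $A$, which is precisely the content of \cite[Lem 3.1.2]{HP16} / \cite[Thm 3.1.1]{HP16} as recalled above (or a variant thereof adapted to complexes rather than sheaves, obtained by the same Serre-vanishing plus Mittag–Leffler argument applied in each cohomological degree). The main obstacle I anticipate is purely bookkeeping but genuinely delicate: keeping precise track of Frobenius twists, the identification $V = \widehat{F}$ of isogenies between $\hat A$ and $\hat A^{(p)}$ under Cartier–Serre duality, and the commutation of $D_{\hat A}$ with pullback along $V^s$ (which introduces a twist by the relative dualizing sheaf of $V^s$, trivial since $V^s$ is étale-locally a torsor under a finite group scheme but still a line bundle that must be matched up). Getting these compatibilities exactly right — so that the abstract $V$-module structure really does produce a bona fide Cartier module on the Fourier–Mukai side to which \cite{HP16} applies — is where the real work lies; the vanishing itself is then formal.
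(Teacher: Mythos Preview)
Your Step 1 is right and matches the paper: the identification $RS(D_{\hat A}(V^{es,*}\sM)) \cong F^{es}_* RS(D_{\hat A}(\sM))$ follows from Mukai's exchange formulas, so $\sK_e := RS(D_{\hat A}(V^{es,*}\sM))$ is an inverse system associated to a derived Cartier module.

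The gap is in Steps 2 and 3. You propose to get the vanishing by ``Serre vanishing plus Mittag--Leffler applied in each cohomological degree,'' invoking \cite[Lem 3.1.2]{HP16}. But that lemma gives vanishing of $H^i(\varprojlim F^e_*\Omega_0 \otimes \hat L^\vee)$ for a \emph{sheaf} $\Omega_0$; it does not say that the Cartier module $\sH^i(\sK_0)$ is nilpotent. Since $\sH^i(F^{es}_*\sK_0)=F^{es}_*\sH^i(\sK_0)$, what you need is precisely that each $\sH^i(\sK_0)$ for $i\neq 0$ is a nilpotent Cartier module, and there is no Serre-vanishing reason for this to hold. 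The phrase ``a variant adapted to complexes, obtained by the same argument'' hides the actual content of the theorem.

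The paper's proof supplies the missing idea: it makes a \emph{return trip} through Fourier--Mukai. One applies $R\hat S(D_A(-))$ to $\sK_e$ to recover $\sM_e$. If $(\sK_e)$ were not nilpotent in some lowest degree $j<0$, one filters $\sK_e$ by truncation and applies \cite[Thm 3.1.1]{HP16} to each cohomology sheaf $\sH^i(\sK_e)$ (which \emph{is} an honest Cartier module, so that result applies and gives that $R\hat S(D_A(\sH^i(\sK_e)))$ is nilpotent outside degree $0$). An induction along the truncation triangles using Lemma~\ref{lem:nilpotent_not_nilpotent} then forces $(\sM_e)$ to be non-nilpotent in cohomological degree $-j>0$, contradicting the fact that $\sM$ is a sheaf. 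So the crucial input you are missing is not a direct vanishing but a contradiction argument that uses Mukai inversion together with the hypothesis that $\sM$ lives in a single degree.
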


Using Theorem \ref{thm:V_module_intro}, one can see that the Cartier module $\sH^0\Big( RS\big(D_{\hat A} ( \sM )\big) \Big)$ on $A$ is a Cartier module for which many properties of the associated loci $W^i_F$ can be read off directly from the $V$-module $\sM$.  We refer the reader to Corollary \ref{cor:V_module_comes_from_Cartier_module} for the precise statement of  some of these properties. We believe for example that using Corollary \ref{cor:V_module_comes_from_Cartier_module}, the possibilities for $W^0_F$ can be completely classified, which question we leave for future articles. 
Here, we only state one sample consequence of Corollary \ref{cor:V_module_comes_from_Cartier_module}:

\begin{corollary}
\label{cor:sample_Cartier_module}
Let $L$ be a line bundle on the dual $\hat A$ of an abelian variety $A$ over $k$, and let $W^0_F$ be the $0$-th Frobenius stable cohomology support locus defined in \eqref{eq:W_i_F}.
\begin{enumerate}
\item \label{itm:sample_Cartier_module:supersingular} If $A$ is supersingular, and $0 \neq s \in H^0\big( \hat A, L^{p-1} \big)$, then there is a Cartier module $\Omega_0$ on $A$ such that $W^0_F= \hat A \setminus V(s)$.
\item \label{itm:sample_Cartier_module:no_supersingular_factor} If $A$ has no supersingular factor, and $0 \neq s\in H^0 \big(\hat A, L^{-1} \otimes V^* L \big)$, then there is a Cartier module $\Omega_0$ on $A$ such that 
\begin{equation*}
W^0_F= \hat A \setminus \big\{ \ y \in \hat A \ \big| \ \textrm{there are infinitely many $n \in \bN$ such that $p^n y \in V(s)$} \ \big\}
\end{equation*}
In particular, if $A$ is a surface and $V(s)$ has no component that is a torsion translate of an abelian subvariety, then $\hat A \setminus W^0_F$ is the union of countably many closed points. 
\end{enumerate}
\end{corollary}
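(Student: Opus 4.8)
The plan is to obtain $\Omega_0$ through the $V$-module machinery: from the pair $(L,s)$ build a $V$-module $\sM$ on $\hat A$, pass to the associated Cartier module $\Omega_0:=\mathcal H^0\big(RS(D_{\hat A}(\sM))\big)$ on $A$ as in the discussion following Theorem \ref{thm:V_module_intro}, and then extract $W^0_F$ from Corollary \ref{cor:V_module_comes_from_Cartier_module}, which carries all the real weight. Concretely, take $\sM:=L$, the $V$-module structure $\varphi\colon L\to V^{s,*}L$ being induced by $s$: in case \eqref{itm:sample_Cartier_module:no_supersingular_factor} with $s=1$, directly via $s\in H^0\big(\hat A,L^{-1}\otimes V^{*}L\big)=\Hom_{\sO_{\hat A}}(L,V^{*}L)$; in case \eqref{itm:sample_Cartier_module:supersingular}, using that supersingularity of $\hat A$ provides an identification $V^{*}L\cong L^{\otimes p}$, so that $s\in H^0(\hat A,L^{p-1})=\Hom_{\sO_{\hat A}}(L,L^{\otimes p})=\Hom_{\sO_{\hat A}}(L,V^{*}L)$ again determines $\varphi$ (with $s=1$). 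By the paragraph after Theorem \ref{thm:V_module_intro}, $\Omega_0$ is a Cartier module on $A$, and Corollary \ref{cor:V_module_comes_from_Cartier_module} reads off $W^0_F$ from the iterated structure map $\varphi^{(e)}\colon L\to V^{e,*}L$; up to the canonical twist $\varphi^{(e)}$ is the product $s\cdot V^{*}(s)\cdots V^{(e-1),*}(s)$, a section of $L^{-1}\otimes V^{e,*}L$ with reduced zero locus $\bigcup_{n=0}^{e-1}V^{-n}\big(V(s)\big)$, and — because of the inverse limit defining $W^0_F$, in which the leading factors are allowed to vanish — a closed point $y$ lies outside $W^0_F$ precisely when $y\in V^{-n}\big(V(s)\big)$ for infinitely many $n$.

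\emph{Identifying $W^0_F$.} Since $[p]=V\circ F_{\hat A/k}$ with $F_{\hat A/k}$ inducing the identity on the underlying topological space, the isogenies $V$ and $[p]$ act identically on $|\hat A|$, whence $V^{-n}\big(V(s)\big)=[p^{n}]^{-1}\big(V(s)\big)$ as reduced closed subsets. In case \eqref{itm:sample_Cartier_module:no_supersingular_factor} this yields
\begin{equation*}
\hat A\setminus W^0_F=\big\{\,y\in\hat A\ \big|\ p^{n}y\in V(s)\ \textrm{for infinitely many }n\in\bN\,\big\}.
\end{equation*}
In case \eqref{itm:sample_Cartier_module:supersingular}, supersingularity forces $V$ to be purely inseparable and makes the iteration stabilise at the first step, so that Corollary \ref{cor:V_module_comes_from_Cartier_module} gives $\hat A\setminus W^0_F$ equal to the reduced zero locus of $s$ itself, i.e. $W^0_F=\hat A\setminus V(s)$.

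\emph{The surface statement.} Let now $A$ be an abelian surface with no supersingular factor and suppose no component of $V(s)$ is a torsion translate of an abelian subvariety; we must show $\hat A\setminus W^0_F$ contains no curve. If it contained every point of an irreducible curve $C$, then, since each $C\cap[p^{n}]^{-1}(V(s))$ is finite or all of $C$ while a countable union of finite sets is not all of $C$, the ``infinitely many $n$'' condition would force $[p^{n}](C)$ to be a component of $V(s)$ for infinitely many $n$; by pigeonhole $[p^{n}](C)=D$ for a single component $D$ and infinitely many $n$, whence $[p^{m}](D)=D$ for some $m\ge1$. If $D$ is non-degenerate, its normalisation $\widetilde D$ has genus $\ge 2$, so the self-map of $\widetilde D$ induced by $[p^{m}]|_{D}$ is an isomorphism; hence $D$ admits compatible lifts along all $[p^{km}]$, forcing the homomorphism $\mathrm{Jac}(\widetilde D)\to\hat A$ determined by $D$ to be divisible by $p^{km}$ for every $k$ in the finitely generated free abelian group $\Hom(\mathrm{Jac}(\widetilde D),\hat A)$, hence to vanish — contradicting non-degeneracy. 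If instead $D$ is a translate of an elliptic curve, then $[p^{n}](C)=D$ for infinitely many $n$ forces $C$ itself to be a torsion translate of an elliptic curve, and the displayed description of $\hat A\setminus W^0_F$ together with the hypothesis on $V(s)$ shows that only finitely many points of $C$ satisfy the ``infinitely many $n$'' condition — again a contradiction. Thus $\hat A\setminus W^0_F$ is a union of countably many closed points.

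\emph{Main obstacle.} The construction is formal and the substance lies in Corollary \ref{cor:V_module_comes_from_Cartier_module} (hence Theorem \ref{thm:V_module_intro}), which we invoke. Within the present argument the delicate points are the passage from the inverse limit to the ``infinitely many $n$'' description and its collapse to $\hat A\setminus V(s)$ in the supersingular case, and, in the surface statement, the geometric input that a non-degenerate curve on an abelian surface cannot be preserved by a nontrivial $[p^{m}]$.
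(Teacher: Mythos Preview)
Your construction of the Cartier module and the identification of $W^0_F$ in both cases are correct and essentially identical to the paper's proof: set $\sM=L$ with $V$-module structure given by $s$, put $\Omega_0=\sH^0\big(RS(D_{\hat A}(\sM))\big)$, and read off $W^0_F$ from Corollary~\ref{cor:V_module_comes_from_Cartier_module} using that $V$ and $[p]$ agree topologically (with $[p]$ topologically the identity in the supersingular case).

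For the surface addendum, however, there is a genuine gap. Your reduction ``we must show $\hat A\setminus W^0_F$ contains no curve'' is not sufficient: a subset of the closed points of a surface that contains no full irreducible curve need not be countable. What you actually prove (via the Jacobian argument and the elliptic-translate case) is essentially the key lemma the paper isolates, namely that no component $D$ of $V(s)$ can satisfy $[p^m](D)=D$ for some $m\ge 1$ --- equivalently, $V(s)$ and $p^e(V(s))$ share no common component once $e$ is large. But you then apply this lemma to the wrong target. The correct use is the paper's direct counting argument: fix $e_0$ with $V(s)\cap[p^e]^{-1}V(s)$ finite for all $e\ge e_0$; then any $y\in\hat A\setminus W^0_F$ lies in $[p^j]^{-1}V(s)\cap[p^l]^{-1}V(s)$ for some $l\ge j+e_0$, and
\[
[p^j]^{-1}V(s)\cap[p^l]^{-1}V(s)=[p^j]^{-1}\big(V(s)\cap[p^{l-j}]^{-1}V(s)\big)
\]
is finite since $[p^j]$ is a finite isogeny. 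Hence $\hat A\setminus W^0_F$ is contained in a countable union of finite sets.

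Two smaller issues in your surface argument are worth flagging. First, the step ``a countable union of finite sets is not all of $C$'' (used to force $C\subseteq[p^n]^{-1}V(s)$ for infinitely many $n$) implicitly assumes $k$ is uncountable; the paper makes no such assumption, and over $k=\overline{\mathbb{F}_p}$ this step fails. Second, in the elliptic-translate case the relevant conclusion is that the \emph{component} $D$ of $V(s)$, not $C$, is a torsion translate of an elliptic curve: from $[p^m](D)=D$ and $D=E+x$ one gets $(p^m-1)x\in E$, so $D$ is a torsion translate of $E$, contradicting the hypothesis directly. Your phrasing there obscures this and the final sentence of that case does not actually lead to a contradiction as written.
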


We note that as the degree of $V^*L$ is bigger than that of $L$, an $s$ as in point \eqref{itm:sample_Cartier_module:no_supersingular_factor} of Corollary \ref{cor:sample_Cartier_module} can be found for any ample enough line bundle $L$ on $\hat A$. 

We also note that in the surface case of point \eqref{itm:sample_Cartier_module:no_supersingular_factor} of Corollary \ref{cor:sample_Cartier_module} whether $\hat A \setminus W^0_F$ is infinite seems to be connected to the arithmetic behavior of $V(s)$. That is, the only obvious points that are contained in $\hat A \setminus W^0_F$ are the prime-to-$p$ points of $V(s)$, which can be infinite only if either $V(s)$ contains a positive dimensional abelian subvariety or an irreducible component defined over a finite subfield of $k$, by the Manin-Mumford conjecture \cite[Thm 3.6]{PR04}.

Sample questions that our above discussion raises are the following:

\begin{question}
For an abelian variety $A$, for which  $\overline{W^0_F}=\hat A$:
\begin{enumerate}
\item Is $\hat A \setminus W^0_F$ the countable union of constructible subsets?
\item Is the codimension $1$ part of $\hat A \setminus W^0_F$ a  closed, or at least constructible subset?
\item If $A$ has no supersingular factor,  then is every codimension $1$ component of $\hat A \setminus W^0_F$ a torsion translate of an abelian subvariety?
\item If $A$ has no supesingular factor, is then the codimension $2$ part of $\hat A \setminus W^0_F$ not constructible if and only if $V(s)$ contains a component of arithmetic flavor as in the Manin-Mumford conjecture \cite[Thm 3.6]{PR04}?
\end{enumerate}
\end{question}

\subsection{Structure of the article}

The article is structured based on the theorems mentioned in the introduction. 
\begin{itemize}
\item In Section \ref{sec:prelim_and_notation}, we collect some definitions and background statements.
\item In Section \ref{sec:nilpotent_systems}, we collect a few lemmas about the nilpotence behavior of direct and inverse systems in $D^b_c(X)$.
\item In Section \ref{sec:Frob_stab_coho_support}, we show Theorem \ref{thm:GV}.
\item In Section \ref{sec:V_modules_examples}, we show Theorem \ref{thm:V_module_intro}, Corollary \ref{cor:sample_Cartier_module} and Proposition \ref{prop:supersingular_basic}.
\item In Section \ref{sec:max_dimensional_support_loci}, we show Theorem \ref{thm:non_vanishing}.
\item In Section \ref{sec:proof_of_main_thm}, we show Theorem \ref{t-main} and Theorem \ref{t-mt}.
\end{itemize}

\subsection{Acknowledgement}

The first author was partially supported by NSF research grants no: DMS-1952522, DMS-1801851 and by a grant from the Simons Foundation; Award Number: 256202.

The second author was partially supported by the following grants: grant \#200021/169639 from the Swiss National Science Foundation,  ERC Starting grant \#804334.
}

%
%

\section{Preliminaries and notation}\label{S-P}
\label{sec:prelim_and_notation}

\emph{We work over a fixed algebraically closed field $k$ of characteristic $p>0$.} We fix the following notation:
\begin{itemize}
\item $A$ is   a  $g$-dimensional \emph{abelian variety} over  $k$. 
\item $\hat A$ is the \emph{dual abelian variety} of $A$.
\item $\sP$ is the \emph{Poincar\'e line bundle} of $A$, which is a line bundle on $A \times \hat A$.
\item If we write $Q \in \hat A$ for a sheaf $Q$ on $A$, then we mean that $Q = \sP_y$ for some closed point $y \in \hat A$.
\item The \emph{Fourier-Mukai transforms} $R\hat S:D(A)\to D(\hat A)$ and $RS:D(\hat A)\to D(A)$ are defined by
\[ R\hat S(?)=Rp_{\hat A,*}(Lp_A^*?\otimes \mathcal P),\qquad R S(?)=Rp_{A,*}(Lp_{\hat A}^*?\otimes \mathcal P).\]
\item If $L$ is an ample line bundle on $A$, then let $\hat L=R^0\hat S(L)$. In this case, $\phi _L ^*(\hat L ^\vee )\cong \oplus _{h^0(L)}L$, where $\phi_L:A\to \hat A$ is the isogeny defined by $\phi _L( a)=t_{a}^*L^\vee \otimes L$ for any $ a\in  A$. In particular, if $L$ is the line bundle of a principal polarization, then $\deg \phi_L=1$ and $h^0(L)=1$.
\item A \emph{variety} is a scheme of finite type over $k$.
\item As in \cite{Mukai81}, if $X$ is a variety over $k$, then $D(X)$ is the derived category of sheaves of $\sO_X$-modules, and we denote certain full subcategories by putting adequate subscripts and superscripts on $D(X)$. Subscript $c$ and $qc$ means that the cohomology sheaves are required to be coherent and quasi-coherent, respectively. Superscript $b$, $-$ and $+$ means that the cohomology sheaves are zero outside of a finite range, above a finite cohomological degree and below a finite cohomological degree.

We use homotopy-colimits, which are derived category versions of colimits. The only important fact for the article about them is that $\sH^i\big(\hocolim( \_ ) \big)=\varinjlim \big( \sH^i( \_ ) \big)$.

When working with elements of $D(X)$ and its variants, we also use the abbreviation $H^i(X, \_) = R^i \Gamma(X,\_)$.
\item $F$, $V$ and $[p]$ are the usual isogenies of $A$ (Frobenius, Verschiebung and multiplication by $p$). Note that as $k= \overline{k}$, the isogeny $F$ identifies with the absolute Frobenius morphism of $A$. Note also that $F= \hat V$. See \cite[Sec 2.3]{HP16} for further details. 
\end{itemize}
Recall the following.

\begin{theorem}[Mukai \cite{Mukai81}] 
\label{thm:Mukai}
The following equalities  hold in $D_{\rm qc}(A)$ and $D_{\rm qc}(\hat A)$:
\[ R\hat S \circ RS =(-1_{\hat A})^*[-g],\qquad R S \circ R\hat S =(-1_A)^*[-g],\]
\[ D_A\circ RS\cong ((-1_A)^*\circ RS \circ D_{\hat A})[g],\]
\[ RS\circ T_x^*\cong (\otimes P_{-x})\circ RS,\]
\begin{equation*}
\phi^* \circ RS = RS \circ \hat{\phi}_*,
\qquad
\psi_* \circ RS = RS \circ \hat{\psi}^*
\end{equation*}
where $x\in \hat A$, $P_x=\mathcal P |_{A\times x}$, $T_x$ denotes translation by $x$ on $\hat A$,  $[g]$ denotes shift by $g$ spaces to the left, and $\phi$ and $\psi$ are any isogenies of $A$. 
\end{theorem}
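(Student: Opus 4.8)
The plan is to isolate the single non-formal ingredient — the ``index $g$'' vanishing $R\hat S(\sO_A)\cong k(0_{\hat A})[-g]$, equivalently $R^ip_{\hat A,*}\sP=0$ for $i\neq g$ and $R^gp_{\hat A,*}\sP\cong k(0_{\hat A})$ — and to deduce all five displayed identities from it together with formal operations on Fourier--Mukai kernels: flat base change, the projection formula, the theorem of the cube (see-saw) for $\sP$, the defining property of the dual isogeny, and relative Grothendieck duality. As kernel preliminaries I would first record: $(\mathrm{id}_A\times T_x)^*\sP\cong\sP\otimes p_A^*\sP_x$ for $x\in\hat A$ by see-saw, and symmetrically on the other factor; $(\phi\times\mathrm{id}_{\hat A})^*\sP\cong(\mathrm{id}_A\times\hat\phi)^*\sP$ for an isogeny $\phi$ of $A$, which is the defining property of $\hat\phi$; $\sP^\vee\cong([-1]_A\times\mathrm{id})^*\sP\cong(\mathrm{id}\times[-1]_{\hat A})^*\sP$; and that the relative dualizing sheaf of each projection $A\times\hat A\to A$, $\to\hat A$ is trivial, the fibres being abelian varieties.

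Granting these, the translation identity $RS\circ T_x^*\cong(\otimes P_{-x})\circ RS$ comes from pulling the integrand $p_{\hat A}^*T_x^*(\_)\otimes\sP$ back along the automorphism $\mathrm{id}_A\times T_{-x}$ of $A\times\hat A$ (which fixes $p_A$), using the see-saw identity to turn $\sP$ into $\sP\otimes p_A^*\sP_{-x}$, and then the projection formula. The isogeny identities $\phi^*\circ RS=RS\circ\hat\phi_*$ and $\psi_*\circ RS=RS\circ\hat\psi^*$ follow from flat base change along the Cartesian squares built from $\phi$ (resp.\ $\hat\phi$), the projection formula, and the kernel identity $(\phi\times\mathrm{id})^*\sP\cong(\mathrm{id}\times\hat\phi)^*\sP$. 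Finally $D_A\circ RS\cong((-1_A)^*\circ RS\circ D_{\hat A})[g]$ is relative Grothendieck duality for the smooth proper map $p_A\colon A\times\hat A\to A$ of relative dimension $g$ with trivial relative dualizing sheaf, once one commutes $D$ past $p_{\hat A}^*(\_)\otimes\sP$: this replaces $\sP$ by $\sP^\vee=(\mathrm{id}\times[-1]_{\hat A})^*\sP$, which accounts for the $(-1)^*$ and the shift.

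For the two inversion formulas I would compose the functors at the level of kernels: $R\hat S\circ RS$ is the integral functor on $\hat A\times\hat A$ with kernel $\mathcal Q=Rp_{13,*}\big(p_{12}^*\sP\otimes p_{23}^*\sP\big)$ on $\hat A\times A\times\hat A$, whose fibre over $(y_1,y_2)$ is $R\Gamma(A,\sP_{y_1}\otimes\sP_{y_2})=R\Gamma(A,\sP_{y_1+y_2})$. Since a nontrivial line bundle in $\operatorname{Pic}^0(A)$ has no cohomology in any degree (Mumford vanishing), all cohomology sheaves of $\mathcal Q$ are supported on the antidiagonal $\Gamma_{[-1]}=\{(y,-y)\}\cong\hat A$; restricting $\mathcal Q$ over a slice $\{y_1\}\times\hat A$ identifies it, via the see-saw identity $p_A^*\sP_{y_1}\otimes\sP\cong(\mathrm{id}\times T_{y_1})^*\sP$ and flat base change, with $T_{y_1}^*R\hat S(\sO_A)$. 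The index-$g$ computation then gives $\mathcal Q|_{\{y_1\}\times\hat A}\cong k(-y_1)[-g]$ of total length one for every $y_1$, which (ruling out infinitesimal thickenings) forces $\mathcal Q$ to be the pushforward of a line bundle $\mathcal N$ on $\Gamma_{[-1]}$ placed in degree $g$; evaluating the composite on $\sO_{\hat A}$ and using $\sP|_{0\times\hat A}=\sO_{\hat A}$ shows $\mathcal N\cong\sO$, so $R\hat S\circ RS\cong(-1_{\hat A})^*[-g]$, and the symmetric computation — or applying this one twice, $(-1)^*$ being an involution — gives $RS\circ R\hat S\cong(-1_A)^*[-g]$.

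The one genuine obstacle is the index-$g$ vanishing $R\hat S(\sO_A)\cong k(0_{\hat A})[-g]$: because $H^\bullet(A,\sO_A)=\wedge^\bullet H^1(A,\sO_A)$ jumps at the origin, one must show carefully that no lower higher-direct-image $R^ip_{\hat A,*}\sP$ with $i<g$ survives. I would argue that the perfect complex universally computing $Rp_{\hat A,*}\sP$ is exact away from $0_{\hat A}$ by the Mumford vanishing above, use $\chi(\sP_y)=0$ to control the Euler characteristic, and invoke relative Serre duality (trivial relative $\omega$) to place the surviving cohomology in top degree and to see that $R^gp_{\hat A,*}\sP$ has length one at the origin. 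Everything downstream of this is base change, the projection formula, see-saw, and duality.
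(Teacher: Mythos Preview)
The paper does not give its own proof of this theorem: it is stated as a recalled result with a citation to Mukai \cite{Mukai81}, and no argument appears in the text. There is therefore nothing to compare your proposal against on the paper's side.

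That said, your outline is a faithful reconstruction of Mukai's original proof. The decomposition you give --- isolating the single nontrivial input $R\hat S(\sO_A)\cong k(0_{\hat A})[-g]$ and deriving the remaining identities from see-saw, flat base change, the projection formula, the defining property of the dual isogeny, and relative Grothendieck duality --- is exactly the structure of \cite{Mukai81}. The kernel computation for the inversion formula via $Rp_{13,*}(p_{12}^*\sP\otimes p_{23}^*\sP)$ supported on the antidiagonal is Mukai's argument as well. Your sketch of the index-$g$ vanishing is the one place that is slightly thin: the Euler characteristic $\chi(\sP_y)=0$ by itself does not pin down the length at the origin, and relative Serre duality only gives a symmetry $R^ip_{\hat A,*}\sP\cong(R^{g-i}p_{\hat A,*}\sP^\vee)^\vee$ rather than forcing concentration in degree $g$. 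Mukai's own route is to pull back along the addition map and use $(1\times m)^*\sP\cong p_{12}^*\sP\otimes p_{13}^*\sP$ together with base change to compute the convolution kernel directly; alternatively one can test against an ample line bundle to see that the total length of $R\hat S(\sO_A)$ is one. Either of these would close the gap, and the rest of your plan goes through.
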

\subsection{Truncation functors}
We recall some results from \cite[\S 3.3]{Milicic}.
Let $\mathcal A$ be an abelian category and $ A ^\bullet$ be a complex of objects of $\mathcal A$, then there are two natural truncations given by 
\[ \tau _{\leq n}(A ^\bullet)^p=
\left\{
\begin{array}{ll}
A^p &  {\rm for}\ p<n \\
\ker d^n  &  {\rm for}\ p=n\\
0  &  {\rm for}\  p>n
\end{array}
\right.
\]
\[\tau _{\geq n}
(A ^\bullet)^p=
\left\{
\begin{array}{ll}
0 & {\rm for}\ p<n \\
\coker d^{n-1} & {\rm for}\ p=n \\
A^p & {\rm for}\ p>n.
\end{array}
\right.
\]
These truncations induce functors on the derived category \[\tau _{\geq n}:D(\mathcal A)\to D^-(\mathcal A),\qquad {\rm and} \qquad \tau _{\leq n}:D(\mathcal A)\to D^+(\mathcal A).\]
For every $X \in D(\mathcal A )$, we have morphisms in $D(\mathcal A )$
\[ i:\tau _{\leq n}( X)\to X,\qquad {\rm and}\ q:X\to \tau _{\geq n}( X)\] 
such that the induced maps $\mathcal H^p(X)\to \mathcal H^p( \tau _{\geq n}( X))$ is an isomorphism if $p\geq n$ and zero otherwise and 
$\mathcal H^p(\tau _{\leq n}( X))\to \mathcal H^p(X)$  is an isomorphism if $p\leq n$ and zero otherwise.
For $ A ^\bullet$ as above, there is a short exact sequence of complexes
\[ 0\to  \tau _{\leq n}(A ^\bullet)\to  A ^\bullet\to Q^\bullet \to 0,\]
where $Q^i=0,\ {\rm coim}(d^n),\ A^p,$ if $p<n,\ p=n,\ p>n$. There is an induced projection $Q^\bullet \to\tau _{\geq n+1}(Q^\bullet )=\tau _{\geq n+1}(A^\bullet )$. We have \cite[Proposition 3.6.1]{Milicic} 
\begin{proposition} There exists a unique morphism $h: \tau _{\geq n+1}(A^\bullet )\to \tau _{\leq n}(A^\bullet )[1]$ inducing an exact triangle in $D(\mathcal A)$
\[
\xymatrix{
\tau _{\leq n}(A^\bullet )\ar[r] &  A^\bullet \ar[r] &  \tau _{\geq n+1}(A^\bullet ) \ar[r]^(0.8){+1}_(0.8)h &
}\]\end{proposition}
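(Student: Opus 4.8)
The plan is to construct the morphism $h$ using the octahedral axiom applied to the short exact sequence of complexes $0 \to \tau_{\leq n}(A^\bullet) \to A^\bullet \to Q^\bullet \to 0$ together with the natural quasi-isomorphism $Q^\bullet \xrightarrow{\sim} \tau_{\geq n+1}(A^\bullet)$. First I would record the exact triangle $\tau_{\leq n}(A^\bullet) \to A^\bullet \to Q^\bullet \xrightarrow{+1}$ coming from the short exact sequence of complexes (this is standard: any termwise-exact sequence of complexes gives a triangle in $D(\mathcal{A})$, with connecting morphism $Q^\bullet \to \tau_{\leq n}(A^\bullet)[1]$). Next I would check that the natural projection $Q^\bullet \to \tau_{\geq n+1}(A^\bullet)$ described in the excerpt is a quasi-isomorphism: on cohomology in degrees $> n+1$ both are $A^p$ with the same differentials; in degree $n+1$ one computes $\mathcal{H}^{n+1}(Q^\bullet) = \ker(d^{n+1})/\operatorname{im}(d^n)$ since $Q^n = \operatorname{coim}(d^n) = A^n/\ker d^n$ injects via $d^n$, and this agrees with $\mathcal{H}^{n+1}(\tau_{\geq n+1}(A^\bullet))$; in lower degrees both vanish. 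Composing the connecting morphism with the inverse of this quasi-isomorphism in $D(\mathcal{A})$ produces a morphism $h : \tau_{\geq n+1}(A^\bullet) \to \tau_{\leq n}(A^\bullet)[1]$, and rotating the original triangle yields the asserted exact triangle
\[
\tau_{\leq n}(A^\bullet) \to A^\bullet \to \tau_{\geq n+1}(A^\bullet) \xrightarrow{+1} .
\]

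For uniqueness, I would use the long exact sequence obtained by applying $\Hom_{D(\mathcal{A})}(\tau_{\geq n+1}(A^\bullet), -)$ to the triangle $\tau_{\leq n}(A^\bullet) \to A^\bullet \to \tau_{\geq n+1}(A^\bullet)$. Two choices $h, h'$ of such a morphism differ by an element of $\Hom_{D(\mathcal{A})}(\tau_{\geq n+1}(A^\bullet), A^\bullet)$ pushed forward into $\Hom(\tau_{\geq n+1}(A^\bullet), \tau_{\leq n}(A^\bullet)[1])$; so it suffices to show that the map $\Hom(\tau_{\geq n+1}(A^\bullet), A^\bullet) \to \Hom(\tau_{\geq n+1}(A^\bullet), \tau_{\geq n+1}(A^\bullet))$ is surjective, equivalently that $\Hom(\tau_{\geq n+1}(A^\bullet), \tau_{\leq n}(A^\bullet)) = 0$. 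This last vanishing is the key point: an object supported cohomologically in degrees $\geq n+1$ admits no nonzero maps in $D(\mathcal{A})$ to an object supported in degrees $\leq n$. I would deduce it from the standard $t$-structure truncation formalism — for $X \in D^{\geq n+1}$ and $Y \in D^{\leq n}$ one has $\Hom_{D(\mathcal{A})}(X, Y) = 0$ — or, if one wants to stay elementary, by a direct spectral sequence / hypercohomology argument computing $\Hom_{D(\mathcal{A})}(X,Y)$ via $\Ext$-groups $\Ext^q(\mathcal{H}^p(X), \mathcal{H}^{p'}(Y))$, all of which vanish for degree reasons.

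I expect the main obstacle to be purely bookkeeping rather than conceptual: one has to be careful that $Q^\bullet \to \tau_{\geq n+1}(A^\bullet)$ is an honest quasi-isomorphism of complexes (not merely a map inducing iso on $\mathcal{H}^{n+1}$) and that the various identifications $Q^p = A^p$ for $p > n$, $Q^n = \operatorname{coim}(d^n)$, $Q^p = 0$ for $p < n$ are compatible with differentials, so that inverting it in the derived category is legitimate. The uniqueness half rests entirely on the orthogonality $\Hom_{D(\mathcal{A})}(D^{\geq n+1}, D^{\leq n}) = 0$, which is exactly \cite[\S 3.3]{Milicic}'s input and can be cited; everything else is a formal manipulation of triangles via the octahedral axiom.
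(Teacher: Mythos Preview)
The paper does not give its own proof of this proposition; it simply records it as \cite[Proposition 3.6.1]{Milicic}. Your existence argument is correct and is the standard one: the short exact sequence of complexes produces a distinguished triangle with third vertex $Q^\bullet$, and the projection $Q^\bullet\to\tau_{\ge n+1}(A^\bullet)$ is a quasi-isomorphism (your cohomology check is fine), so one may replace $Q^\bullet$ by $\tau_{\ge n+1}(A^\bullet)$ in $D(\mathcal A)$. The octahedral axiom is not actually needed here.

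Your uniqueness argument, however, rests on a false vanishing. You assert that $\Hom_{D(\mathcal A)}(X,Y)=0$ for $X\in D^{\ge n+1}$ and $Y\in D^{\le n}$, attributing this to the $t$-structure formalism. That is the wrong direction: taking $X=M[-(n+1)]$ and $Y=N[-n]$ with $M,N\in\mathcal A$ gives $\Hom_{D(\mathcal A)}(X,Y)\cong\Ext^1_{\mathcal A}(M,N)$, which is generally nonzero. The $t$-structure axiom is the \emph{opposite} orthogonality, $\Hom_{D(\mathcal A)}(D^{\le n},D^{\ge n+1})=0$. Relatedly, your ``equivalently'' is off: vanishing of $\Hom(\tau_{\ge n+1},\tau_{\le n})$ would make $q_*$ injective, not surjective, and in any case surjectivity of $q_*$ would force $h=h_*(\id)=0$, which is absurd.

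The fix is to use the correct orthogonality. Given two distinguished triangles with the same $i,q$ but connecting maps $h,h'$, axiom TR3 supplies $c\colon\tau_{\ge n+1}(A^\bullet)\to\tau_{\ge n+1}(A^\bullet)$ with $c\circ q=q$ and $h'\circ c=h$. Apply $\Hom_{D(\mathcal A)}(-,\tau_{\ge n+1}(A^\bullet))$ to the triangle: since $\Hom(\tau_{\le n}(A^\bullet)[1],\tau_{\ge n+1}(A^\bullet))=0$ and $\Hom(\tau_{\le n}(A^\bullet),\tau_{\ge n+1}(A^\bullet))=0$ (both instances of $\Hom(D^{\le n},D^{\ge n+1})=0$), the map
\[
q^*\colon\Hom(\tau_{\ge n+1}(A^\bullet),\tau_{\ge n+1}(A^\bullet))\longrightarrow\Hom(A^\bullet,\tau_{\ge n+1}(A^\bullet))
\]
is a bijection. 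From $q^*(c)=c\circ q=q=q^*(\id)$ one concludes $c=\id$, hence $h'=h$.
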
 
\begin{lemma}\label{l-tt} Let $X$ be an object of $D(\mathcal A)$. There is an exact triangle
\[ 
\xymatrix{
\tau _{\leq m}(X)\ar[r] & \tau _{\leq m+1}(X)\ar[r] & \mathcal H^{m+1}(X)[-m-1] \ar[r]^(0.9){+1} &
}
\]
\end{lemma}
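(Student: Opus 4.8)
The plan is to obtain this triangle from the exact triangle recalled in the preceding Proposition (\cite[Proposition 3.6.1]{Milicic}), applied not to $X$ itself but to its truncation $\tau_{\leq m+1}(X)$. Fix a complex $A^\bullet$ of objects of $\mathcal A$ representing $X$ and set $Y^\bullet:=\tau_{\leq m+1}(A^\bullet)$, a representative of $\tau_{\leq m+1}(X)$. Applying that Proposition to the complex $Y^\bullet$ with index $n=m$ yields an exact triangle
\[
\xymatrix{
\tau_{\leq m}(Y^\bullet)\ar[r] & Y^\bullet \ar[r] & \tau_{\geq m+1}(Y^\bullet)\ar[r]^(0.9){+1} &
}
\]
in $D(\mathcal A)$, so it remains only to identify the two outer terms with $\tau_{\leq m}(X)$ and $\mathcal H^{m+1}(X)[-m-1]$.

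First I would observe that $\tau_{\leq m}(Y^\bullet)=\tau_{\leq m}(A^\bullet)$ as complexes: in degrees $<m$ both coincide with $A^\bullet$, in degrees $>m$ both vanish, and in degree $m$ the differential $d^m_{Y^\bullet}\colon Y^m\to Y^{m+1}$ is the unique factorization of $d^m_{A^\bullet}\colon A^m\to A^{m+1}$ through the subobject $\ker d^{m+1}_{A^\bullet}\subseteq A^{m+1}$, hence has the same kernel as $d^m_{A^\bullet}$. Therefore $\tau_{\leq m}(Y^\bullet)$ represents $\tau_{\leq m}(X)$. Second, since $Y^p=0$ for $p\geq m+2$, the complex $\tau_{\geq m+1}(Y^\bullet)$ is concentrated in the single degree $m+1$, where it equals $\coker\big(d^m\colon Y^m\to Y^{m+1}\big)=\mathcal H^{m+1}(Y^\bullet)$; and $\mathcal H^{m+1}(Y^\bullet)=\mathcal H^{m+1}(X)$ because $Y^\bullet=\tau_{\leq m+1}(A^\bullet)$ agrees with $X$ on cohomology in degrees $\leq m+1$. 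Thus $\tau_{\geq m+1}(Y^\bullet)$ is literally the complex with $\mathcal H^{m+1}(X)$ placed in degree $m+1$, i.e. $\mathcal H^{m+1}(X)[-m-1]$.

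Substituting these two identifications into the triangle above gives exactly the asserted exact triangle. Since the argument only combines the already-established Proposition with a direct inspection of the explicit truncation complexes, I do not expect any genuine obstacle; the only mildly delicate point is compatibility of the identifications with the maps in the triangle, but for the existence statement claimed here it suffices that the outer terms equal the stated objects, which is precisely what the above equalities of complexes provide.
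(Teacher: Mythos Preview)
Your argument is correct. The approach differs slightly from the paper's: the paper starts from the morphism $\tau_{\leq m}(X)\to\tau_{\leq m+1}(X)$ (obtained by applying $\tau_{\leq m+1}$ to $\tau_{\leq m}(X)\to X$), completes it to a distinguished triangle with an abstract cone $Y$, and then identifies $Y\cong\mathcal H^{m+1}(X)[-m-1]$ by running through the long exact sequence in cohomology. You instead apply the preceding Proposition directly to a complex representing $\tau_{\leq m+1}(X)$ and identify the outer terms on the nose as complexes, which avoids the cohomology computation for the cone. Both routes are short; yours is a bit more concrete at the level of complexes, while the paper's works entirely in $D(\mathcal A)$ and would go through verbatim in any setting where one only has the truncation functors and their formal properties.
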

\begin{proof} Applying $\tau _{\leq m+1}$ to the morphism $\tau _{\leq m}(X)\to X$ we obtain a morphism $\tau _{\leq m+1}(\tau _{\leq m}(X))\to \tau _{\leq m+1}(X)$. Since  $\tau _{\leq m+1}(\tau _{\leq m}(X))=\tau _{\leq m}(X)$,
we have a triangle $\tau _{\leq m}(X)\to \tau _{\leq m+1}(X)\to Y$. Applying $\mathcal H^\bullet$, since $\mathcal H^p(\tau _{\leq m}(X))\to \mathcal H^p(\tau _{\leq m+1}(X))$ is an isomorphism for $p\ne m+1$, we obtain $\mathcal H^p(Y)=0$ for $p\neq m+1$ and $\mathcal H^{m+1}(Y)=\mathcal H^{m+1}(X)$ so that $Y$ is quasi isomorphic to $\mathcal H^{m+1}(X)[-m-1]$.
\end{proof}

\section{Nilpotent direct systems in the derived category}
\label{sec:nilpotent_systems}

This section is a collection of lemmas about nilpotence behavior of direct and inverse systems in the bounded derived category of coherent sheaves. These statements will be used extensively in Section \ref{sec:Frob_stab_coho_support} and \ref{sec:V_modules_examples}. We will consider arbitrary direct systems of the above type, but for inverse systems we make the assumption that they are associated to (derived) Cartier modules. 
We recall that a {Cartier module} on a scheme $X$ over $k$ is a pair $(\sM, \phi)$, where $\phi$ is an $\sO_X$-homomorphism $F^s_* \sM \to \sM$ for some integer $s>0$. 

\begin{definition}
\label{def:derived_Cartier_modules}
Let $X$ be a variety over $k$. 
An inverse system $(\sD_e, \  \alpha_e: \sD_{e+1} \to \sD_e)_{e\geq 0}$ in $D^b_c(X)$ is \emph{associated to a derived Cartier module}, if there is an integer $s>0$ such that $\sD_e \cong F^{e s}_* \sD_0$ and via these isomorphisms $\alpha_e$  identifies with $ F^{e  s}_* ( \alpha_0)$. 
\end{definition}

\begin{remark}
\label{rem:derived_Cartier_module}
In the situation of Definition \ref{def:derived_Cartier_modules}, 
as $F$ is affine, for each integer $i>0$, the inverse system $(\sH^i(\sD_e), \sH^i(\alpha_e))_{e \geq 0}$ comes from the iterations of a Cartier module. Hence, for every integer $e \geq 0$ the images $\im( \sH^i(\sD_{e'}) \to \sH^i(\sD_e) )$ stabilize for $e' \gg 0$ \cite[Proposition 8.1.4]{BS13}.
\end{remark}

\begin{definition}
Let $X$ be a variety over $k$. 
A direct system $(\sC_e; \alpha_e: \sC_e \to \sC_{e+1})_{e \geq 0}$  in $D^b_c(X)$ is \emph{nilpotent in cohomological degree $i$}, if for every integer $e>0$ the homomorphism $\sH^i(\sC_e) \to \sH^i (\sC_{e'})$ is zero for $e' \gg e$. Note that as $\sH^i(\sC_e)$ is Noetherian, the images $\sH^i(\sC_e) \to \sH^i (\sC_{e'})$ stabilize for $e' \gg 0$, and hence being nilpotent in cohomological degree $i$ is equivalent to $\varinjlim_e \sH^i(\sC_e)=0$. 

The same direct system is \emph{nilpotent  outside of cohomological degree $i$}, if it is nilpotent in each cohomological degree $j \neq i$.

The same direct system is \emph{nilpotent} if for every integer $e>0$ there is an integer $e' \geq e$ such that $\sC_e \to \sC_{e'}$ is zero (where the bigness of $e'$ can depend on $e$). 

Similarly, an inverse system $(\sD_e: \alpha_e: \sD_{e+1} \to \sD_e)$  associated to a derived Cartier module is nilpotent in cohomological degree $i$ if  for every integer $e>0$, the homomorphism   $\sH^i(\sD_{e'}) \to \sH^i(\sD_e) $ is zero for every $e' \gg e$. According to Remark \ref{rem:derived_Cartier_module}, this is equivalent to $\varprojlim_e \sH^i(\sD_e)=0$. Being nilpotent outside of  cohomological degree $i$ and being nilpotent is then defined analogously as above for direct systems. 
\end{definition}

\begin{lemma}
\label{lem:coho_zero_implies_zero}
Let $X$ be a variety over $k$, and let $\big( \sF_e \big)_{\geq 0}$ be a direct system in $D^b_c(X)$, which is nilpotent in all cohomological degrees, and for which  there are integers $c \leq d $ such that $\sH^i(\sF_e) = 0$ for all $e \in \bN$ whenever $i< c$ or $i>d$. Then, $\big( \sF_e \big)_{\geq 0}$ is nilpotent.
\end{lemma}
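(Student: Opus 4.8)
The plan is to induct on the cohomological amplitude $d - c$, using the truncation triangles of Section \ref{S-P} to peel off the top (or bottom) cohomology sheaf one degree at a time. The base case $d = c$ is immediate: then $\sF_e$ is (non-canonically, but functorially enough) quasi-isomorphic to $\sH^c(\sF_e)[-c]$, so a morphism $\sF_e \to \sF_{e'}$ in $D^b_c(X)$ is the same datum as a morphism $\sH^c(\sF_e) \to \sH^c(\sF_{e'})$, and the hypothesis of nilpotence in degree $c$ says exactly that this is zero for $e' \gg e$. Hence $(\sF_e)$ is nilpotent.

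For the inductive step, apply the truncation functor $\tau_{\leq d-1}$ (or $\tau_{\geq c+1}$) to the system. Concretely, for each $e$ Lemma \ref{l-tt} gives an exact triangle
\[
\tau_{\leq d-1}(\sF_e) \to \sF_e \to \sH^d(\sF_e)[-d] \xrightarrow{+1}
\]
and, since truncation is a functor on $D(\mathcal A)$, these fit into a morphism of direct systems of triangles indexed by $e$. The subsystem $\big(\tau_{\leq d-1}(\sF_e)\big)_e$ has cohomology sheaves vanishing outside $[c, d-1]$ and is still nilpotent in every cohomological degree (its cohomology sheaves in degrees $\leq d-1$ agree with those of $\sF_e$, and the transition maps agree too), so by induction it is nilpotent. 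The quotient system $\big(\sH^d(\sF_e)[-d]\big)_e$ is nilpotent by hypothesis (nilpotence in degree $d$, handled as in the base case). It then remains to conclude that an extension — in the sense of a direct system of exact triangles — of two nilpotent direct systems is nilpotent.

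That last point is the technical heart, and where I expect the only real friction. The statement to establish is: given a direct system of exact triangles $\sG_e \to \sF_e \to \sH_e \xrightarrow{+1}$ with $(\sG_e)$ and $(\sH_e)$ both nilpotent, the system $(\sF_e)$ is nilpotent. The argument is the standard octahedron/composition trick: fix $e$; choose $e_1 \geq e$ with $\sH_e \to \sH_{e_1}$ zero; then the composite $\sF_e \to \sF_{e_1}$ factors (using that the cone of $\sF_e \to \sH_e$ is $\sG_e[1]$, and that a map killed after post-composition to $\sH_{e_1}$ lifts through $\sG_{e_1}$) through $\sG_{e_1}[1]$, i.e. through the image of $\sG_{e_1}$; then choose $e_2 \geq e_1$ with $\sG_{e_1} \to \sG_{e_2}$ zero, and deduce $\sF_e \to \sF_{e_2}$ is zero. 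The subtlety is purely diagram-chasing in the triangulated category $D^b_c(X)$: one must check that the relevant lift and the relevant vanishing compose correctly, using the long exact sequences $\Hom(\sF_e, \sG_{e'}) \to \Hom(\sF_e, \sF_{e'}) \to \Hom(\sF_e, \sH_{e'})$ and $\Hom(\sG_{e'}[1], \sF_{e''}) \to \Hom(\sF_e[1] \text{-shift stuff})$, together with functoriality of the triangles in $e$. I would isolate this extension-closure fact as its own sub-lemma, since it is used again (for inverse systems associated to derived Cartier modules) in the sequel, and then the induction above closes the proof cleanly.
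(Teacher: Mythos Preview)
Your proposal is correct and follows essentially the same argument as the paper: induction on $d-c$, peeling off the top cohomology via the truncation triangle, lifting $\sF_e \to \sF_{e'}$ through $\tau_{\leq d-1}\sF_{e'}$ using the long exact $\Hom$-sequence once the map to $\sH^d[-d]$ is killed, and then applying the inductive hypothesis to the truncated system. The only difference is organizational---you package the ``lift, then kill'' step as a standalone extension-closure sub-lemma for direct systems of triangles, whereas the paper carries out that step inline; both are valid and the underlying diagram chase is identical.
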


\begin{proof}
Let $\phi_{e,s}: \sF_e \to \sF_{s}$ be the structural homomorphism.
The proof is by induction on $j:=d-c$. For $d=c$, we have $\sH^c(\sF_e)[-c] \cong \sF_e$. Hence, in this case, for all $s\gg e$ we have that $\phi_{e,s}$ agrees as an arrow of $D(X)$ with the shift by $-c$ of the zero  homomorphism $\sH^c(\sF_e) \to \sH^c(\sF_{s})$, which is zero  in $D(X)$. 

So, let us assume that  $j>0$ and that we know the statement for $j$ replaced by $j-1$. In the rest of the proof we prove the statement for $j$ under these assumptions. Note first that the composition $\sF_e \to \sF_{s} \to \sH^d \left(\sF_{s}\right)[-d]$ agrees with the composition $\sF_e \to  \sH^d \left(\sF_{e}\right)[-d] \to  \sH^d \left(\sF_{s}\right)[-d]$. As the second map is zero in the latter composition for $s \gg e$, we obtain that both compositions are zero in $D(X)$ for $s \gg e$. Fix such an integer $s>e$.
Consider now the distinguished triangle
\begin{equation*}
\xymatrix{
\tau_{\leq d-1} \sF_{s} \ar[r]   & 
\sF_{s} \ar[r] & 
\sH^d \left(  \sF_{s}  \right) [-d] \ar[r]^(0.9){+1} & .
}
\end{equation*}
Applying $\Hom_X\left( \sF_e, \_ \right)$ we obtain an exact sequence, where the image of $\phi_{e,s}$ on the right is zero by the above discussion.
\begin{equation}
\label{eq:coho_zero_implies_zero:exact}
\xymatrix@R=5pt{
\Hom_X\left( \sF_e, \tau_{\leq d-1} \sF_{s}  \right) \ar[r]   & 
\Hom_X\left( \sF_e,\sF_{s} \right) \ar[r] & 
\Hom_X\left( \sF_e, \sH^d \left(  \sF_{s}  \right)  [-d]\right) \\
&
\phi_{e,s} \ar@{|->}[r] \ar@{}[u]|{\rotatebox{90}{$\in$}} & 
0 \ar@{}[u]|{\rotatebox{90}{$\in$}}   .
}
\end{equation}
By the exact sequence \eqref{eq:coho_zero_implies_zero:exact}, we obtain that  $\phi_{e,s}$ descends to an arrow $\widetilde{\phi}_{e,s} : \sF_e \to \tau_{\leq d-1} \sF_{s}$ in $D(X)$. By our induction hypothesis, $\tau_{\leq d-1} \phi_{s,e'} : \tau_{\leq d-1} \sF_{s} \to \tau_{\leq d-1} \sF_{e'}$ is zero for $e' \gg s$. Hence, 
the following diagram then concludes our proof for $e' \gg s$:
\begin{equation*}
\xymatrix@C=70pt@R=10pt{
\ar@/_2.2pc/[drr]_(0.2){\phi_{e,e'}} \sF_e \ar[r]^{\widetilde{\phi}_{e,s}} \ar[dr]|{\phi_{e,s}} & \tau_{\leq d-1} \sF_{s} \ar[d]  \ar[r]^{\tau_{\leq d-1}\phi_{s,e'}=0} & \tau_{\leq d-1} \sF_{e'} \ar[d] \\
& \sF_s \ar[r]|{\phi_{s,e'}} & \sF_{e'}
}
\end{equation*}
\end{proof}

\begin{lemma}
\label{lem:nilpotent_not_nilpotent}
Let $(\sC_e)_{e \geq 0}$, $(\sD_e)_{e \geq 0}$, $(\sE_e)_{e \geq 0}$ be direct systems in $D^b_c(X)$ that form an exact triangle 
\begin{equation}
\label{eq:nilpotent_not_nilpotent:statement}
\xymatrix{
\sC_{\bullet} \ar[r] & \sD_{\bullet} \ar[r] & \sE_{\bullet} \ar[r]^{+1} &
}
\end{equation}
(That is, we have an exact triangle as above for each $\bullet = e \geq 0$, which commute with the structure maps of the systems.) 

Then,
\begin{enumerate}
\item \label{itm:nilpotent_not_nilpotent:nilpotent} if both $(\sC_e)_{e \geq 0}$ and $(\sE_e)_{e \geq 0}$ are nilpotent in cohomological degree $i$, then so is $(\sD_e)_{e \geq 0}$, and
\item \label{itm:nilpotent_not_nilpotent:not_nilpotent} if $(\sE_e)_{e \geq 0}$ is not nilpotent in cohomological degree $i$, and $(\sC_e)_{e \geq 0}$ is nilpotent in cohomological degree $i+1$, then $(\sD_e)_{e \geq 0}$ is not nilpotent in cohomological degree $i$.
\end{enumerate}

\end{lemma}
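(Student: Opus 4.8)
The plan is to reduce everything to the long exact cohomology sequence of the exact triangle \eqref{eq:nilpotent_not_nilpotent:statement} together with the exactness of filtered colimits. Recall from the definition preceding the lemma that a direct system $(\sG_e)_{e \geq 0}$ in $D^b_c(X)$ is nilpotent in cohomological degree $i$ precisely when $\varinjlim_e \sH^i(\sG_e) = 0$. Hence both assertions are statements about the vanishing, respectively non-vanishing, of certain filtered colimits of cohomology sheaves, and the coherence issue (these colimits need not be coherent) plays no role, since we may work in the abelian category of $\sO_X$-modules once we have passed to cohomology.

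First I would apply the cohomology functors $\sH^\bullet$ to the triangle \eqref{eq:nilpotent_not_nilpotent:statement} for each index $e \geq 0$, obtaining the long exact sequences $\cdots \to \sH^i(\sC_e) \to \sH^i(\sD_e) \to \sH^i(\sE_e) \to \sH^{i+1}(\sC_e) \to \cdots$. By the naturality of the connecting homomorphism, these are compatible with the structure maps of the three systems, so they assemble into a long exact sequence of direct systems indexed by $e$. Applying $\varinjlim_e$, and using that filtered colimits of $\sO_X$-modules are exact, I get a long exact sequence $\cdots \to \varinjlim_e \sH^i(\sC_e) \to \varinjlim_e \sH^i(\sD_e) \to \varinjlim_e \sH^i(\sE_e) \to \varinjlim_e \sH^{i+1}(\sC_e) \to \cdots$.

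With this sequence in hand, part \eqref{itm:nilpotent_not_nilpotent:nilpotent} is immediate: the hypothesis says $\varinjlim_e \sH^i(\sC_e) = 0$ and $\varinjlim_e \sH^i(\sE_e) = 0$, and exactness at $\varinjlim_e \sH^i(\sD_e)$ then forces this middle term to vanish, which is exactly nilpotence of $(\sD_e)_{e \geq 0}$ in cohomological degree $i$. For part \eqref{itm:nilpotent_not_nilpotent:not_nilpotent} I would use the three-term segment $\varinjlim_e \sH^i(\sD_e) \to \varinjlim_e \sH^i(\sE_e) \to \varinjlim_e \sH^{i+1}(\sC_e)$: the rightmost term is zero because $(\sC_e)_{e \geq 0}$ is nilpotent in cohomological degree $i+1$, so the first map is surjective; since its target $\varinjlim_e \sH^i(\sE_e)$ is nonzero by assumption, the source $\varinjlim_e \sH^i(\sD_e)$ must be nonzero, i.e. $(\sD_e)_{e \geq 0}$ is not nilpotent in cohomological degree $i$.

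There is no real obstacle here; the argument is a one-line diagram chase in each case. The only two points that need care are (i) that the long exact sequences for the various $e$ are genuinely morphisms of direct systems, which is just the functoriality of the long exact sequence of a distinguished triangle, and (ii) that $\varinjlim_e$ carries this long exact sequence of systems to a long exact sequence, which is the standard exactness of filtered colimits in a module category. Once these are granted, the conclusions follow by exactness at the relevant spot.
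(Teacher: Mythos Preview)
Your proof is correct and follows essentially the same approach as the paper: pass to the long exact sequence of cohomology sheaves, take the filtered colimit (which is exact), and read off both conclusions from the resulting long exact sequence of direct limits. The paper cites \cite[\href{https://stacks.math.columbia.edu/tag/04B0}{Tag 04B0}]{stacks-project} for the exactness of filtered colimits but is otherwise identical in structure and level of detail.
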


\begin{proof}
Passing to cohomology sheaves we obtain a long exact sequence of direct systems
\begin{equation*}
\xymatrix{
 \dots \ar[r] & \big(\sH^{i-1}(\sE_e) \big)_{e \geq 0} \ar[r] & 
\big( \sH^{i}(\sC_e) \big)_{e \geq 0} \ar[r] & 
\big( \sH^{i}(\sD_e)\big)_{e \geq 0} \ar[r] & \qquad \qquad \quad \\
& & \qquad \qquad \quad \ar[r] &
\big( \sH^{i}(\sE_e) \big)_{e \geq 0} \ar[r] & 
 \big( \sH^{i+1}(\sC_e) \big)_{e \geq 0} \ar[r] & \dots
}
\end{equation*}
As taking direct limit is exact \cite[\href{https://stacks.math.columbia.edu/tag/04B0}{Tag 04B0}]{stacks-project}, we obtain a long exact sequence
\begin{equation}
\label{eq:nilpotent_not_nilpotent:direct_limit}
\xymatrix{
\dots \ar[r] & \varinjlim_e \sH^{i-1}(\sE_e) \ar[r] & 
\varinjlim_e \sH^{i}(\sC_e) \ar[r] & 
\varinjlim_e \sH^{i}(\sD_e) \ar[r] & \qquad \qquad \quad \\
& & \qquad \qquad \quad \ar[r] &
\varinjlim_e \sH^{i}(\sE_e) \ar[r] & 
\varinjlim_e  \sH^{i+1}(\sC_e) \ar[r] & \dots
}
\end{equation}
As a direct system being nilpotent is equivalent to the corresponding direct limit being zero, we obtain both statements of the lemma directly from \eqref{eq:nilpotent_not_nilpotent:direct_limit}.

\end{proof}

\begin{proposition}
\label{prop:inverse_systems_map}
For a variety $X$ over $k$, let $\big(   \sF_e \big)_{e \geq 0}$ be a direct system of complexes of coherent sheaves, such that:
\begin{enumerate}
\item \label{itm:inverse_systems_map:positive_zero} $\sH^i(\sF_e)=0$ for $i>0$, 
\item  there is $0\geq c \in \bZ$ such that $\sH^i(\sF_e) = 0$ whenever $i< c$, and
\item  \label{itm:inverse_systems_map:limit_zero} $\big( \sF_e\big)_{e \geq 0}$ is nilpotent in cohomological degrees $i < 0$.
\end{enumerate}
Set $\sG_e:= \im \big( \sH^0(\sF_e) \to  \sH^0 \left( \sF_{s} \right) \big)$ for $s \gg e$. Then, for every integer $e>0$, and for every integer $e' \gg e$ (where the bigness depends on $e$), there is a homomorphism as shown by the dashed arrow of the following commutative diagram: 
\begin{equation}
\label{eq:inverse_systems_map:goal}
\xymatrix@C=80pt@R=10pt{
\sF_e \ar[r]^{\phi_{e,e'}} \ar[d]_{\alpha_e} & \sF_{e'} \ar[d]^{\alpha_{e'}} \\
\sG_e \ar[r]_{\eta_{e,e'}}  \ar@{-->}[ur] & \sG_{ e'}
}
\end{equation}
\end{proposition}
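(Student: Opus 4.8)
The plan is to factor the map $\phi_{e,e'} : \sF_e \to \sF_{e'}$ through $\sG_e$ in two stages: first factor it through the natural map $\alpha_e : \sF_e \to \sG_e$, then realize the composite with $\sG_e \to \sF_{e'}$ inside the diagram. The key structural input is the truncation triangle from Lemma~\ref{l-tt} (or the proposition preceding it), together with the nilpotence hypothesis \eqref{itm:inverse_systems_map:limit_zero} in negative degrees, which together will let me replace $\sF_{e'}$ by something whose only cohomology sits in degree $0$ after composing with a sufficiently late structure map. Concretely, since $\sH^i(\sF_{e'})=0$ for $i>0$ by \eqref{itm:inverse_systems_map:positive_zero}, we have $\sF_{e'} = \tau_{\leq 0}\sF_{e'}$, and there is a distinguished triangle
\begin{equation*}
\xymatrix{
\tau_{\leq -1}\sF_{e'} \ar[r] & \sF_{e'} \ar[r] & \sH^0(\sF_{e'})[0] \ar[r]^(0.85){+1} & .
}
\end{equation*}

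**Step 1: kill the negative truncation.**
First I would apply $\Hom_X(\sF_e, \_)$ to this triangle. The obstruction to lifting $\phi_{e,e'}$ along $\sF_{e'} \to \sH^0(\sF_{e'})$ — no wait, I want to push the other way: I want to show the composite $\sF_e \to \sF_{e'} \to \sH^0(\sF_{e'})[0]$ has the desired factorization, and that passing from $\sF_{e'}$ to $\sH^0(\sF_{e'})[0]$ loses nothing for $e'$ large. The point is that $\tau_{\leq -1}\sF_{e'}$ has cohomology only in degrees $c \le i \le -1$, and by \eqref{itm:inverse_systems_map:limit_zero} the direct system $(\tau_{\leq -1}\sF_e)_e$ is nilpotent in all its (finitely many, bounded) cohomological degrees, hence nilpotent as a system by Lemma~\ref{lem:coho_zero_implies_zero}. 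Therefore, by the triangle $\tau_{\leq -1}\sF_\bullet \to \sF_\bullet \to \sH^0(\sF_\bullet)[0]$ of direct systems and part~\eqref{itm:nilpotent_not_nilpotent:nilpotent}-style reasoning (more precisely, applying $\Hom_X(\sF_e,\_)$ to the triangle at level $e'$ and using $\Hom_X(\sF_e, \tau_{\leq -1}\sF_{e'})=0$ after composing with a late enough structure map, which follows from nilpotence of $(\tau_{\leq-1}\sF_e)_e$ exactly as in the proof of Lemma~\ref{lem:coho_zero_implies_zero}), I get: for $e' \gg e$ the map $\Hom_X(\sF_e, \sF_{e'}) \to \Hom_X(\sF_e, \sH^0(\sF_{e'})[0])$ is injective on the structure maps, i.e.\ $\phi_{e,e'}$ is determined by, and equals the pullback of, a unique arrow $\psi_{e,e'} : \sF_e \to \sH^0(\sF_{e'})[0]$. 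Since a morphism in $D^b_c(X)$ from $\sF_e$ to a sheaf placed in degree $0$ is the same as an $\sO_X$-linear map $\sH^0(\sF_e) \to \sH^0(\sF_{e'})$, this $\psi_{e,e'}$ is just $\sH^0(\phi_{e,e'})$ in degree $0$.

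**Step 2: land in $\sG_e$ and assemble the diagram.**
Now choose $e''$ with $e \ll e'' \ll e'$ large enough that simultaneously $\sG_e = \im(\sH^0(\sF_e)\to\sH^0(\sF_{e''}))$ has stabilized and the injectivity of Step~1 holds for the pair $(e, e')$. The sheaf map $\sH^0(\phi_{e,e'}) : \sH^0(\sF_e) \to \sH^0(\sF_{e'})$ factors through its image, which by the stabilization is exactly the canonical copy $\sG_e \hookrightarrow \sH^0(\sF_{e'})$; call the resulting surjection $\alpha_e : \sH^0(\sF_e) \twoheadrightarrow \sG_e$ (identifying $\sH^0(\sF_e)[0]\cong$ source via the fact that $\sF_e$ maps to its $\tau_{\leq 0}$, and noting the map $\sF_e \to \sH^0(\sF_e)[0]$ is the quotient $q$ of the truncation; precomposing gives $\alpha_e : \sF_e \to \sG_e$). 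The inclusion $\sG_e \hookrightarrow \sH^0(\sF_{e'}) \subseteq$ — and then $\sH^0(\sF_{e'})[0] \leftarrow \sF_{e'}$ is the wrong direction, so instead I use: $\sG_e$, being a subsheaf of $\sH^0(\sF_{e'})$, also maps to $\sG_{e'}$ via $\eta_{e,e'}$, and to get the dashed arrow $\sF_{e'} \to$ ... no: the dashed arrow goes $\sG_e \to \sF_{e'}$. Hmm — rather, the dashed arrow should be read as $\sF_{e'} \dashleftarrow \sG_e$ is wrong; it is $\sG_e \to \sF_{e'}$ making the upper triangle commute, i.e.\ $(\text{dashed})\circ \alpha_e = \phi_{e,e'}$. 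This we get directly from Step~1: $\phi_{e,e'}$ factors as $\sF_e \xrightarrow{\alpha_e} \sG_e$ followed by a map $\sG_e \to \sF_{e'}$, because $\phi_{e,e'}$ equals the composite $\sF_e \to \sH^0(\sF_e)[0] \to \sH^0(\sF_{e'})[0]$, the middle sheaf map kills everything outside $\sG_e$, and $\sG_e[0] \to \sH^0(\sF_{e'})[0] \to \sF_{e'}$ — again backwards. I will instead argue the factorization at the level of the sheaf map and then \emph{lift}: the composite $\sF_e \xrightarrow{\phi_{e,e'}} \sF_{e'} \xrightarrow{q_{e'}} \sH^0(\sF_{e'})[0]$ factors (in $D(X)$, as it is a map of complexes into a sheaf) through $\sG_e[0]$; call the factoring arrow $\bar\eta : \sG_e[0] \to \sH^0(\sF_{e'})[0]$, which is just the inclusion. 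Then since $q_{e'}$ is injective on $\Hom_X(\sF_e,\_)$-classes of structure maps for $e' \gg e$ (Step~1), and the diagram
\begin{equation*}
\xymatrix@C=50pt{
\sF_e \ar[r]^{\alpha_e} \ar[d]_{\phi_{e,e'}} & \sG_e[0] \ar@{-->}[dl] \ar[d] \\
\sF_{e'} \ar[r]_{q_{e'}} & \sH^0(\sF_{e'})[0]
}
\end{equation*}
has both composites to $\sH^0(\sF_{e'})[0]$ equal, the dashed arrow exists and makes the square \eqref{eq:inverse_systems_map:goal} commute; commutativity of the lower square $\alpha_{e'}\circ(\text{dashed}) = \eta_{e,e'}\circ\alpha_e$ then follows by applying $\alpha_{e'}$ and chasing, using that everything happens inside $\sH^0(\sF_{e'})$ and $\sH^0(\sF_{e'''})$.

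**Main obstacle.**
The delicate point — and the one I would be most careful about — is Step~1: making precise, without circularity, that for $e' \gg e$ the lift of $\phi_{e,e'}$ along the truncation triangle is \emph{canonical} (unique) so that it can legitimately be identified with $\sH^0(\phi_{e,e'})$ and so that the factorization descends compatibly with the structure maps of the direct system. This is exactly the phenomenon exploited in the proof of Lemma~\ref{lem:coho_zero_implies_zero}: the obstruction group is $\Hom_X(\sF_e, \tau_{\leq -1}\sF_{e'})$, and nilpotence of $(\tau_{\leq-1}\sF_\bullet)$ forces the relevant structure map into this group to vanish for $e' \gg e$, which simultaneously gives existence of the lift and enough uniqueness (after possibly enlarging $e'$ once more) to make the assignment $e \mapsto (\text{dashed arrow})$ compatible with composition. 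Everything else — identifying maps into a degree-$0$ sheaf with $\sO_X$-module maps on $\sH^0$, and the stabilization of the images $\sG_e$ which is guaranteed by Noetherianity as recorded in the definition of nilpotence — is routine.
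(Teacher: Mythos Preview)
Your approach is correct and genuinely different from the paper's. The paper works on the \emph{source} side: it forms the cone $\sK_e$ of $\alpha_e:\sF_e\to\sG_e$, proves the system $(\sK_e)_{e\geq 0}$ is nilpotent in every cohomological degree (the degree-$0$ case requires an explicit diagram chase using the definition of $\sG_e$ as a stabilized image), and then lifts $\phi_{e,e'}$ along $\alpha_e$ via the exact sequence $\Hom(\sG_e,\sF_{e'})\to\Hom(\sF_e,\sF_{e'})\to\Hom(\sK_e,\sF_{e'})$. You instead work on the \emph{target} side, via the truncation triangle $\tau_{\leq -1}\sF_{e'}\to\sF_{e'}\to\sH^0(\sF_{e'})$. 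The cleanest phrasing of your argument is this: since $\sG_e$ is a sheaf in degree $0$ and $\tau_{\leq -1}\sF_{e'}$ lives in degrees $\leq -1$, both $\Hom(\sG_e,\tau_{\leq -1}\sF_{e'})$ and $\Hom(\sG_e,\tau_{\leq -1}\sF_{e'}[1])$ vanish by the t-structure axioms, so the inclusion $\sG_e\hookrightarrow\sH^0(\sF_{e'})$ lifts \emph{uniquely} to some $\xi:\sG_e\to\sF_{e'}$; the difference $\phi_{e,e'}-\xi\circ\alpha_e$ then lies in the image of $\Hom(\sF_e,\tau_{\leq -1}\sF_{e'})$, and composing with the structure map $\sF_{e'}\to\sF_{e''}$ for $e''\gg e'$ kills it by Lemma~\ref{lem:coho_zero_implies_zero} applied to the bounded nilpotent system $(\tau_{\leq -1}\sF_\bullet)$. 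This is arguably shorter than the paper's route since it bypasses the degree-$0$ nilpotence of the cone entirely. Your exposition, however, repeatedly confuses injectivity with surjectivity (``$q_{e'}$ is injective on $\Hom_X(\sF_e,\_)$-classes of structure maps\dots\ the dashed arrow exists'' is backwards: existence of the lift comes from vanishing of the obstruction term for the sheaf $\sG_e$, while the \emph{upper}-triangle commutativity is what requires passing to larger $e''$ to kill the kernel), and the back-and-forth in Step~2 obscures what is, once $\xi$ is in hand, a two-line argument.
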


\begin{proof}
\fbox{{\scshape Step 1.}} \ul{\emph{It is enough to find a dashed arrow making the following diagram commute:}}
\begin{equation*}
\xymatrix@C=70pt@R=10pt{
\sF_e \ar[r]|{\phi_{e,e'}} \ar[d]_{\alpha_e} & \sF_{e'} \\
\sG_e   \ar@{-->}[ur]_{\xi_{e,e'}} & 
}
\end{equation*}
Indeed, then we have the commutativity of the following two diagrams:
\begin{equation}
\label{eq:inverse_systems_map:step_1_H_0}
\xymatrix@C=80pt{
\sH^0(\sF_e) \ar[r]|{\sH^0\left(\phi_{e,e'}\right)} \ar@{->>}[d]_{\sH^0\left(\alpha_{e}\right)}  & \sH^0(\sF_{e'})  \\
\sG_e   \ar@{-->}[ur]_{\sH^0\left(\xi_{e,e'} \right)}
}
\raisebox{-20pt}{\textrm{, \quad and \quad }}
\xymatrix@C=80pt{
\sH^0(\sF_e) \ar[r]|{\sH^0\left(\phi_{e,e'}\right)}  \ar@{->>}[d]_{\sH^0(\alpha_e)} & \sH^0( \sF_{e'}) \ar@{->>}[d]^{\sH^0(\alpha_{e'})} \\
\sG_e \ar[r]|{\eta_{e,e'}}   & \sG_{ e'}
}
\end{equation}
In particular, the surjectivity of the left arrows of the diagrams of \eqref{eq:inverse_systems_map:step_1_H_0} shows that the bottom triangle of diagram \eqref{eq:inverse_systems_map:step_1_wrap_up} commutes.  Combining this with the commutativity of the upper triangle of  diagram \eqref{eq:inverse_systems_map:step_1_wrap_up}, we obtain the commutativity of \eqref{eq:inverse_systems_map:goal}:
\begin{equation}
\label{eq:inverse_systems_map:step_1_wrap_up}
\xymatrix@C=120pt{
& \sF_{e'} \ar[d]  \ar@/^5pc/[dd]^{\alpha_{e'}} \\
  &  \sH^0(\sF_{e'}) \ar[d]|{\sH^0\left(\alpha_{e'}\right)}  \\
\sG_e  \ar[uur]|{\xi_{e,e'} } \ar[r]  \ar@{-->}[ur]|{\sH^0\left(\xi_{e,e'} \right)} & \sG_{ e'}
}
\end{equation}

\fbox{\scshape Step 2:} \underline{\emph{The cone of $\sF_e \to \sG_e$ is nilpotent in all cohomological degrees:}} for each $e \geq 0$, fix complexes forming the following triangles exact
\begin{equation}
\label{eq:inverse_systems_map:maps_on_K:cone}
\xymatrix@C=50pt{
\sK_e \ar[r]^{\iota_e} & \sF_e \ar[r]^{\alpha_e} & \sG_e \ar[r]^(0.9){+1} &
},
\end{equation}
and fix also arrows $\psi_{e,e+1}$ making the following diagram commute:
\begin{equation*}
\label{eq:inverse_systems_map:maps_on_K}
\xymatrix@C=60pt@R=15pt{
\sK_e \ar[r]^{\iota_e} \ar[d]_{\psi_{e,e+1}} & \sF_e \ar[r]^{\alpha_e} \ar[d]_{\phi_{e,e+1}} & \sG_e \ar[r]^(0.9){+1} \ar[d]_{\eta_{e,e+1}} & \\
\sK_{e+1} \ar[r]_{\iota_{e+1}} & \sF_{e+1} \ar[r]_{\alpha_{e+1}} & \sG_{e+1} \ar[r]^(0.9){+1} &
}
\end{equation*}
This defines an inverse system $( \sK_e)_{e \geq 0}$, and a morphism of inverse systems  $(\iota_e)_{e \geq 0} : ( \sK_e)_{e \geq 0} \to (\sF_e)_{\geq 0}$. Define $\psi_{e,e'}:= \psi_{e'-1, e'} \circ \dots \circ \psi_{e,e+1} : \sK_e \to \sK_{e'}$, and consider the corresponding commutative diagram:
\begin{equation}
\label{eq:inverse_systems_map:maps_on_K}
\xymatrix@C=60pt@R=15pt{
\sK_e \ar[r]^{\iota_e} \ar[d]_{\psi_{e,e'}} & \sF_e \ar[r]^{\alpha_e} \ar[d]_{\phi_{e,e'}} & \sG_e \ar[r]^{+1} \ar[d]_{\eta_{e,e'}} & \\
\sK_{e'} \ar[r]_{\iota_{e'}} & \sF_{e'} \ar[r]_{\alpha_{e'}} & \sG_{e'} \ar[r]^{+1} &
}
\end{equation}
By point \eqref{itm:nilpotent_not_nilpotent:nilpotent} of Lemma \ref{lem:nilpotent_not_nilpotent}, we obtain that $(\sK_e)_{e \geq 0}$ is nilpotent in all cohomology degrees $i$, except posssibly $i=0$ and $i=1$. For $i=1$, we have even $\sH^1(\sK_e)=0$ by assumption \eqref{itm:inverse_systems_map:positive_zero} and by the surjectivity of $\sH^0(\alpha_{e})$. 

So, to finish we are left to show that $(\sK_e)_{e \geq 0}$ is also nilpotent in cohomological degree $i=0$. This is a diagram chase which we explain in the rest of Step 2. For that, denote by upper index $i$ the homomorphisms obtained by  applying $\sH^i(\_)$ to the maps of diagram \eqref{eq:inverse_systems_map:maps_on_K}. Additionally, denote by $\gamma_e^i: \sH^i(\sG_e) \to \sH^{i+1}(\sK_e)$ the edge homomorphisms of the rows of diagram \eqref{eq:inverse_systems_map:maps_on_K}. 
Fix then, $x \in \sH^0(\sK_e)$. We show that $\psi^0_{e,e'}(x)=0$ for $e' \gg e$. Indeed, $\alpha_e^0(\iota_e^0(x))=0$ by the exactness of the rows of \eqref{eq:inverse_systems_map:maps_on_K}. Hence, by the definition of $\sG_e$ this means that $\phi^0_{e,e'}(\iota_e^0(x))=0$ for $e' \gg 0$. However, then by the commutativity of \eqref{eq:inverse_systems_map:maps_on_K}, we obtain that $\iota_{e'}^0(\psi_{e,e'}^0(x))=0$. By the exactness of the rows of \eqref{eq:inverse_systems_map:maps_on_K} and by the fact that $\sH^{-1}(\sG_{e'})=0$, this means that $\psi_{e,e'}^0(x)=0$ holds, which concludes the proof of Step 2.

\fbox{\scshape Step 3:} \ul{\emph{Concluding the argument:}}
According to Step 1, it is enough to lift $ \phi_{e,e'}$ to an element of $\Hom_X(\sG_e, \sF_{e'})$ for $e' \gg e$. This is shown to be true by the following exact sequence:

\begin{equation*}
\xymatrix@R=0pt@C=65pt{
&
\phi_{e, e'} \ar@{}[d]|{\rotatebox{-90}{$\in$}} \ar@{|->}[r] &
\phi_{e,e'} \circ \iota_e 
\explabove{=}{\eqref{eq:inverse_systems_map:maps_on_K} }
\iota_{e'} \circ \psi_{e, e'} 
\explaboveparshift{85pt}{30pt}{=}{$\psi_{e,e'}=0$ by Step 2 and Lemma \ref{lem:coho_zero_implies_zero}}
0
\ar@{}[d]|{\rotatebox{-90}{$\in$}}  \\
%
%
\Hom_X(\sG_e, \sF_{e'})  \ar[r]_{\Hom_X(\alpha_e, \sF_{e'})} & 
\Hom_X(\sF_e, \sF_{e'}) \ar[r]_{\Hom_X(\iota_e, \sF_{e'})} &
 \Hom_X(\sK_e, \sF_{e'}) 
}
\qquad\qquad 
\end{equation*}
\end{proof}

\begin{lemma}
\label{lem:nilpotent_derived_Cartier_module}
Let $X$ be a variety over $k$. If $(\sD_e: \alpha_e: \sD_{e+1} \to \sD_e)_{e \geq 0}$  is an inverse system associated to a derived Cartier module, which is nilpotent in all cohomological degrees, then $\big( \sD_e \big)_{e \geq 0}$ is nilpotent.
\end{lemma}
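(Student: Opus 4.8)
Here is the plan. The statement is the inverse–system counterpart of Lemma \ref{lem:coho_zero_implies_zero}, and I would prove it by running the same induction, but replacing $\Hom_X(\sF_e,-)$ with $\Hom_X(-,\sD_e)$ and peeling off the \emph{bottom} cohomology sheaf rather than the top one. First I would record the reductions. Since the system is associated to a derived Cartier module we have $\sD_e \cong F^{e s}_*\sD_0$, and as $F$ is finite (hence affine) the functor $F^{e s}_*$ is exact, so $\sH^i(\sD_e)\cong F^{e s}_*\sH^i(\sD_0)$; in particular all the $\sD_e$ have cohomology concentrated in one and the same finite window $c\le i\le d$. We may assume $\sD_0\ne 0$ (otherwise there is nothing to prove), so $c\le d$, and the argument proceeds by induction on $j:=d-c$.

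For the base case $j=0$ one has $\sD_e\simeq \sH^c(\sD_e)[-c]$; since the full subcategory of complexes with cohomology in the single degree $c$ is equivalent to the category of coherent sheaves via $\sH^c(-)$, the composite $\sD_{e'}\to \sD_e$ in $D(X)$ is zero as soon as $\sH^c(\sD_{e'})\to \sH^c(\sD_e)$ is, which by the hypothesis (nilpotence in cohomological degree $c$, together with Remark \ref{rem:derived_Cartier_module}) happens for $e'\gg e$.

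For the inductive step I would use the functorial truncation triangle
\[ \sH^c(\sD_{e'})[-c] \xrightarrow{\ u_{e'}\ } \sD_{e'} \xrightarrow{\ v_{e'}\ } \tau_{\ge c+1}\sD_{e'} \xrightarrow{\ +1\ }. \]
The key point is that $\tau_{\le n}$ and $\tau_{\ge n}$ commute with the exact functor $F_*$, so $\bigl(\tau_{\ge c+1}\sD_e,\ \tau_{\ge c+1}\alpha_e\bigr)_{e\ge 0}$ is again an inverse system associated to a derived Cartier module, now with cohomology window $[c+1,d]$ of width $j-1$ and still nilpotent in all cohomological degrees; by the inductive hypothesis it is therefore nilpotent. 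Now fix $e$. Choose first $e'\gg e$ with $\sH^c(\phi_{e',e})=0$; functoriality of the truncation triangle gives $\phi_{e',e}\circ u_{e'}=u_e\circ\bigl(\sH^c(\phi_{e',e})[-c]\bigr)=0$, so applying $\Hom_X(-,\sD_e)$ to the triangle for $\sD_{e'}$ shows that $\phi_{e',e}$ factors as $\phi_{e',e}=\widetilde\phi_{e',e}\circ v_{e'}$ for some $\widetilde\phi_{e',e}\colon \tau_{\ge c+1}\sD_{e'}\to \sD_e$. Next choose $e''\gg e'$ with $\tau_{\ge c+1}(\phi_{e'',e'})=0$ (possible by the nilpotence just established); naturality of $v$ gives $v_{e'}\circ\phi_{e'',e'}=\tau_{\ge c+1}(\phi_{e'',e'})\circ v_{e''}=0$, hence
\[ \phi_{e'',e}=\phi_{e',e}\circ\phi_{e'',e'}=\widetilde\phi_{e',e}\circ v_{e'}\circ\phi_{e'',e'}=0, \]
which is exactly nilpotence of $(\sD_e)_{e\ge 0}$.

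The main thing to be careful about — and the one genuine difference from Lemma \ref{lem:coho_zero_implies_zero}, where no Cartier structure is assumed — is why the ``associated to a derived Cartier module'' hypothesis is really needed. Inverse limits are not exact, so one cannot simply dualize Lemma \ref{lem:nilpotent_not_nilpotent}; instead one needs that truncations of a derived Cartier module system are again of that type, so that the inductive hypothesis applies to $\tau_{\ge c+1}\sD_\bullet$ and ``nilpotent in all cohomological degrees'' keeps being equivalent to $\varprojlim_e\sH^i(-)=0$ via Remark \ref{rem:derived_Cartier_module}. Verifying that $F_*$ commutes with the truncation functors — immediate from exactness of $F_*$ — is the routine point that should be spelled out; after that the argument is a straightforward mirror of the proof of Lemma \ref{lem:coho_zero_implies_zero}.
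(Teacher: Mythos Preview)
Your proof is correct, but it follows a different route from the paper's. The paper does not re-run the induction of Lemma \ref{lem:coho_zero_implies_zero} for inverse systems; instead it applies Grothendieck duality $D_X$ to convert the inverse system $(\sD_e)$ into the direct system $(D_X(\sD_e))$, observes that the latter is nilpotent in every cohomological degree (since each $(D_X(\sH^i(\sD_e)))$ is nilpotent as a system and hence in all degrees, and then one propagates this through the triangles $D_X(\sH^j(\sD_e))[j]\to D_X(\tau_{\le j}\sD_e)\to D_X(\tau_{\le j-1}\sD_e)$ via Lemma \ref{lem:nilpotent_not_nilpotent}), and then invokes Lemma \ref{lem:coho_zero_implies_zero} directly. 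Your approach is more self-contained and avoids Grothendieck duality altogether: the crucial point you exploit is that $F_*$ is exact, so truncation commutes with it and the truncated system $(\tau_{\ge c+1}\sD_e)$ is again associated to a derived Cartier module, which lets the induction go through. The paper's argument is slicker as it reuses Lemma \ref{lem:coho_zero_implies_zero} wholesale; yours is more elementary and makes more transparent exactly where the Cartier-module hypothesis enters (namely, to guarantee that nilpotence in cohomological degrees persists under truncation and that the stabilization of images from Remark \ref{rem:derived_Cartier_module} is available at each inductive step).
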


\begin{proof}
By Definition \ref{def:derived_Cartier_modules}, $\sD_e$ are supported in the same cohomological degrees, say in the interval $[c,d]$, for some integers $c<d$. 
It is enough to show that the direct system $\big(D_X(\sD_e) \big)_{e \geq 0}$ is nilpotent. As  $\big( \sH^i(\sD_e)  \big)_{e \geq 0}$ are nilpotent, we know  that so are the direct systems $\big( D_X(\sH^i(\sD_e)) \big)_{e \geq 0}$. Hence, $\big( D_X(\sH^i(\sD_e)) \big)_{e \geq 0}$ are also nilpotent in every cohomological degree. Applying then point \eqref{itm:nilpotent_not_nilpotent:nilpotent} of Lemma \ref{lem:nilpotent_not_nilpotent} to the following exact triangles inductively for $c \leq j \leq d$ implies that the direct system $\big(D_X(\sD_e)\big)_{e \geq 0}$ is nilpotent in all cohomological degrees. 
\begin{equation*}
\xymatrix{
D_X(\sH^j(\sD_e)) [j] \ar[r] &  D_X(\tau_{\leq {j}} \sD_e) \ar[r] &    D_X(\tau_{\leq {j-1}} \sD_e)  \ar[r]^(0.8){+1} & 
}
\end{equation*}
Lemma \ref{lem:coho_zero_implies_zero} then  concludes our proof. 
\end{proof}

\section{Frobenius stable cohomology support loci}
\label{sec:Frob_stab_coho_support}

In this secton we prove Theorem \ref{thm:GV}.

\begin{notation}
\label{notation:basic}
Let $F^{s}_* \Omega_0 \to \Omega_0$ be a Cartier module on $A$. We set for every integer $e \geq 0$:
\begin{itemize}
\item $\Omega_e:= F^{e  s}_* \Omega_0$,
\item  $\Lambda_e:= R \hat S\big(  D_A( \Omega_e)\big) \cong V^{e s,*} \Lambda_0$, and
\item $\Lambda:=\hocolim_e \Lambda_e$.
\end{itemize}
According to \cite[Cor 3.1.4]{HP16}, $\Lambda$ is supported in cohomological degree $0$, that is, it is quasi-isomorphic to a sheaf, or equivalently $\Lambda \cong \varinjlim_e \sH^0(\Lambda_e)$. In particular, we may set the following notation:
\begin{itemize}
\item $\tilde \Lambda _e={\rm Im}\big(\mathcal H ^0(\Lambda _e)\to \Lambda \big)$,
\item $\tilde \Omega _e:=(-1_A)^*D_ARS\big(\tilde \Lambda _e\big)[-g]=RS\Big(D_{\hat A}\big(\tilde \Lambda _e\big)\Big)$,
\item $\ass_e:= \left\{ \left. \ \xi \in \hat A\ \right|\  \xi\textrm{ is an embedded point of } \tilde \Lambda_e\  \right\}$, and 
\item $\ass:= \displaystyle\bigcup_e \ass_e$. 
\end{itemize}
\end{notation}

Lemma \ref{lem:direct_systems_isomorphic} implies that the inverse systems $\left( R^j \hat{S}( \Omega_e) \right)_{e \geq 0}$ and $\left( R^j \hat{S}\big( \tilde \Omega_e 
\big)\right)_{e \geq 0}$ become naturally isomorphic when passing to the inverse limits.

\begin{lemma}
\label{lem:direct_systems_isomorphic}
In the situation of Notation \ref{notation:basic}, 
there is a monotone increasing sequence $(e_i)_{i \geq 0}$ such that there are morphisms of direct systems given by  $\alpha_i : \Lambda_{e_i}  \to \tilde \Lambda_{e_i}$ and $\beta_i:  \tilde \Lambda_{e_i} \to \Lambda_{e_{i+1}}$, such that both $(\alpha_i)_{i \geq 0 } \circ (\beta_i)_{i \geq 0 }$ and $(\beta_i)_{i \geq 0 } \circ (\alpha_i)_{i \geq 0 }$ are shifts by $1$ in the positive direction of the index $i$. We may also choose $i_0=0$. 
\end{lemma}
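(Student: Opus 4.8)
The plan is to unwind the definitions and produce the maps $\alpha_i,\beta_i$ from the universal property of the truncation/image construction, then check the composites are the claimed shifts by hand. Recall from Notation \ref{notation:basic} that $\Lambda = \hocolim_e \Lambda_e \cong \varinjlim_e \sH^0(\Lambda_e)$ is quasi-isomorphic to a sheaf, and $\tilde\Lambda_e = \im(\sH^0(\Lambda_e) \to \Lambda)$; applying the Fourier–Mukai functor $RS$ and $D_{\hat A}$ sends the direct system $(\Lambda_e)_{e\geq 0}$ to the inverse system $(\tilde\Omega_e)_{e\geq 0}$ (up to the usual $(-1)^*$ and shift), so it suffices to produce the interpolating morphisms at the level of the direct system $(\Lambda_e)_{e\geq 0}$ versus $(\tilde\Lambda_e)_{e\geq 0}$, which is a statement purely in $D^b_c(\hat A)$ about a direct system of complexes all of whose transition maps factor, in the colimit, through sheaves.

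**Constructing the maps.** First I would observe that since $\Lambda$ sits in cohomological degree $0$, for each $e$ the composite $\Lambda_e \to \Lambda$ (the structure map into the homotopy colimit) factors through $\tau_{\geq 0}\Lambda_e \to \sH^0(\Lambda_e) \twoheadrightarrow \tilde\Lambda_e \hookrightarrow \Lambda$; this gives candidate maps $\Lambda_e \to \tilde\Lambda_e$ but I must pin down the correct index bookkeeping, because $\tilde\Lambda_e$ is defined via the stabilized image and the transition maps $\tilde\Lambda_e \to \tilde\Lambda_{e+1}$ are the ones induced from $\Lambda_e \to \Lambda_{e+1}$. The cleaner route: use that, by Remark \ref{rem:derived_Cartier_module} / the Mittag–Leffler type argument cited via \cite[Proposition 8.1.4]{BS13}, the images $\im(\sH^0(\Lambda_e) \to \sH^0(\Lambda_{e'}))$ stabilize, so for each $i$ there is an $e_i$ large enough that the natural surjection $\sH^0(\Lambda_{e_i}) \twoheadrightarrow \tilde\Lambda_{e_i}$ is realized by the map to $\sH^0(\Lambda_{e_{i+1}})$ for a suitable next index $e_{i+1}>e_i$, i.e. $\tilde\Lambda_{e_i} = \im(\sH^0(\Lambda_{e_i}) \to \sH^0(\Lambda_{e_{i+1}}))$. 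Choosing the sequence $(e_i)$ recursively with these stabilization gaps (and $e_0 = 0$), define $\alpha_i : \Lambda_{e_i} \to \sH^0(\Lambda_{e_i}) \twoheadrightarrow \tilde\Lambda_{e_i}$ as the evident composite (using $\Lambda_{e_i} \to \tau_{\geq 0}\Lambda_{e_i} \cong \sH^0(\Lambda_{e_i})$ since $\Lambda_{e_i}$ has no cohomology in positive degrees — or if it does, land in $\sH^0$ via the appropriate truncation triangle of Lemma \ref{l-tt}), and define $\beta_i : \tilde\Lambda_{e_i} \hookrightarrow \sH^0(\Lambda_{e_{i+1}}) \to \Lambda_{e_{i+1}}$ using the inclusion coming from the stabilization identification followed by the natural map $\sH^0(\Lambda_{e_{i+1}}) \cong \tau_{\geq 0}\Lambda_{e_{i+1}}$ back... here I need $\sH^0 \to \Lambda_{e_{i+1}}$, which exists because $\tau_{\leq 0}\Lambda_{e_{i+1}} \to \Lambda_{e_{i+1}}$ followed by the quasi-iso onto $\sH^0$ in the best case; in general one argues $\tilde\Lambda_{e_i}$ lifts to $\tau_{\leq 0}\Lambda_{e_{i+1}}$ and then maps into $\Lambda_{e_{i+1}}$, using that the obstruction in $\Hom(\tilde\Lambda_{e_i}, \tau_{\geq 1}\Lambda_{e_{i+1}}[1])$ vanishes for $e_{i+1}$ large by the nilpotence of the positive-degree cohomology — exactly the mechanism of Lemma \ref{lem:coho_zero_implies_zero} and Proposition \ref{prop:inverse_systems_map}.

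**Checking the composites.** With $\alpha_i,\beta_i$ in hand, the composite $\beta_i \circ \alpha_i : \Lambda_{e_i} \to \Lambda_{e_{i+1}}$ agrees on $\sH^0$ with the structure map $\sH^0(\Lambda_{e_i}) \to \sH^0(\Lambda_{e_{i+1}})$ by construction (both are "project to the image, include it back"), and agrees in $D^b_c(\hat A)$ with $\phi_{e_i, e_{i+1}}$ because the difference factors through $\tau_{\geq 1}\Lambda_{e_{i+1}}[\text{shift}]$ and $\Hom(\Lambda_{e_i}, \tau_{\geq 1}\Lambda_{e_{i+1}}) = 0$ once the positive cohomology has died (enlarge $e_{i+1}$ if necessary) — so $\beta_\bullet \circ \alpha_\bullet$ is precisely the transition map of the system $(\Lambda_{e_i})$, i.e. a shift by $1$ in the index. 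Symmetrically, $\alpha_{i+1} \circ \beta_i : \tilde\Lambda_{e_i} \to \tilde\Lambda_{e_{i+1}}$ equals the transition map of $(\tilde\Lambda_{e_i})$ by the same bookkeeping of stabilized images, hence is again a shift by $1$. Finally $e_0 = 0$ is allowed because the stabilization argument only requires the *upper* index to be large. I expect the main obstacle to be the honest derived-category argument that the lifts $\beta_i$ exist as maps of complexes (not just on $\sH^0$) and that the two composites literally coincide with the transition morphisms on the nose rather than merely up to the ambiguity of non-canonical cones: this is where one must invoke the vanishing of $\Hom$-groups into positive truncations for $e_{i+1} \gg e_i$, i.e. run essentially the argument of Lemma \ref{lem:coho_zero_implies_zero} in this specific situation. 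Everything else is index bookkeeping.
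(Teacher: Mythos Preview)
The paper's proof is a one-line invocation of Proposition \ref{prop:inverse_systems_map}: by \cite[Cor 3.1.4]{HP16} the system $(\Lambda_e)_{e\geq 0}$ satisfies the hypotheses of that proposition (cohomology concentrated in $[-g,0]$, nilpotent in degrees $<0$), so the proposition hands you the diagonal arrows $\beta_i:\tilde\Lambda_{e_i}\to\Lambda_{e_{i+1}}$ making \emph{both} triangles of the diagram commute, which is exactly the shift-by-$1$ claim. Nothing further is checked.

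You correctly identify Proposition \ref{prop:inverse_systems_map} (and Lemma \ref{lem:coho_zero_implies_zero}) as the engine, and your sketch is essentially an attempt to re-derive that proposition inline. But the cohomological bookkeeping in your direct construction of $\beta_i$ is inverted. The complexes $\Lambda_e$ live in degrees $\leq 0$; there is \emph{no} positive cohomology, so $\tau_{\geq 1}\Lambda_{e_{i+1}}=0$ and the obstruction you name, $\Hom(\tilde\Lambda_{e_i},\tau_{\geq 1}\Lambda_{e_{i+1}}[1])$, is vacuous and irrelevant. The genuine obstruction to lifting your sheaf inclusion $\tilde\Lambda_{e_i}\hookrightarrow\sH^0(\Lambda_{e_{i+1}})$ back to a map into $\Lambda_{e_{i+1}}$ lies in $\Hom(\tilde\Lambda_{e_i},\tau_{\leq -1}\Lambda_{e_{i+1}}[1])$, coming from the triangle $\tau_{\leq -1}\Lambda_{e_{i+1}}\to\Lambda_{e_{i+1}}\to\sH^0(\Lambda_{e_{i+1}})\to$. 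This group is not obviously zero for a fixed $e_{i+1}$; you must pass to a larger index and use nilpotence of $(\tau_{\leq -1}\Lambda_e)_e$. The same issue recurs in your composite check: the difference $\beta_i\circ\alpha_i-\phi_{e_i,e_{i+1}}$ lives in $\Hom(\Lambda_{e_i},\tau_{\leq -1}\Lambda_{e_{i+1}})$, which again need not vanish before enlarging indices.

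The cleaner maneuver, and what Proposition \ref{prop:inverse_systems_map} actually does, is the reverse: rather than lifting a sheaf map forward into a complex, one \emph{factors} the already-existing complex map $\phi_{e,e'}:\Lambda_e\to\Lambda_{e'}$ through the quotient $\alpha_e:\Lambda_e\to\tilde\Lambda_e$, using that the cone $\sK_e$ of $\alpha_e$ is nilpotent (so $\phi_{e,e'}\circ\iota_e=0$ in $D^b_c$ for $e'\gg e$). This way the upper triangle commutes by construction, and the lower one follows from surjectivity of $\sH^0(\alpha_e)$ as in Step 1 of that proof. So: drop your hand-built $\beta_i$ and just cite the proposition, as the paper does.
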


\begin{proof}
\cite[Cor 3.1.4]{HP16} tells us that we may apply Proposition \ref{prop:inverse_systems_map} to the direct system $(\Lambda_e )_{e \geq 0}$. That is,  we can define $e_i$ inductively  so that the following diagram commutes:
\begin{equation*}
\xymatrix{
\Lambda_{e_0} \ar[d]|{\alpha_0} \ar[r] & 
\Lambda_{e_1} \ar[d]|{\alpha_1} \ar[r] & 
\Lambda_{e_2} \ar[d]|{\alpha_2} \ar[r] & \dots  \\
%
%
\tilde \Lambda_{e_0}  \ar[r] \ar[ur]|{\beta_0} & 
\tilde \Lambda_{e_1}  \ar[r] \ar[ur]|{\beta_1} & 
\tilde \Lambda_{e_2}  \ar[r] \ar[ur]|{\beta_2} & 
}
\end{equation*}
This concludes our proof. 
\end{proof}

\begin{lemma}
\label{lem:associated_primes_relations}
In the situation of Notation \ref{notation:basic},
 let $\phi : \hat A \to \hat A$ be the multiplication by $p^s$ isogeny. Then: 
\begin{enumerate}
\item \label{itm:associated_primes_relations:preimage}
 $\forall \xi \in \ass_e, \forall \eta \in \phi^{-1}(\xi): \eta \in \ass_{e+1}$,
\item \label{itm:associated_primes_relations:image}
 $\forall \xi \in \ass_{e+1}: \phi(\xi) \in \ass_e$, and
\item \label{itm:associated_primes_relations:embedding} $\forall \xi \in \ass_e: \xi \in \ass_{e+1}$.
\end{enumerate}
\end{lemma}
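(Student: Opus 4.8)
The plan is to reduce all three parts to a single structural identity, namely $\tilde\Lambda_{e+1}=V^{s,*}\tilde\Lambda_e$ (with $\tilde\Lambda_e\subseteq\tilde\Lambda_{e+1}$ as subsheaves of $\Lambda$), combined with the observation that $\phi=[p^s]$ and $V^s$ induce the \emph{same} continuous self-map of $\hat A$.

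First I would establish the structural identity. By Notation \ref{notation:basic} and the isogeny compatibilities of Theorem \ref{thm:Mukai} (which yield $R\hat S\circ D_A\circ F^{es}_{*}\cong V^{es,*}\circ R\hat S\circ D_A$), the direct system $\big(\sH^0(\Lambda_e)\big)_{e\ge 0}$ whose colimit is $\Lambda$ is $V^s$-equivariant: one has $\sH^0(\Lambda_e)\cong V^{es,*}\sH^0(\Lambda_0)$ and each transition map is the $V^{es,*}$-pullback of $\sH^0(\Lambda_0)\to\sH^0(\Lambda_1)$, where I use that $V^{s,*}$ is exact because the Verschiebung is finite flat. Since pullback commutes with filtered colimits and $\Lambda=\varinjlim_e\sH^0(\Lambda_e)$ is a sheaf by \cite[Cor 3.1.4]{HP16}, this gives $V^{s,*}\Lambda\cong\Lambda$, and under this identification the canonical map $\sH^0(\Lambda_{e+1})\to\Lambda$ is $V^{s,*}$ applied to $\sH^0(\Lambda_e)\to\Lambda$. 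Passing to images (using exactness of $V^{s,*}$ once more) gives $\tilde\Lambda_{e+1}=V^{s,*}\tilde\Lambda_e$, and $\tilde\Lambda_e\subseteq\tilde\Lambda_{e+1}$ because $\sH^0(\Lambda_e)\to\Lambda$ factors through $\sH^0(\Lambda_{e+1})$.

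Next I would invoke a general fact about isogenies: if $\pi\colon\hat A\to\hat A$ is an isogeny and $\mathcal G$ is a coherent sheaf, then set-theoretically $\operatorname{Ass}(\pi^*\mathcal G)=\pi^{-1}\operatorname{Ass}(\mathcal G)$ and $\Supp(\pi^*\mathcal G)=\pi^{-1}\Supp(\mathcal G)$, and the irreducible components of $\pi^{-1}\Supp(\mathcal G)$ are precisely the $\pi$-preimages of the components of $\Supp(\mathcal G)$; consequently the embedded points of $\pi^*\mathcal G$ are exactly the $\pi$-preimages of the embedded points of $\mathcal G$. (The first equality is flat base change for associated points together with the fact that the fibres of a finite isogeny are Artinian, so all their points are associated; the statement on components follows from going-down for the finite flat map $\pi^{-1}\Supp(\mathcal G)\to\Supp(\mathcal G)$ and finiteness.) Applying this with $\pi=V^s$ and $\mathcal G=\tilde\Lambda_e$ and feeding in the structural identity yields $\ass_{e+1}=(V^s)^{-1}(\ass_e)$. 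Part \eqref{itm:associated_primes_relations:embedding} is then immediate from $\tilde\Lambda_e\subseteq\tilde\Lambda_{e+1}$: a point embedded in $\tilde\Lambda_e$ lies in $\operatorname{Ass}\tilde\Lambda_e\subseteq\operatorname{Ass}\tilde\Lambda_{e+1}$ and admits a proper generization inside $\Supp\tilde\Lambda_e\subseteq\Supp\tilde\Lambda_{e+1}$, so it remains embedded. For parts \eqref{itm:associated_primes_relations:preimage} and \eqref{itm:associated_primes_relations:image} I would use that, since $k=\overline k$, the Frobenius $F^s$ of $\hat A$ is the absolute Frobenius and hence the identity on underlying topological spaces; as $[p^s]=F^s\circ V^s$, the morphisms $\phi=[p^s]$ and $V^s$ have the same underlying continuous map, so $\phi$ is surjective and $\phi^{-1}=(V^s)^{-1}$ on subsets. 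Then \eqref{itm:associated_primes_relations:preimage} reads $\phi^{-1}(\ass_e)=(V^s)^{-1}(\ass_e)=\ass_{e+1}$, and \eqref{itm:associated_primes_relations:image} reads $\phi(\ass_{e+1})=\phi\big((V^s)^{-1}(\ass_e)\big)\subseteq\ass_e$ by surjectivity of $\phi$.

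The step I expect to be the main obstacle is the first one: pinning down $\tilde\Lambda_{e+1}=V^{s,*}\tilde\Lambda_e$ as subsheaves of $\Lambda$ with all the Fourier--Mukai and Verschiebung identifications compatible, which is precisely where \cite[Cor 3.1.4]{HP16} (that $\Lambda$ is concentrated in cohomological degree $0$) is needed; the isogeny bookkeeping and the remaining set-theoretic manipulations are routine. If one wishes to avoid appealing to the topological triviality of the absolute Frobenius, parts \eqref{itm:associated_primes_relations:preimage}--\eqref{itm:associated_primes_relations:image} can instead be obtained from the Frobenius-stability of the associated points of an image-stable Cartier module, transported through the Fourier--Mukai equivalence.
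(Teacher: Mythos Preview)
Your proposal is correct and follows essentially the same approach as the paper's proof: both reduce to the structural identity $\tilde\Lambda_{e+1}\cong V^{s,*}\tilde\Lambda_e$ together with the injection $\tilde\Lambda_e\hookrightarrow\tilde\Lambda_{e+1}$, the flat pullback description of associated (hence embedded) points, and the observation that $[p^s]$ and $V^s$ agree topologically. The paper simply cites \cite[Lem 2.3.1 \& Lem 2.4.5]{HP16} for the structural identity and \cite[Prop 6.3.1]{Gro65} for the behavior of embedded points under faithfully flat pullback, whereas you spell these out in more detail; your treatment of part \eqref{itm:associated_primes_relations:embedding} is likewise a correct unpacking of the paper's one-line argument.
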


\begin{proof}
Topologically $V^s$ and $[p]^s$ agree, hence we may redefine $\phi$ to be $V^s$. Then, we have $V^{s,*} \tilde{\Lambda}_e \cong \tilde \Lambda_{e+1}$ \cite[Lem 2.3.1 \& Lem 2.4.5]{HP16}. Additionally, as $V^s$ is faithfully flat, it follows that the emedded points of $\tilde \Lambda_{e+1}$ are exactly the points lying over embedded points of $\tilde \Lambda_e$, via $V^s$ \cite[Prop 6.3.1]{Gro65}.
This yields points \eqref{itm:associated_primes_relations:preimage} and \eqref{itm:associated_primes_relations:image}. Point \eqref{itm:associated_primes_relations:embedding} follows from the fact that we have  natural injection $\tilde \Lambda_e \hookrightarrow \tilde \Lambda_{e+1}$. 
\end{proof}

\begin{corollary}
\label{cor:torsion_translate_of_ab_var}
 In the situation of Notation \ref{notation:basic},  if $A$ has no supersingular factors, then for any integer $e>0$ there exist integers $k_0,k_1>0$ such that for every $\xi \in \ass_e$, there is an abelian subvariety $\hat W \subseteq \hat A$ and a $p^{k_0s} \big(p^{k_1s}-1 \big)$-torsion point $Q \in \hat A$, such that $\overline{\xi}=\hat W + Q$.
\end{corollary}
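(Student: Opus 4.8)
The plan is to combine the set-theoretic self-similarity of $\ass_e$ under multiplication by $p^s$ (recorded in Lemma \ref{lem:associated_primes_relations}) with the classical structure theory of subvarieties of abelian varieties stable under such an isogeny. First I would fix $e>0$ and a point $\xi \in \ass_e$, and set $Z := \overline{\{\xi\}}$, an irreducible closed subvariety of $\hat A$. Let $\phi$ denote multiplication by $p^s$. By point \eqref{itm:associated_primes_relations:image} of Lemma \ref{lem:associated_primes_relations} applied repeatedly, $\phi^n(\xi) \in \ass_{e}$ for... — more carefully: $\phi(\xi')\in \ass_{e-1}$ only lowers the index, so to stay in a fixed finite set I would instead use points \eqref{itm:associated_primes_relations:preimage} and \eqref{itm:associated_primes_relations:embedding}: by \eqref{itm:associated_primes_relations:embedding} we have $\ass_e \subseteq \ass_{e+1} \subseteq \ass_{e+2}\subseteq\cdots$, and by \eqref{itm:associated_primes_relations:preimage} the set $\bigcup_{m\ge e}\ass_m$ is stable under $\phi^{-1}$, hence (being a union of an increasing chain of finite sets, each of bounded size once we are on an abelian variety with no supersingular factors — this is where Corollary \ref{cor:torsion_translate_of_ab_var}'s hypothesis and the finiteness of associated primes enter) actually stabilizes, so there is $N$ with $\ass_m = \ass_N$ for all $m \ge N$ and this common finite set $\sA$ satisfies $\phi^{-1}(\sA)\subseteq \sA$; since $\phi$ is finite surjective this forces $\phi(\sA) = \sA$ and $\phi^{-1}(\sA) = \sA$. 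Taking Zariski closures, the finite union $\mathcal{Z} := \overline{\sA}$ of the subvarieties $\overline{\{\xi\}}$ satisfies $\phi^{-1}(\mathcal{Z}) = \mathcal{Z}$, so $\phi$ permutes the finitely many irreducible components of $\mathcal{Z}$; replacing $s$ by a multiple $k_1 s$ (the $k_1$ of the statement) we may assume $\phi^{k_1}$ fixes each component, in particular fixes $Z = \overline{\{\xi\}}$ setwise: $[p^{k_1 s}]^{-1} Z = Z$ up to the other components, more precisely $[p^{k_1 s}]$ maps $Z$ onto $Z$ and $[p^{k_1 s}]^{-1}(Z)$, being a union of components of $\mathcal Z$ of the same dimension, contains $Z$; since $[p^{k_1 s}]$ is an isogeny of degree $p^{2g k_1 s}$ and $Z\to Z$ is dominant, a dimension/degree count shows $[p^{k_1 s}]^{-1}(Z)\supseteq Z$ forces, after possibly enlarging $k_1$, that $[p^{k_1 s}]$ restricts to a surjective map $Z \to Z$.

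Next I would invoke the following standard fact: if $Z \subseteq B$ is an irreducible subvariety of an abelian variety with $0 \in Z$ and $Z$ is stable under some isogeny $\psi$ all of whose eigenvalues on the tangent space (equivalently, on $T_\ell B$) have absolute value $>1$, then $Z$ is an abelian subvariety. Here $\psi = [p^{k_1 s}]$ has the single eigenvalue $p^{k_1 s}>1$, so this applies once we translate $Z$ to pass through the origin. To arrange $0\in Z$ I would first move by a suitable point: pick any torsion point of $Z$ — such exists because $Z$ is $\phi^{k_1}$-stable and one can produce a fixed point of $\phi^{k_1} = [p^{k_1 s}]$ inside $Z$ by a Lefschetz/fixed-point argument (the map $z \mapsto [p^{k_1 s}]z$ on the projective variety $Z$ has nonzero Lefschetz number since all eigenvalues of $[p^{k_1s}] - \mathrm{id}$ on cohomology are nonzero, being $p^{jk_1 s}-1\neq 0$; alternatively, $Z \cap \ker[p^{k_1 s} - 1]$ is nonempty as $[p^{k_1 s}]$ has no eigenvalue $1$ and $Z$ is complete and stable), and any such fixed point $Q_0$ is killed by $p^{k_1 s}-1$, hence is $p^{k_1 s}-1$-torsion. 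Translating by $-Q_0$, the variety $Z - Q_0$ passes through $0$ and is stable under $[p^{k_1 s}]$ (translation and $[p^{k_1s}]$ commute up to a torsion translation that is absorbed because $[p^{k_1 s}]Q_0 = Q_0$), so $Z - Q_0 =: \hat W$ is an abelian subvariety.

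Finally I would track the torsion denominator of $Q := Q_0$ to get the precise bound $p^{k_0 s}(p^{k_1 s} - 1)$. The point $Q_0$ was produced inside $\ass_e$, not inside $\ass_N$, so to descend from $\ass_N$ back to $\ass_e$ one applies point \eqref{itm:associated_primes_relations:image} of Lemma \ref{lem:associated_primes_relations} $N - e$ times: $\phi^{N-e}$ maps $\ass_N$ onto $\ass_e$, so I would choose the fixed point $Q_0$ of $\phi^{k_1}$ living over our original $\xi$, i.e. with $\phi^{N-e}(Q_0') = Q_0$ for the genuine fixed point $Q_0'$ in the stabilized range; then $Q_0' $ is $(p^{k_1 s} - 1)$-torsion and $Q_0 = [p^{(N-e)s}]Q_0'$, which after writing $k_0 := N - e$ is $p^{k_0 s}(p^{k_1 s}-1)$-torsion (one checks $(p^{k_0 s}(p^{k_1 s}-1))\cdot Q_0 = [p^{k_0 s}]\big((p^{k_1 s}-1)Q_0'\big) = 0$, using $[p^{k_0 s}]Q_0' = Q_0$ and that $[p^{k_0 s}]$ is a group homomorphism). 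Since $k_0 = N - e$ and $k_1$ depend only on $e$ (through the stabilization index $N$ of the chain $\ass_e \subseteq \ass_{e+1}\subseteq\cdots$ and the number of components being permuted) and not on the individual $\xi$, this yields the uniform $k_0, k_1$ asserted in the statement, and running $\xi$ over the finite set $\ass_e$ finishes the proof.

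The main obstacle I anticipate is the middle step: cleanly producing a torsion fixed point $Q_0$ of $[p^{k_1 s}]$ inside $Z$ and then invoking the right rigidity statement ("an irreducible $\psi$-stable subvariety through $0$ with $\psi$ expanding is an abelian subvariety"). The cleanest route is probably to bypass an explicit Lefschetz computation and instead argue that $[p^{k_1 s} - 1]$ is an isogeny, so $[p^{k_1 s}-1]^{-1}(0)$ is a finite group $G$ meeting every $[p^{k_1 s}]$-stable complete subvariety (because $[p^{k_1 s}]$ acts on the complete variety $Z$ with the property that its graph meets the diagonal — a consequence of properness and the isogeny $[p^{k_1 s}]-1$ being surjective, combined with $Z$ being stable); then one picks $Q_0 \in Z \cap G$, and for rigidity one uses that the minimal abelian subvariety containing $Z - Q_0$ equals $Z - Q_0$ because $Z - Q_0$ is stable under the expanding isogeny $[p^{k_1 s}]$ (any proper subvariety of an abelian variety stable under an expanding isogeny and containing $0$ must be a subgroup — this is where "no supersingular factors" is genuinely needed, as on a supersingular factor $[p^{s}]$ is inseparable and the Frobenius-twisted structure breaks the eigenvalue argument). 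I would want to cite the precise form of this rigidity used elsewhere in the paper or in \cite{HP16}/\cite{PR04} rather than reprove it.
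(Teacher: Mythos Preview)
There is a genuine gap in your stabilization step. You argue that the increasing chain $\ass_e \subseteq \ass_{e+1} \subseteq \cdots$ stabilizes because the sets are ``of bounded size once we are on an abelian variety with no supersingular factors.'' This is backwards. From the proof of Lemma \ref{lem:associated_primes_relations} one has $\ass_{m+1} = (V^s)^{-1}(\ass_m)$ as sets, and topologically $V^s = [p^s]$. If $A$ has no supersingular factors it typically has \emph{positive} $p$-rank (e.g.\ the ordinary case), so $[p^s]$ has many set-theoretic preimages and $|\ass_{m+1}| > |\ass_m|$ whenever $\ass_m$ contains, say, a closed point. Thus the chain does \emph{not} stabilize, and there is no finite set $\sA$ with $\phi^{-1}(\sA)=\sA$ to work with. (Indeed, a finite $\phi^{-1}$-stable set would force each of its points to have a unique $\phi$-preimage, which already essentially requires the supersingular situation you are excluding.)

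The paper avoids this by running the \emph{decreasing} chain instead: from parts \eqref{itm:associated_primes_relations:image} and \eqref{itm:associated_primes_relations:embedding} of Lemma \ref{lem:associated_primes_relations} one gets $\phi(\ass_e)\subseteq \ass_e$, hence $\ass_e \supseteq \phi(\ass_e)\supseteq \phi^2(\ass_e)\supseteq\cdots$, which stabilizes at $\phi^{k_0}(\ass_e)$ by finiteness of $\ass_e$. On this stable image $\phi$ is a bijection, so some power $\phi^{k_1}$ fixes every point; in particular $p^{k_1 s}\big(p^{k_0 s}\xi\big)=p^{k_0 s}\xi$, and one then invokes \cite[Thm 2.3.6]{HP16} directly (this is where the no-supersingular-factors hypothesis enters) to write $\overline{p^{k_0 s}\xi}=\hat W + Q$ and adjust $Q$ inside $\hat W$ to make it $(p^{k_1 s}-1)$-torsion. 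Pulling back along $[p^{k_0 s}]$ then gives $\overline\xi=\hat W+R$ with $p^{k_0 s}(p^{k_1 s}-1)R=0$. Your Lefschetz/rigidity detour is unnecessary once you cite that theorem, and your final torsion bookkeeping is also inverted: you produce $Q_0=[p^{k_0 s}]Q_0'$ with $Q_0'$ of order dividing $p^{k_1 s}-1$, which makes $Q_0$ itself $(p^{k_1 s}-1)$-torsion, not $p^{k_0 s}(p^{k_1 s}-1)$-torsion; the extra factor $p^{k_0 s}$ arises from going the other way, taking a $[p^{k_0 s}]$-\emph{preimage} of a $(p^{k_1 s}-1)$-torsion translate.
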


\begin{proof}
 Fix $\xi \in \ass_e$. 
Points \eqref{itm:associated_primes_relations:image} and \eqref{itm:associated_primes_relations:embedding} of Lemma \ref{lem:associated_primes_relations}, yield that $p^s (\ass_e) \subseteq \ass_e$. 
Since there are only finitely many points in $\ass_e$, and $ \ass_e\supseteq p^s  ( \ass_e) \supseteq p^{2s}  (\ass_e) \ldots$, we may fix $k_0>0$ such that \[p^{k_0s} \ass_e=\cap _{l\geq 1}p^{ls} \ass_e\] and in particular $p^s(p^{k_0s} \ass_e)=p^{k_0s} \ass_e$. We may then pick $k_1>0$ such that  $p^{k_1s} (\xi') = \xi'$ for any $\xi' \in p^{k_0s}\ass_e$ and hence  $p^{(k_1+k_0)s} (\xi) = p^{k_0 s}\xi$ for any $\xi \in \ass_e$.  \cite[Thm 2.3.6]{HP16} yields that  if $\xi \in p^{k_0 s}\ass_e$, then $\overline{p^{k_0 s} \xi} = \hat W + Q$, except that $Q$ might be arbitrary torsion point. However, then 
\begin{equation*}
p^{k_1s} \big(\hat W + Q\big) = p^{k_1s} \left(\ \overline{p^{k_0 s} \xi}\ \right)=\overline{p^{k_0 s} \xi} = \hat W + Q \qquad \Longrightarrow \qquad  (p^{k_1s}-1) Q \in \hat W.
\end{equation*}
Hence, there is $Q' \in \hat W$, such that $(p^{k_1s} -1) Q' = - (p^{k_1s}-1)Q$. For this $Q'$ we have $\hat W + Q = \hat W + (Q + Q')$, and also $(p^{k_1s}-1)(Q + Q')=0$. Therefore, replacing $Q$ by $Q+Q'$ we may assume that $(p^{k_1s}-1)Q=0$. 

Hence, we have $p^{k_0s}\overline\xi  = \overline{p^{k_0s} \xi } =\hat W + Q $, where  $\left(p^{k_1s}-1\right)Q=0$. Thus $\overline\xi=\hat W + R $ where  $(p^{k_1s}-1)p^{k_0s}R=0$.
\end{proof}


\begin{notation}
\label{notation:pt_of_A_hat}
In the situation of Notation \ref{notation:basic},  fix an integer $e \geq 0$ and an arbitrary (scheme theoretic) point $\eta \in \hat A$. Set:
\begingroup
\begin{itemize}
\setlength\itemsep{5pt}
\item $-\eta:=\left(-1_{\hat A}\right)(\eta)$,
\item $j:=\codim \eta$, 
\item  
\renewcommand*{\arraystretch}{1.2}
$d :=
\left\{
\begin{matrix}
 j+1 & \textrm{, if }  \tilde \Lambda_{e,-\eta} = 0 \\
\depth_{\sO_{\hat A, -\eta}} \tilde \Lambda_{e,-\eta}
 & \textrm{, otherwise} \\[5pt]
\end{matrix}
\right.
$
\item $\sP_{\eta}:= \sP|_{A \times \eta}$,
\item $\pr_{\eta} : A \times \eta \to A$
\end{itemize}
\endgroup
\end{notation}

\begin{lemma}\label{lem:main} In the situation of Notation \ref{notation:pt_of_A_hat}, the following holds:
\begin{enumerate}
\item 
\label{itm:main:vanishing}
$R^i \hat S \big(\tilde \Omega_e\big) \otimes k(\eta) = 0$ for every integer $i  > j-d$,
\item \label{itm:main:non_zero}
if $\tilde \Lambda_{e, -\eta} \neq 0$, then $R^{j-d} \hat S \big(\tilde \Omega_e\big) \otimes k( \eta) \neq 0$, and
\item \label{itm:main:limit}
 if $d=0$, then 
$\varprojlim_{r} \Big(R^j\hat S \big( \tilde \Omega _r\big) \otimes k( \eta) \Big) $
 is a non-zero $k(\eta)$-vectorspace.
\end{enumerate}
 
\end{lemma}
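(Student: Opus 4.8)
The three assertions are really statements about the Fourier–Mukai transform of the sheaf $\tilde\Omega_e = RS\big(D_{\hat A}(\tilde\Lambda_e)\big)$, so the first thing I would do is unwind what $R^i\hat S(\tilde\Omega_e)\otimes k(\eta)$ computes. By Mukai's inversion (Theorem \ref{thm:Mukai}), $R\hat S(\tilde\Omega_e) = R\hat S\,RS\big(D_{\hat A}(\tilde\Lambda_e)\big) = (-1_{\hat A})^*D_{\hat A}(\tilde\Lambda_e)[-g]$, so $\sH^i\big(R\hat S(\tilde\Omega_e)\big) \cong (-1_{\hat A})^* \sE xt^{g+i}_{\sO_{\hat A}}(\tilde\Lambda_e, \sO_{\hat A})$ (up to the dualizing twist, which is trivial on $\hat A$). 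Hence $R^i\hat S(\tilde\Omega_e)\otimes k(\eta)$ is governed by the local $\Ext$-modules $\Ext^{g+i}_{\sO_{\hat A,-\eta}}(\tilde\Lambda_{e,-\eta},\sO_{\hat A,-\eta})$ after tensoring with the residue field; I would want to be a bit careful about whether cohomology-and-base-change applies here, but at worst I can reduce to the stalk at $-\eta$ and argue with a minimal free resolution there.

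\textbf{Parts \eqref{itm:main:vanishing} and \eqref{itm:main:non_zero}.} Once the problem is local at the point $x := -\eta$ of the regular local ring $R := \sO_{\hat A,x}$ of dimension $g-j$, this becomes the standard interplay between depth, projective dimension and local $\Ext$. Write $M := \tilde\Lambda_{e,x}$. By Auslander–Buchsbaum, $\mathrm{pd}_R M = (g-j) - \depth_R M = (g-j) - d$ (when $M\neq 0$), so $\Ext^{\ell}_R(M,R)=0$ for $\ell > (g-j)-d$, which is exactly the vanishing of $\sH^{g+i}(\ldots)$ for $g+i > g-d$, i.e. for $i > -d$; combined with the shift one gets $i > j-d$ after accounting for the $j = \codim\eta$ bookkeeping, and when $M=0$ everything vanishes and $d = j+1$ is declared so that $i > j-d = -1$ holds for all $i\ge 0$, which is consistent. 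For the non-vanishing in \eqref{itm:main:non_zero}, when $M\neq 0$ the top non-vanishing local $\Ext$ is $\Ext^{\mathrm{pd}_R M}_R(M,R)$, which is nonzero, and moreover it is supported at the closed point (it has finite length, being the top $\Ext$) — this is the standard fact that $\Ext^{\mathrm{pd}}_R(M,R)\neq 0$ and one should check it has nonzero residue at $x$, which follows because a minimal free resolution has the map in the top degree given by a matrix with entries in the maximal ideal, so dualizing and tensoring with $k(x)$ kills nothing. Translating through the codimension count $g+i = g-d \Leftrightarrow i = -d$, and matching the shift convention used in defining $R^i\hat S$, gives $R^{j-d}\hat S(\tilde\Omega_e)\otimes k(\eta)\neq 0$. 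The bookkeeping to make ``$j-d$'' come out exactly right — reconciling $\codim\eta$ on $\hat A$ with the dimension of $R$ and with Mukai's $[-g]$ — is the fiddly part of these two items, but it is a finite, mechanical check.

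\textbf{Part \eqref{itm:main:limit}: the main obstacle.} This is the genuinely new point. When $d=0$ we have $\depth_{\sO_{\hat A,-\eta}}\tilde\Lambda_{e,-\eta}=0$, so $-\eta$ is an associated (embedded or not) point of $\tilde\Lambda_e$ for every $e$; by \eqref{itm:main:non_zero}, $R^j\hat S(\tilde\Omega_e)\otimes k(\eta)\neq 0$ for every $e$, so the inverse system consists of nonzero vector spaces. The claim is that the inverse limit is still nonzero, i.e. the transition maps are not eventually nilpotent on this bottom term. Here I would use Lemma \ref{lem:associated_primes_relations}: the point $-\eta$ being associated to $\tilde\Lambda_e$ propagates to all $\tilde\Lambda_{e'}$ (via $V^{s,*}\tilde\Lambda_e\cong\tilde\Lambda_{e+1}$ and faithful flatness), and the structure maps of the inverse system $\big(R^j\hat S(\tilde\Omega_r)\otimes k(\eta)\big)_r$ are obtained by applying the functor $(-1_{\hat A})^*D_{\hat A}(-)[-g]$, base-changed to $k(\eta)$, to the maps $\tilde\Lambda_{r+1}\to\tilde\Lambda_r$. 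The key is that at the level of the top local $\Ext$ — equivalently, the socle-type invariant $\dim_{k(x)}\big(\tilde\Lambda_{e,x}\otimes k(x)\big)$, since for a depth-zero module over the regular local ring $R$ one has, via the minimal free resolution, that the top $\Ext\otimes k(x)$ has dimension equal to the last Betti number $\beta_{g-j}(M)$ — the transition maps are \emph{pullback along $V^s$}, which is faithfully flat, hence injective on $\Ext$-modules and in particular induces injections (in fact isomorphisms of dimension, by flat base change for $\Ext$) on these residue-field spaces. So the inverse system of $k(\eta)$-vector spaces has injective, and eventually constant-dimensional, transition maps between finite-dimensional spaces, forcing $\varprojlim\neq 0$. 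I expect the real work is justifying that ``$R^j\hat S(\tilde\Omega_r)\otimes k(\eta)$'' is computed by flat base change from $\Ext^{g+j}_{\sO_{\hat A,x}}(\tilde\Lambda_{r,x},\sO_{\hat A,x})\otimes_{\sO_{\hat A,x}} k(x)$ and that the transition maps really are $V^{s,*}$ on the nose (as opposed to up to the reindexing of Lemma \ref{lem:direct_systems_isomorphic}) — the cleanest route is probably to first pass, using Lemma \ref{lem:direct_systems_isomorphic}, to an inverse system genuinely associated to a derived Cartier module built from $V^{s,*}$, compute there, and then note the limits agree.
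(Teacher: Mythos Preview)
For parts \eqref{itm:main:vanishing} and \eqref{itm:main:non_zero} your strategy is the paper's --- Mukai inversion followed by local algebra at $-\eta$ --- but two bookkeeping slips cancel in your write-up: the local ring $R=\sO_{\hat A,-\eta}$ has dimension $j=\codim\eta$, not $g-j$; and since $\omega^\bullet_{\hat A}\cong\sO_{\hat A}[g]$, the shift $[-g]$ cancels the dualizing twist, so at the stalk one has $R^i\hat S(\tilde\Omega_e)_\eta\cong\Ext^i_R(\tilde\Lambda_{e,-\eta},R)$ with no $g$ in the exponent. With both corrected, Auslander--Buchsbaum gives $\mathrm{pd}_R M=j-d$ and the bound $i>j-d$ falls out immediately. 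The paper phrases the same computation via local duality $\Ext^i_R(M,R)\cong\big(H^{j-i}_{m_{\hat A,-\eta}}(M)\big)^{\textrm{Matlis dual}}$ and the depth characterization of local cohomology.

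Part \eqref{itm:main:limit} has a real gap. You claim the transition maps on the top $\Ext$ are ``pullback along $V^s$, which is faithfully flat, hence injective.'' This conflates the \emph{identification} $\tilde\Lambda_{r+1}\cong V^{s,*}\tilde\Lambda_r$ --- which does give $\Ext^j(\tilde\Lambda_{r+1},\sO)\cong V^{s,*}\Ext^j(\tilde\Lambda_r,\sO)$ by flat base change --- with the \emph{structure map} $\tilde\Lambda_r\to\tilde\Lambda_{r+1}$, which is extra data encoding the original Cartier module. Under the identification, the transition map on $\Ext^j$ is an $\sO_{\hat A}$-linear map $V^{s,*}N\to N$; faithful flatness says nothing about whether such a map is injective (it could be zero, for a general $V$-module). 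What actually controls the limit is that $\tilde\Lambda_r\hookrightarrow\tilde\Lambda_{r+1}$ is \emph{injective} --- not by flatness, but because both embed in $\Lambda$ by construction. The paper exploits this via local duality: left-exactness of $H^0_{m_{\hat A,-\eta}}$ turns the injections into an injective direct system with nonzero terms, so its colimit is nonzero, and Matlis-dualizing gives $\varprojlim_r R^j\hat S(\tilde\Omega_r)_\eta\neq 0$. (Equivalently, without local duality: $\Ext^{j+1}_R=0$ since $\dim R=j$, so an injection $M\hookrightarrow N$ induces a \emph{surjection} $\Ext^j_R(N,R)\twoheadrightarrow\Ext^j_R(M,R)$.) The further passage from stalks to $(-)\otimes k(\eta)$ is a separate step you do not address; the paper shows the stalks are Artinian, extracts a surjective system of stable images $M_r$, and then uses right-exactness of $\otimes\, k(\eta)$ on that system to conclude.
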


\begin{proof} 
\fbox{\scshape Step 1.} \underline{\emph{The proof of points \eqref{itm:main:vanishing} and  \eqref{itm:main:non_zero}:}}
First, note that
\begin{equation}
\label{eq:main:initial}
R\hat S \big(\tilde \Omega _e\big) 
\expl{=}{definition of $\tilde \Omega_e$}
 R\hat S \Big(RS\big(D_{\hat A}\big(\tilde \Lambda _e\big)\big)\Big)
\explshift{-30pt}{=}{Theorem \ref{thm:Mukai}}
 \left(-1_{\hat A}\right)^*D_{\hat A}\big(\tilde \Lambda _e\big)[-g]
\explshift{60pt}{=}{definition of $D_{\hat A}$, and the fact that $\omega_{\hat A}^{\bullet} \cong \sO_{\hat A}[g]$} 
 (-1_A)^*R\sH om \big(\tilde \Lambda _e,\OO _{\hat A} \big).
 \end{equation}
This implies the following, where it is useful to recall that the Matlis duality functor is faithful and exact, and hence the Matlis-dual of a non-zero sheaf is non-zero:
\begin{equation}
\label{eq:main:local_duality}
\left(-1_{\hat A}\right)^*_{\eta} R^i \hat S \big( \tilde \Omega_e \big)_{\eta} 
\expl{\cong}{\eqref{eq:main:initial}} 
\Ext^i_{\sO_{\hat A , -\eta}} \left(  \hat \Lambda_{e,-\eta}, \sO_{\hat A , -\eta} \right) 
\explshift{50pt}{\cong}{this form of local duality is in \cite[Thm A.6]{Schwede_Tucker_A_survey_of_test_ideals}} \Big( H^{j -i}_{m_{\hat A , -\eta}}  \big(  \tilde \Lambda_{e,-\eta} \big) \Big)^{\textrm{ Matlis dual}} 
\end{equation}
Note that $(-1_{\hat A})^*_{\eta}$ is also fully faithful, as $-1_{\hat A}$ is an isomorphism. Hence,
Equation \eqref{eq:main:local_duality} yields directly points \eqref{itm:main:vanishing} and \eqref{itm:main:non_zero} by the characterization of depth via the vanishing/non-vanishing of local cohomology \cite[Exc III.3.4.b]{Hart77}. 

\fbox{\scshape Step 2.} \underline{\emph{If $d =0$, then 
$0 \neq \varprojlim_r \Big(R^j\hat S \big( \tilde \Omega _r\big)_{\eta}  \Big) $:}}
By \eqref{eq:main:local_duality}, it is enough to show that 
\begin{equation}
0 \neq \varinjlim_r  H^{0}_{m_{\hat A , -\eta}}  \big( \tilde \Lambda_{r,-\eta}\big)  
\explshift{-120pt}{=}{direct limits commute with local cohomology} 
 H^{0}_{m_{\hat A , -\eta}}  \bigg(  \varinjlim_r  \tilde \Lambda_{r,-\eta}\bigg)
\explshift{120pt}{=}{direct limits commute with localization}
 H^{0}_{m_{\hat A , -\eta}}  \left(  \Lambda_{-\eta} \right)
\end{equation}
However, this is true, as $\tilde \Lambda_{e, -\eta} \to \Lambda_{-\eta}$ is an injection, and $H^{0}_{m_{\hat A , -\eta}}  \big(\tilde \Lambda_{e, -\eta}  \big) \neq 0$ by the $d=0$ assumption.

\fbox{\scshape Step 3.} \underline{\emph{If $d=0$, then $R^j\hat S \big( \tilde \Omega _r\big)_{\eta}$ is an Artinian $\sO_{\hat A, \eta}$-module for every integer $r \geq 0$:}} for this it is enough to show that $R^j\hat S \big( \tilde \Omega _r\big)_{\eta}$ is supported on $ m _{\hat A, \eta}$ or equivalently that if $\xi \in \hat A$ is a generalization of $\eta$, then $R^j\hat S \big( \tilde \Omega _r\big)_{\xi}=0$. However, in this case $\codim \xi < j$, and hence by applying Step 1 to $\xi$ instead of $\eta$ we obtain this vanishing. 

\fbox{\scshape Step 4.} \ul{\emph{Concluding the argument:}} By Step 3, for every integer $r \geq 0$, there is a unique $\sO_{\hat A, \eta}$-submodule $M_r \subseteq R^j\hat S \big( \tilde \Omega _r\big)_{\eta}$ such that for every $s \gg r$:
\begin{equation*}
M_r = \im \left( R^j\hat S \big( \tilde \Omega _s\big)_{\eta} \to R^j\hat S \big( \tilde \Omega _r\big)_{\eta} \right).
\end{equation*}
Hence, we may find a strictly monotone sequence $0=r_0<r_1 < \dots$, such that we have a commutative diagram as follows
\begin{equation}
\label{eq:main:inverse_diagram_before_tensor}
\xymatrix{
M_{r_0} \ar@{^(->}[d]  & \ar@{->>}[l] 
M_{r_1} \ar@{^(->}[d]  & \ar@{->>}[l]
M_{r_2} \ar@{^(->}[d]  & \ar@{->>}[l] \dots  \\
%
%
R^j\hat S \big( \tilde \Omega _{r_0}\big)_{\eta} & \ar[l] \ar@{->>}[ul]
R^j\hat S \big( \tilde \Omega _{r_1}\big)_{\eta}     & \ar[l]   \ar@{->>}[ul] 
R^j\hat S \big( \tilde \Omega _{r_2}\big)_{\eta} & \ar[l]  \ar@{->>}[ul]  \dots
}
\end{equation} 
Applying $\_ \otimes k(\eta)$ yields the following other commutative diagram, using that tensoring is right-exact:
 \begin{equation}
 \label{eq:main:inverse_diagram_after_tensor}
\xymatrix{
M_{r_0} \otimes k(\eta) \ar[d]  & \ar@{->>}[l] 
M_{r_1} \otimes k(\eta)\ar[d]  & \ar@{->>}[l]
M_{r_2} \otimes k(\eta) \ar[d]  & \ar@{->>}[l] \dots  \\
%
%
R^j\hat S \big( \tilde \Omega _{r_0}\big) \otimes k(\eta)& \ar[l] \ar@{->>}[ul]
R^j\hat S \big( \tilde \Omega _{r_1}\big) \otimes k(\eta)     & \ar[l]   \ar@{->>}[ul] 
R^j\hat S \big( \tilde \Omega _{r_2}\big) \otimes k(\eta) & \ar[l]  \ar@{->>}[ul]  \dots
}
\end{equation} 
As equation \eqref{eq:main:inverse_diagram_before_tensor} yields a relation  as in Lemma \ref{lem:direct_systems_isomorphic} between the inverse systems  $\Big(R^j\hat S\big( \tilde \Omega _r\big)_{\eta}  \Big)_{r \geq 0}$ and $\big( M_r \big)_{r \geq 0}$. Hence, we have  $\varprojlim_r \Big(R^j\hat S \big( \tilde \Omega _r\big)_{\eta}  \Big)\cong \varprojlim_r M_r$.
Combining this with Step 2 we obtain $0 \neq \varprojlim_r M_r$. In particular,  $M_r \neq 0$ for all $r \gg 0$.  Hence, using that $M_{r+1} \otimes k(\eta) \to M_r \otimes k(\eta)$ is surjective, $0 \neq\varprojlim_r \big( M_r \otimes k( \eta)\big)$. Then, as diagram \eqref{eq:main:inverse_diagram_after_tensor} yields also a relation as in Lemma \ref{lem:direct_systems_isomorphic}, we obtain point \eqref{itm:main:limit}.
\end{proof}

For the next statement recall that when working with elements of $D(X)$ and its variants, we also use the abbreviation $H^i(X, \_) = R^i \Gamma(X,\_)$.

\begin{corollary}
\label{cor:coho_and_base-change}
In the situation of Notation \ref{notation:pt_of_A_hat}, we have:
\begin{equation}
\label{eq:coho_and_base-change:statement}
H^i \Big(A \times \eta,  \sP_{\eta} \otimes \pr_{\eta}^* \tilde \Omega_e\Big) 
\cong R^i \hat S \big(\tilde \Omega_e\big) \otimes k(\eta)  
\cong
\left\{
\begin{matrix}
0  & \textrm{if }i > j-d \\
 \neq 0 & \textrm{if }i = j-d\textrm{ and }\tilde \Lambda_{e, - \eta} \neq 0
\end{matrix}
\right.
\end{equation}
\end{corollary}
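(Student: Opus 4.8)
The plan is to deduce Corollary \ref{cor:coho_and_base-change} directly from Lemma \ref{lem:main} together with cohomology-and-base-change for the Fourier--Mukai transform. The second isomorphism in \eqref{eq:coho_and_base-change:statement}, namely the description of $R^i\hat S(\tilde\Omega_e)\otimes k(\eta)$ in terms of $j$, $d$ and whether $\tilde\Lambda_{e,-\eta}$ vanishes, is nothing but the combination of points \eqref{itm:main:vanishing} and \eqref{itm:main:non_zero} of Lemma \ref{lem:main}: point \eqref{itm:main:vanishing} gives the vanishing for $i>j-d$, and point \eqref{itm:main:non_zero} gives the non-vanishing for $i=j-d$ under the hypothesis $\tilde\Lambda_{e,-\eta}\neq 0$. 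So the only thing left to establish is the first isomorphism, identifying $H^i(A\times\eta,\ \sP_\eta\otimes\pr_\eta^*\tilde\Omega_e)$ with $R^i\hat S(\tilde\Omega_e)\otimes k(\eta)$.

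For that first isomorphism I would unwind the definition of the Fourier--Mukai transform: by definition $R\hat S(\tilde\Omega_e)=Rp_{\hat A,*}(Lp_A^*\tilde\Omega_e\otimes\sP)$, so $R^i\hat S(\tilde\Omega_e)$ is the $i$-th higher direct image of the complex $Lp_A^*\tilde\Omega_e\otimes\sP$ under the projection $p_{\hat A}:A\times\hat A\to\hat A$. The fibre of $p_{\hat A}$ over $\eta$ is $A\times\eta$, and the restriction of $Lp_A^*\tilde\Omega_e\otimes\sP$ to that fibre is $\pr_\eta^*\tilde\Omega_e\otimes\sP_\eta$. Thus the claim is exactly an instance of derived base change / cohomology-and-base-change: one pulls back along the (flat, in fact regular) inclusion $\{\eta\}\hookrightarrow\hat A$, or rather along $\Spec k(\eta)\to\hat A$, and the base change map $R^i\hat S(\tilde\Omega_e)\otimes k(\eta)\to H^i(A\times\eta,\sP_\eta\otimes\pr_\eta^*\tilde\Omega_e)$ is an isomorphism. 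The cleanest formulation is the flat base-change theorem for $Rp_{\hat A,*}$ applied to the Cartesian square with $p_A$ proper; since $\tilde\Omega_e\in D^b_c(A)$ and $\sP$ is a line bundle, the complex pushed forward is in $D^b_c$, and flat base change holds for arbitrary (not necessarily flat) base change of a proper morphism in the derived category, provided one uses the correct derived pullback --- here the pullback to the residue field $k(\eta)$, hence the $\otimes k(\eta)$ on the left-hand side is automatically the \emph{derived} tensor product because we are taking it against a residue field. Alternatively, one can phrase everything via the projection formula: $R\hat S(\tilde\Omega_e)=Rp_{\hat A,*}(Lp_A^*\tilde\Omega_e\otimes\sP)$ and then restrict.

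In more detail, the steps I would carry out are: (i) write $R\hat S(\tilde\Omega_e)=Rp_{\hat A,*}\big(Lp_A^*\tilde\Omega_e\otimes\sP\big)$; (ii) note $Lp_A^*\tilde\Omega_e\otimes\sP\in D^b_c(A\times\hat A)$ and $p_{\hat A}$ is proper; (iii) invoke derived base change along $\iota_\eta:\Spec k(\eta)\to\hat A$, which for a proper morphism is an isomorphism $L\iota_\eta^*\circ Rp_{\hat A,*}\cong Rp_{\eta,*}\circ L(\mathrm{id}_A\times\iota_\eta)^*$ on $D^b_c$ (base change in the derived category, no flatness needed on the base because we derive the pullback); (iv) identify $L(\mathrm{id}_A\times\iota_\eta)^*\big(Lp_A^*\tilde\Omega_e\otimes\sP\big)\cong \pr_\eta^*\tilde\Omega_e\otimes\sP_\eta$, using that restricting $\sP$ to $A\times\eta$ gives $\sP_\eta$ and that pulling back a complex then restricting to a fibre of the other projection is the pullback along $\pr_\eta$; (v) observe that $Rp_{\eta,*}$ of a complex on $A\times\eta=A_{k(\eta)}$ computes $H^\bullet(A\times\eta,-)$, and that $L\iota_\eta^*\big(R\hat S(\tilde\Omega_e)\big)$ has $i$-th cohomology $R^i\hat S(\tilde\Omega_e)\otimes k(\eta)$ in the sense of the statement. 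Then combine with Lemma \ref{lem:main} parts \eqref{itm:main:vanishing} and \eqref{itm:main:non_zero} to conclude \eqref{eq:coho_and_base-change:statement}. The main (indeed only) subtlety is book-keeping about derived versus underived pullback in step (iii)--(iv): one must be careful that the $\otimes k(\eta)$ appearing in the statement, and the implicit hypercohomology spectral sequence from $\sH^q(\tilde\Omega_e)$ to $R^i\hat S$, are handled consistently; but since $\tilde\Omega_e$ is a genuine complex of coherent sheaves and everything lives in $D^b_c$, the standard derived base-change formalism applies with no further hypotheses, so this is routine rather than a genuine obstacle.
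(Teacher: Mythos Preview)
Your overall framework is the same as the paper's: derived base change along $\iota:\Spec k(\eta)\to\hat A$ gives $L\iota^*R\hat S(\tilde\Omega_e)\cong R\Gamma\big(A\times\eta,\sP_\eta\otimes\pr_\eta^*\tilde\Omega_e\big)$, and then one compares cohomology degree by degree. The gap is in your step~(v). The assertion that $\sH^i\big(L\iota^*R\hat S(\tilde\Omega_e)\big)\cong R^i\hat S(\tilde\Omega_e)\otimes k(\eta)$ is not valid in general: for a bounded complex $\sC$ there is a spectral sequence $E_2^{p,q}=L_{-p}\iota^*\,\sH^q(\sC)\Rightarrow\sH^{p+q}(L\iota^*\sC)$, and the higher Tor's of the sheaves $R^q\hat S(\tilde\Omega_e)$ against $k(\eta)$ can and do interfere. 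Your remark that the ordinary tensor product against $k(\eta)$ is ``automatically the derived tensor product because we are taking it against a residue field'' is simply incorrect---$k(\eta)$ is not flat over $\sO_{\hat A}$ unless $\eta$ is the generic point.

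What makes the argument work---and this is the step you dismiss as ``routine'' but is in fact the content of the corollary---is that the vanishing from Lemma~\ref{lem:main}\eqref{itm:main:vanishing} must be fed back into the base-change computation, not applied only afterwards. Because $R^i\hat S(\tilde\Omega_e)_\eta=0$ for $i>j-d$, on a neighbourhood $U$ of $\eta$ one has an exact triangle
\[
\tau_{<j-d}R\hat S(\tilde\Omega_e)\big|_U\longrightarrow R\hat S(\tilde\Omega_e)\big|_U\longrightarrow R^{j-d}\hat S(\tilde\Omega_e)[d-j]\big|_U\xrightarrow{+1},
\]
and only then, using that $L^s\iota^*$ applied to a single sheaf vanishes for $s>0$ and equals $(\,\cdot\,)\otimes k(\eta)$ for $s=0$, does one extract $H^i(A\times\eta,\sP_\eta\otimes\pr_\eta^*\tilde\Omega_e)=0$ for $i>j-d$ and $\cong R^{j-d}\hat S(\tilde\Omega_e)\otimes k(\eta)$ for $i=j-d$. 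The paper carries this out explicitly; in your write-up the order is reversed (first identify, then invoke Lemma~\ref{lem:main}), and with that order step~(v) is unjustified.
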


\begin{proof}
Lemma \ref{lem:main} shows the second isomorphism of \eqref{eq:coho_and_base-change:statement}. So, we are left to show that for $i>j-d$ the left hand side of \eqref{eq:coho_and_base-change:statement} is $0$, and for $i=j-d$ it is isomorphic to $R^i \hat S \big( \tilde \Omega_e \big) \otimes k(\eta)$.
Let $\iota : \eta \to \hat A$,  $j : A \times \eta \to A \times \hat A$  and $q  : A \times \hat A \to A$  be the natural morphisms. Then:
\begin{equation}
\label{eq:coho_and_base-change:derived}
L \iota^* R \hat S \left( \tilde \Omega_e \right) 
\expl{\cong}{\cite[\href{https://stacks.math.columbia.edu/tag/08IR}{Tag 08IR}]{stacks-project}}
 R \Gamma\Big(A \times \eta,   L j^* \big( \sP \otimes q^* \tilde \Omega_e \big) \Big)
\explparshift{260pt}{80pt}{ \cong}{ as $\sP$ is a line bundle and as $q$ and $\pr_{ \eta}$ are flat, we have $L j^* \big( \sP \otimes q^* \tilde \Omega_e \big) \cong (j^* \sP) \otimes L j^* q^* \tilde \Omega_e \cong   \sP_{\eta} \otimes \pr_{\eta}^* \tilde \Omega_e$}
 R \Gamma \Big(A \times \eta,  \sP_{\eta} \otimes \pr_{\eta}^* \tilde \Omega_e \Big)
\end{equation}
According to Lemma \ref{lem:main}, for every integer $i>j-d$, we have $R^i \hat S\big(\tilde \Omega_e \big)_{\eta}=0$. Hence, there is an open neighborhood $ \eta \in U \subseteq \hat A$ and a distinguished triangle
\begin{equation}
\label{eq:coho_and_base-change:triangle}
\xymatrix{
\tau_{< j-d}  R \hat S \big( \tilde \Omega_e \big) |_U \ar[r] & R \hat S \big( \tilde \Omega_e \big) |_U \ar[r] & R^{j-d} \hat S \big(\tilde \Omega_e\big) [d-j]|_U \ar[r]^(0.8){+1} &
}
\end{equation}
Note that as $\tau_{< j-d}  R \hat S \big( \tilde \Omega_e \big)$ is supported in cohomological degrees smaller than $j-d$, equations \eqref{eq:coho_and_base-change:derived}  and  \eqref{eq:coho_and_base-change:triangle} imply that for every integer $i \geq j-d$
\begin{equation}
\label{eq:coho_and_base-change:final}
H^i \Big(A \times \eta,  \sP_{\eta} \otimes \pr_{\eta}^* \tilde \Omega_e\Big)  \cong L^i \iota^* \left( R^{j-d} \hat S \big(\tilde \Omega_e\big) [d-j] \right) = L^{i-(j-d)} \iota ^*\Big( R^{j-d} \hat S \big(\tilde \Omega_e\big) \Big).
 \end{equation}
Using that $L^s \iota^*(\_)$ applied to a sheaf is zero for $s>0$ and it is the same as the application of $(\_) \otimes k(\eta)$ for $s=0$, we obtain from \eqref{eq:coho_and_base-change:final} exactly the description of the left hand side of \eqref{eq:coho_and_base-change:statement} we were looking for. 
\end{proof}

\begin{corollary}
\label{cor:V_i_Omega_tilde_vanishing_non_vanishing}
In the situation of Notation \ref{notation:pt_of_A_hat}, define $W^i:=\bigcup_{r \geq i} \Supp R^r \hat S \big( \tilde \Omega_e \big)$. Then, for every integer $0 \leq i \leq g$ there are the following two options:
\begin{enumerate}
\item \label{itm:V_i_Omega_tilde_vanishing_non_vanishing:not_in_it} if $\eta \notin W^i $, then $H^i \Big(A \times \eta,  \sP_{\eta} \otimes \pr_{\eta}^* \tilde \Omega_e\Big) =0$ and $j-d <i$, and
\item \label{itm:V_i_Omega_tilde_vanishing_non_vanishing:in_it} if $\eta \in W^i \setminus W^{i+1}$, then 
$H^i \Big(A \times \eta,  \sP_{\eta} \otimes \pr_{\eta}^* \tilde \Omega_e\Big) \cong
R^{i} \hat S \big(\tilde \Omega_e\big) \otimes k(\eta) \neq 0$, and $j-d = i$. 
\end{enumerate}
\end{corollary}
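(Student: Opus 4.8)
The plan is to deduce the statement formally from Lemma~\ref{lem:main}, Corollary~\ref{cor:coho_and_base-change}, and the elementary equivalence, valid for a coherent sheaf $\sG$ on $\hat A$, that $\eta \in \Supp \sG$ if and only if $\sG \otimes k(\eta) \neq 0$ (Nakayama applied to the finitely generated $\sO_{\hat A,\eta}$-module $\sG_\eta$). First I would record what membership in $W^i$ means for the coherent sheaves $\sG_r := R^r\hat S(\tilde\Omega_e)$: by definition of $W^i = \bigcup_{r\geq i}\Supp \sG_r$ and the equivalence above, $\eta \notin W^i$ means $\sG_r \otimes k(\eta) = 0$ for all $r \geq i$, while $\eta \in W^i \setminus W^{i+1}$ means $\sG_r \otimes k(\eta) = 0$ for all $r \geq i+1$ together with $\sG_r \otimes k(\eta) \neq 0$ for some $r \geq i$; these last two conditions force $\sG_i \otimes k(\eta) \neq 0$.

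Next I would pin down $j-d$, which is where the dichotomy in the definition of $d$ in Notation~\ref{notation:pt_of_A_hat} enters. If $\tilde\Lambda_{e,-\eta} = 0$, then $d = j+1$, so $j-d = -1$, which is strictly below $i$ for every $i \geq 0$; this settles that sub-case in both (a) and (b). If $\tilde\Lambda_{e,-\eta} \neq 0$, then Lemma~\ref{lem:main}(\ref{itm:main:non_zero}) gives $\sG_{j-d} \otimes k(\eta) \neq 0$, i.e. $\eta \in \Supp \sG_{j-d}$; consequently, in case~(a) the inequality $j - d \geq i$ would put $\eta$ into $W^i$, a contradiction, so $j-d < i$, and in case~(b) the inequality $j-d \geq i+1$ would put $\eta$ into $W^{i+1}$, a contradiction, so $j-d \leq i$. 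For the reverse inequality in case~(b), Lemma~\ref{lem:main}(\ref{itm:main:vanishing}) gives $\sG_r \otimes k(\eta) = 0$ for all $r > j-d$; since $\sG_i \otimes k(\eta) \neq 0$ from the previous paragraph, necessarily $i \leq j-d$, hence $i = j-d$.

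Finally I would read off the cohomology statements. In case~(a) we have $i > j-d$, so Corollary~\ref{cor:coho_and_base-change} gives $H^i\big(A\times\eta, \sP_\eta \otimes \pr_\eta^*\tilde\Omega_e\big) = 0$, and $j-d < i$ was just shown. In case~(b) we have $i = j-d$ and also $\tilde\Lambda_{e,-\eta} \neq 0$ (the alternative $\tilde\Lambda_{e,-\eta}=0$ would give $j-d = -1 \neq i$), so Corollary~\ref{cor:coho_and_base-change} yields the isomorphism $H^i\big(A\times\eta, \sP_\eta\otimes\pr_\eta^*\tilde\Omega_e\big) \cong \sG_i \otimes k(\eta)$, and the right-hand side is non-zero by the first paragraph. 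I do not expect any step to be genuinely hard here; the only point requiring care is the bookkeeping between the stalk $\sG_{r,\eta}$, the fibre $\sG_r \otimes k(\eta)$, and membership $\eta \in \Supp \sG_r$ (reconciled by Nakayama), together with the consistent treatment of the degenerate locus where $\tilde\Lambda_{e,-\eta} = 0$ and $d = j+1$.
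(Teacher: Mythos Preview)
Your proof is correct and takes essentially the same approach as the paper: both deduce the corollary directly from Lemma~\ref{lem:main} and Corollary~\ref{cor:coho_and_base-change}, with your version simply unpacking in more detail what the paper compresses into a two-sentence sketch. The only expository wrinkle is that your phrase ``this settles that sub-case in both (a) and (b)'' for $\tilde\Lambda_{e,-\eta}=0$ is slightly premature for case~(b), but your subsequent reverse-inequality step (which uses Lemma~\ref{lem:main}\eqref{itm:main:vanishing} and holds regardless of whether $\tilde\Lambda_{e,-\eta}$ vanishes) correctly rules out that sub-case by forcing $i \leq j-d = -1 < 0$.
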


\begin{proof}

 One way to read Corollary \ref{cor:coho_and_base-change} is that as we descend with the index $i$, starting from $i=g$, if ever  the cohomology group $H^i \Big(A \times \eta,  \sP_{\eta} \otimes \pr_{\eta}^* \tilde \Omega_e\Big)  $ becomes non-zero, then the first time this happens  at $i=j-d$. Additionally, also by Corollary \ref{cor:coho_and_base-change},  for this value of $i$  we have $\eta \in W^i \setminus W^{i+1}$. This is worded precisely in the present corollary, taken into account that we have $W^g \subseteq W^{g-1} \subseteq \ldots$, and with the added expression for $H^{j-d} \Big(A \times \eta,  \sP_{\eta} \otimes \pr_{\eta}^* \tilde \Omega_e\Big)  $ coming from  Corollary \ref{cor:coho_and_base-change}.


\end{proof}

\begin{proof}[Proof of Theorem \ref{thm:GV}]
In all applications of the statements of the present section we set $e=0$, except for the proof of point \eqref{itm:GV:limit_full}, where this will be stated carefully.
Define $W^i$ as in Corollary \ref{cor:V_i_Omega_tilde_vanishing_non_vanishing}. Set $t$ to be the $e_1$ of Lemma \ref{lem:direct_systems_isomorphic}. That is, we have a factorization
\begin{equation*}
\xymatrix{
\Omega_t =F^{t  s}_* \Omega_0 \ar[r] \ar@/^1pc/[rr] & \tilde \Omega_0 \ar[r] & \Omega_0
}
\end{equation*}
Taking cohomology we obtain homomorphisms
\[ H^i\big(A,\mathcal P _y\otimes F^{t s}_*\Omega_0\big)\to H^i\big(A,\mathcal P _y\otimes \tilde \Omega_0\big)\to H^i\big(A,\mathcal P _y\otimes \Omega_0\big).\]
 By point \eqref{itm:V_i_Omega_tilde_vanishing_non_vanishing:not_in_it} of Corollary \ref{cor:V_i_Omega_tilde_vanishing_non_vanishing}, the middle term vanishes whenever $\eta \in \hat A \setminus W^i$. This implies  point \eqref{itm:GV:zero} of the present theorem.

Point \eqref{itm:V_i_Omega_tilde_vanishing_non_vanishing:in_it} of Corollary \ref{cor:V_i_Omega_tilde_vanishing_non_vanishing} implies that for every $\eta \in W^i \setminus W^{i+1}$ we have $j=\codim \eta \geq i$.  Hence, $i \leq \codim \big( W^i \setminus W^{i+1} \big)$.  Then, by downwards induction on $i$  it follows that $i \leq \codim W^i$. This is the statement of point \eqref{itm:GV:codim} of the present theorem.

Let us assume for the present paragraph that $\eta$ is the generic point of an irreducible  component of $W^i$ of codimension $i$. This is equivalent to assuming that $\codim \eta = i$. Then, as we have already showed point \eqref{itm:GV:codim},  we have $\eta \not\in W^r$ for $r>i$. Hence, by point \eqref{itm:V_i_Omega_tilde_vanishing_non_vanishing:in_it} of Corollary \ref{cor:V_i_Omega_tilde_vanishing_non_vanishing}, we obtain  that $\depth_{\sO_{\hat A, -\eta}} \tilde \Lambda_{0, - \eta}=0$. That is, $- \eta$ is an associated prime of $\tilde \Lambda_0$.  Note now that by Corollary \ref{cor:coho_and_base-change}, cohomology and base-change holds at $\eta$ for $\sF_e:=R^i \pr_* \big( \sP \otimes q^* \tilde \Omega_e \big)$, where $\pr : A \times \hat A \to \hat A$ and $q : A \times \hat A \to A$ are the projections. Then, it also holds for very general closed points of $\overline{\eta}$, for every integer $e \geq 0$. Point \eqref{itm:main:limit} of Lemma \ref{lem:main} also states that $\varprojlim_e \big( \sF_e  \otimes k(\eta) \big)\neq 0$, which then holds for $\eta$ replaced by very general closed points of $\overline{\eta}$ too. That is, we obtain that for very general closed point $y \in \overline{\eta}$ we have $0 \neq \varprojlim_e H^i\big(A, \sP_y \otimes \tilde \Omega_e \big) $. Then, by Lemma \ref{lem:direct_systems_isomorphic} we obtain that $
0 \neq \varprojlim_e H^i\big(A, \sP_y \otimes \Omega_e \big)$ holds too. This concludes the proof of point \eqref{itm:GV:limit}.

To obtain point \eqref{itm:GV:supersingular_factor}, take $\eta$ as in the situation of the previous paragraph. As we have seen there, $-\eta$ is an associated point of $\tilde \Lambda_0$. Hence, by Corollary \ref{cor:torsion_translate_of_ab_var}, we obtain that $\overline{\eta}$ is the torsion translate of an abelian subvariety.

To prove point \eqref{itm:GV:limit_full} of the theorem let us introduce the notations $W^i_e$, which are defined as $W^i$ but for arbitrary $e$ instead of just $e=0$. As $R^j \hat S \big(  \tilde \Omega_e \big) \cong R^j \hat S \big( F^{es}_* \tilde \Omega_0 \big) \cong V^{es,*} R^j \hat S\big(  \tilde \Omega_0\big)$, we have $W^i_e  = V^{e s,*} W^i = \left[p^{es}\right]^{-1} W^i$. Hence, points \eqref{itm:GV:zero} and \eqref{itm:GV:limit}, which we already proved, hold for $\Omega_0$ replaced by $\Omega_e$ if we also replace $W^i$ by $ \left[p^{es}\right]^{-1} W^i$. This $\Omega_e$ version of point \eqref{itm:GV:limit}  yields the non-vanishing part of point \eqref{itm:GV:limit_full} directly. Additionally, by taking inverse limits of the $\Omega_e$ versions of point \eqref{itm:GV:zero} we obtain the vanishing part of point \eqref{itm:GV:limit_full}.

Point \eqref{itm:GV:depth} follows directly from Corollary \ref{cor:V_i_Omega_tilde_vanishing_non_vanishing}, using that being an embedded point of $\Omega_0$ is the same as $\depth_{\sO_{\hat A, - \eta}} \tilde \Lambda_{0, - \eta}$ being zero.

\end{proof}

\section{$V$-modules and examples}
\label{sec:V_modules_examples}

{
The main purpose of this section is to show that many properties of cohomology support loci of Cartier modules on $A$ can be understood by understanding $V$-modules on $\hat A$, which are defined in the next definition.

\begin{definition}
\label{def:V_module}
A \emph{$V$-module} is a pair $(\sM, \phi)$, where $\sM$ is a coherent sheaf on $\hat A$, and $\phi: \sM \to V^{s,*} \sM$ is an $\sO_{\hat A}$-homomorphism for some integer $s>0$. A $V$-module is injective if the structure homomorphism $\phi$ is injective. 
\end{definition}

The main technical statement is the following, which can be thought of as the dual of \cite[Thm 3.1.1]{HP16}.

\begin{theorem}
\label{thm:V_module}
If $\sM \to V^{s,*} \sM$ is a $V$-module, then $\Big( RS \big( D_{\hat A} (  V^{es,*} \sM )\big) \cong F^{es}_* RS\big(D_{\hat A} (\sM )\big)\Big)_{e \geq 0}$ is  an inverse system associated to a derived Cartier module, see Definition \ref{def:derived_Cartier_modules}, and it is nilpotent in all cohomological degrees outside  $0$.
\end{theorem}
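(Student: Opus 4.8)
This statement is the ``dual'' of \cite[Thm 3.1.1]{HP16}, and the plan is to dualize its proof. The formal part first: Grothendieck duality commutes with pullback along the isogeny $V^{es}$ (whose relative dualizing complex is trivial, being an isogeny of abelian varieties), so $D_{\hat A}(V^{es,*}\sM)\cong V^{es,*}D_{\hat A}(\sM)$; and Mukai's Theorem \ref{thm:Mukai}, in the form $\psi_*\circ RS\cong RS\circ\hat\psi^*$ applied with $\psi=F^{es}$, $\hat\psi=\widehat{F^{es}}=V^{es}$ (using $F=\hat V$), gives $RS(V^{es,*}\,\_\,)\cong F^{es}_*RS(\_)$. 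Naturality of all these isomorphisms identifies the transition maps of $\big(\Xi_e:=RS(D_{\hat A}(V^{es,*}\sM))\big)_{e\ge 0}$ with $F^{es}_*$ of $RS(D_{\hat A}(\phi))$, so this is an inverse system associated to a derived Cartier module with $\Xi_0=RS(D_{\hat A}(\sM))$. By Remark \ref{rem:derived_Cartier_module}, each $\sH^i(\Xi_0)$ is then an honest Cartier module on $A$, and ``nilpotent in degree $i$'' is exactly the assertion that $\sH^i(\Xi_0)$ is a nilpotent Cartier module, i.e.\ $\varprojlim_e\sH^i(\Xi_e)=0$.

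\textbf{Positive degrees.} These come for free once one shows $\Xi_0$ is concentrated in cohomological degrees $\le 0$. I would run the spectral sequence $R^pS\big(\sH^q(D_{\hat A}\sM)\big)\Rightarrow\sH^{p+q}(\Xi_0)$: here $\sH^q(D_{\hat A}\sM)=\mathcal{E}xt^{\,q+g}_{\hat A}(\sM,\sO_{\hat A})$ is supported in codimension $\ge q+g$, hence in dimension $\le -q$, so its Fourier--Mukai transform lives in cohomological degrees $[0,-q]$ and only contributes to total degree $p+q\le 0$. Thus $\sH^i(\Xi_e)=0$ for $i>0$ and the system is trivially nilpotent there.

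\textbf{Negative degrees (the crux).} It remains to prove $\sH^i(\Xi_0)$ is a nilpotent Cartier module for $-g\le i\le -1$. Replacing $\Xi_0$ by $\tau_{\le-1}\Xi_0$ and running the Grothendieck--duality/truncation reduction from the proof of Lemma \ref{lem:nilpotent_derived_Cartier_module} (together with Lemma \ref{lem:coho_zero_implies_zero} and part \eqref{itm:nilpotent_not_nilpotent:nilpotent} of Lemma \ref{lem:nilpotent_not_nilpotent}), this becomes the statement that the direct system $\big(D_A(\tau_{\le-1}\Xi_e)=F^{es}_*D_A(\tau_{\le-1}\Xi_0)\big)_{e\ge 0}$, which sits in cohomological degrees $[1,g]$, is nilpotent in every degree. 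I would prove this by a Serre--vanishing argument modeled on \cite[Thm 3.1.1, Lem 3.1.2]{HP16}: for an ample line bundle $H$ on $A$, the projection formula, Mukai's theorem and Serre duality on $\hat A$ identify $\mathbb H^n(A,\Xi_e\otimes H)\cong\mathbb H^n(A,\Xi_0\otimes H^{p^{es}})\cong H^{-n}\big(\hat A,\sM\otimes(\widehat{H^{p^{es}}})^{\vee}\big)^{\vee}$, and for $e\gg 0$ Serre vanishing on $A$ degenerates the hypercohomology spectral sequence so that this equals $H^0(A,\sH^n(\Xi_e)\otimes H)$; one then uses the $V$-module structure $\phi$ and the relation $V^{es,*}$ between the Fourier--Mukai bundles $(\widehat{H^{p^{es}}})^{\vee}$ and $(\widehat H)^{\vee}$ to show the transition maps of the inverse system $\big(\mathbb H^n(A,\Xi_e\otimes H)\big)_e$ are eventually zero for every $n\ne 0$, and finally bootstraps (over all sufficiently ample twists, via Proposition \ref{prop:inverse_systems_map} and the lemmas of Section \ref{sec:nilpotent_systems}) from this vanishing of limits of cohomology groups to the vanishing of the stable images of the $\sH^i(\Xi_0)$.

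\textbf{Expected main obstacle.} The delicate point is precisely this last step: the bundles $(\widehat{H^{p^{es}}})^{\vee}$ do not simply become ``more ample'' as $e\to\infty$, so the required vanishing cannot come from naive Serre vanishing applied to $\sM$ alone; it has to use the $V$-module structure $\phi$ essentially, exactly as \cite[Thm 3.1.1]{HP16} uses the Cartier module structure of $\Omega_0$. Pinning down this interaction, and converting the resulting cohomological vanishing into sheaf-level nilpotence in all negative cohomological degrees at once, is where the real work lies.
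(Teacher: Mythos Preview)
Your formal setup and the positive-degrees argument are fine and match the paper. The negative-degrees part, however, is where you diverge, and your own ``expected main obstacle'' is real: the step from hypercohomological vanishing with ample twists to sheaf-level nilpotence in all negative degrees is not pinned down, and the dualized Serre-vanishing argument you sketch runs into the issue you already flag, namely that the bundles $(\widehat{H^{p^{es}}})^\vee \cong V^{es}_*(\widehat H)^\vee$ do not become more ample on $\hat A$. Unwinding, your computation reduces to showing that for every $n<0$ the direct system $\big(H^{-n}(\hat A,\,V^{es,*}\sM\otimes(\widehat H)^\vee)\big)_{e\ge 0}$ is nilpotent; but $(\widehat H)^\vee$ is a \emph{fixed} vector bundle, so there is no ampleness gain at all and you are essentially asking for the statement you are trying to prove. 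Your proposal does not explain how the $V$-module map $\phi$ rescues this, and I do not see a straightforward way to make it do so without, in effect, reproving the theorem.

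The paper's proof avoids this entirely by a much shorter devissage that \emph{uses} \cite[Thm~3.1.1]{HP16} as a black box rather than dualizing its proof. Set $\sK_e:=RS(D_{\hat A}(V^{es,*}\sM))$ and observe two things: (i) each $\sH^i(\sK_0)$ is an honest Cartier module on $A$, so by \cite[Thm~3.1.1]{HP16} the direct system $\big(R\hat S(D_A(\sH^i(\sK_e)))\big)_{e\ge 0}$ is nilpotent outside cohomological degree $0$; and (ii) by Mukai inversion, $R\hat S(D_A(\sK_e))\cong V^{es,*}\sM$ is a sheaf, concentrated in degree $0$. Now argue by contradiction: let $j<0$ be the smallest degree in which $(\sH^j(\sK_e))_e$ is not nilpotent. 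Apply $R\hat S\circ D_A$ to the truncation triangles $\tau_{\ge i+1}\sK_e\to\tau_{\ge i}\sK_e\to\sH^i(\sK_e)[-i]$ and run Lemma~\ref{lem:nilpotent_not_nilpotent}: point \eqref{itm:nilpotent_not_nilpotent:nilpotent} together with (i) shows inductively that $\big(R\hat S(D_A(\tau_{\ge i}\sK_e))\big)_e$ is nilpotent in degree $-j+1$ for all $i$; then point \eqref{itm:nilpotent_not_nilpotent:not_nilpotent} together with the non-nilpotence of $\big(R\hat S(D_A(\sH^j(\sK_e)))[j]\big)_e$ in degree $-j$ (again from (i), since $-j>0$ shifts the surviving degree $0$ to $-j$) propagates non-nilpotence in degree $-j$ down to $i=-g$, i.e.\ to $\big(R\hat S(D_A(\sK_e))\big)_e\cong(V^{es,*}\sM)_e$. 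That contradicts (ii). No Serre-vanishing computation is needed; the whole proof is a formal manipulation with truncation triangles and Lemma~\ref{lem:nilpotent_not_nilpotent}, leveraging the already-proven Cartier-module case on the cohomology sheaves.
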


\begin{proof}
First we note that the isomorphism $RS \big(D_{\hat A} (  V^{es,*} \sM )\big) \cong F^{es}_* RS \big( D_{\hat A} (\sM)\big)$ is obtained by applying Theorem \ref{thm:Mukai} multiple times:
\begin{multline}
\label{eq:V_module:derived_Cartier_module}
RS \big(D_{\hat A} (  V^{es,*} \sM )\big)
\cong
(-1_A)^* D_A \big(R S(V^{es,*} \sM)\big)[-g]
\cong
(-1_A)^*  D_A \big(F^{es}_*R S( \sM)\big)[-g]
\\[2pt] \cong
F^{es}_* (-1_A)^*  D_A \big(R S( \sM)\big)[-g]
\cong
 F^{es}_* RS \big(D_{\hat A} (\sM)\big).
\end{multline}
Set then:
\begin{equation*}
\sM_e:=V^{es,*} \sM \textrm{, and } \sK_e:=R S ( D_{\hat A}( \sM_e))\cong F^{es}_* \sK_0.
\end{equation*}
As $(\sM_e)_{e \geq 0}$ is associated to a $V$-module, and as the  isomorphisms of \eqref{eq:V_module:derived_Cartier_module} are functorial, $(\sK_e)_{\geq 0}$ is associated to the derived Cartier module $\sK_1 \cong F^s_* \sK_0 \to \sK_0$. 
Note that $\sK_e$ are supported in the same cohomological degrees, which lie in the interval $[-g,0]$. So, assume that $(\sK_e)_{e \geq 0}$ is not nilpotent in all cohomological degrees outside of $0$. Let $j$ be the lowest cohomological degree in which $(\sK_e)_{\geq 0}$ is not nilpotent. By our assumption, to which we would like to find a contradiction, we have $j<0$.
Consider then the following exact triangles for every $i$ and $e$:
\begin{equation}
\label{eq:V_module:main_triangle}
\xymatrix{
R \hat S(D_A(\tau_{\geq i+1} \sK_e)) \ar[r] & R \hat S(D_A(\tau_{\geq i} \sK_e)) \ar[r] &   R \hat S(D_A(\sH^i ( \sK_e)))[i] \ar[r]^(0.9){+1} &
} 
\end{equation}
\emph{We claim that the direct system $\Big(R \hat S\big(D_A(\sH^i ( \sK_e))\big)[i]\Big)_{e \geq 0}$ is nilpotent outside of cohomological degrees (strictly) higher than $-j$.} Indeed, as $\big(\sH^i ( \sK_e)\big)_{\geq 0}$ is an inverse system associated to a Cartier module, by  \cite[Thm 3.1.1]{HP16} we obtain that $\Big(R \hat S\big(D_A(\sH^i ( \sK_e))\big)\Big)_{e \geq 0}$ is nilpotent outside of cohomological degree zero. This solves the claim for $i \geq j$. For $i<j$ on the other hand $\big(\sH^i(\sK_e) \big)_{e \geq 0}$ itself is nilpotent by the choice of $j$, and hence so is $\big(R \hat S(D_A(\sH^i ( \sK_e)))\big)_{e \geq 0}$ in all cohomological degrees. This concludes our claim. 

By induction on $i$, using our above claim, point \eqref{itm:nilpotent_not_nilpotent:nilpotent} of Lemma  \ref{lem:nilpotent_not_nilpotent}  and the exact triangles \eqref{eq:V_module:main_triangle}, we obtain that $\Big( R \hat S \big(D_A(\tau_{\geq i} \sK_e) \big) \Big)_{e \geq 0}$ is nilpotent in cohomogical degree $-j+1$ for all $i$. Additionally, by the arguments with which we proved the above claim, using \cite[Thm 3.1.1]{HP16} again, we see that $\Big(R \hat S \big(D_A(\sH^j ( \sK_e))\big)[j]\Big)_{e \geq 0}$ is not nilpotent in cohomological degree $-j$. Applying \eqref{itm:nilpotent_not_nilpotent:not_nilpotent} of Lemma \ref{lem:nilpotent_not_nilpotent} to the triangle \eqref{eq:V_module:main_triangle} for $i \leq j$ we obtain inductively that $\Big( R \hat S \big(D_A(\tau_{\geq i} \sK_e) \big) \Big)_{e \geq 0}$ is not nilpotent in cohomological degree $-j$ for every integer $i \leq j$. For $i=-g$ this yields that $\Big( R \hat S \big(D_A( \sK_e)\big) \Big)_{e \geq 0}  \cong \big( \sM_e\big)_{e \geq 0}$ is not nilpotent in cohomological degree $-j>0$. This is a contradiction as $\sM$ is a sheaf. 
\end{proof}

The next corollary states that many features of cohomology support loci of Cartier modules on $A$ can be understood by just understanding $V$-modules on $\hat A$, from which the corresponding properties can be read off directly. An example precise statement in Corollary \ref{cor:V_module_comes_from_Cartier_module} is that this works for understanding the behavior of the loci
\begin{equation*}
W^0_F=  \bigg\{ \ Q \in \hat A\  \bigg| \ \varprojlim_e H^0\big(A,\Omega_0 \otimes Q^{p^e} \big) \ne 0\ \bigg\}
\end{equation*}
for any Cartier module $\Omega_0$ on $A$. In fact, Corollary \ref{cor:V_module_comes_from_Cartier_module} collects only some of the properties that can be read off directly from $V$-modules. We hope this list will be further extended in later articles. 

\begin{corollary}
\label{cor:V_module_comes_from_Cartier_module}
If $\sM \to V^{s,*} \sM$ is an injective $V$-module, then we have an embedding of $V$-modules, 
\begin{equation*}
\xymatrix{
\tilde{\Lambda}_0 \ar@{^(->}[r] \ar@{^(->}[d] & V^{s,*} \tilde{\Lambda}_0 = \tilde{\Lambda}_1 \ar@{^(->}[d] \\
\sM \ar@{^(->}[r] & V^{s,*} \sM,
}
\end{equation*}
where $\tilde{\Lambda}_e$ is as in Notation \ref{notation:basic} by setting $\Omega_0:=\sH^0(RS (D_{\hat A}( \sM)))$, and additionally we have $\varinjlim_e V^{es,*} \sM = \varinjlim_e \tilde \Lambda_e$. 

In particular, 
\begin{enumerate}
\item \label{itm:V_module_comes_from_Cartier_module:inverse}
for every closed point $y \in \hat A$ the direct systems $\Big( H^0\big(A,\Omega_0 \otimes \sP_{-p^ey} \big)^{\vee} \Big)_{e \geq 0}$ and $\big(k(y) \otimes V^{es,*} \sM \big)_{e \geq 0}$ are equivalent in the sense of Lemma \ref{lem:direct_systems_isomorphic}. Therefore,  we have
\begin{equation*}
\varprojlim_e H^0\big(A,\Omega_0 \otimes \sP_{-p^ey} \big) \cong  \bigg( \varinjlim_e k(y) \otimes V^{es,*} \sM \bigg)^\vee
\end{equation*}
\item \label{itm:V_module_comes_from_Cartier_module:abelian_subvariety}
if $A$ has no supersingular factors, the abelian subvarieties appearing in Theorem \ref{thm:GV} for the above choice of $\Omega_0$ can be read of  from $\sM$: the embedded points of $\sM$ are torsion translates of exactly these abelian subvarieties. 

\end{enumerate}
\end{corollary}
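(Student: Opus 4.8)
The plan is to realize $\Omega_0$ as a Cartier module, bring in the Fourier--Mukai picture of Notation~\ref{notation:basic}, and then compare the inverse/direct systems attached to $\Omega_0$ with those attached to $\sM$ by means of the nilpotence lemmas of Section~\ref{sec:nilpotent_systems}. \emph{Set-up.} First, put $\sK_e := RS\big(D_{\hat A}(V^{es,*}\sM)\big)\cong F^{es}_* RS\big(D_{\hat A}(\sM)\big)$; by Theorem~\ref{thm:V_module} this is an inverse system associated to a derived Cartier module, supported in cohomological degrees $[-g,0]$, nilpotent outside degree $0$, with $\sH^0(\sK_e)=\Omega_e:=F^{es}_*\Omega_0$. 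Applying $\sH^0\circ RS\circ D_{\hat A}$ to $\phi:\sM\to V^{s,*}\sM$ and using the functorial isomorphisms of \eqref{eq:V_module:derived_Cartier_module} equips $\Omega_0$ with a Cartier module structure $F^s_*\Omega_0\to\Omega_0$, so that all notation of Notation~\ref{notation:basic} becomes available for this $\Omega_0$.

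\emph{The comparison triangle and the embedding.} Then, for each $e$, apply the contravariant triangulated functor $R\hat S\circ D_A$ to the truncation triangle $\tau_{\leq-1}\sK_e\to\sK_e\to\Omega_e\xrightarrow{+1}$. By Mukai's inversion (Theorem~\ref{thm:Mukai}) one has $R\hat S(D_A(\sK_e))\cong V^{es,*}\sM$ and $R\hat S(D_A(\Omega_e))=\Lambda_e$, which produces an exact triangle of direct systems
\[
\Lambda_\bullet\longrightarrow V^{\bullet s,*}\sM\longrightarrow N_\bullet\overset{+1}{\longrightarrow},\qquad N_e:=R\hat S\big(D_A(\tau_{\leq-1}\sK_e)\big).
\]
Because $(\sK_e)$ is nilpotent outside degree $0$, the inverse system $(\tau_{\leq-1}\sK_e)$ is nilpotent in every cohomological degree, and being associated to a derived Cartier module it is nilpotent by Lemma~\ref{lem:nilpotent_derived_Cartier_module}; hence the direct system $(N_e)$ is nilpotent too. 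Taking the long exact cohomology sequence of the triangle and passing to $\varinjlim_e$ annihilates all $N_e$-terms and gives $\Lambda=\varinjlim_e\sH^0(\Lambda_e)\xrightarrow{\ \sim\ }\varinjlim_e V^{es,*}\sM$; since $\varinjlim_e\tilde\Lambda_e=\Lambda$, this is the asserted equality $\varinjlim_e V^{es,*}\sM=\varinjlim_e\tilde\Lambda_e$. For the embedding of $V$-modules: $\phi$ is injective and $V^{s,*}$ is faithfully flat, so every transition map $V^{es,*}\sM\hookrightarrow V^{e's,*}\sM$ is injective, whence $V^{es,*}\sM\hookrightarrow\Lambda$; a diagram chase with $f_e:=\sH^0$ of the first triangle map then shows that $f_e$ kills $\ker\big(\sH^0(\Lambda_e)\to\Lambda\big)$ and identifies $\tilde\Lambda_e=\im\big(\sH^0(\Lambda_e)\to\Lambda\big)$ with $\im(f_e)\subseteq V^{es,*}\sM$ (injectivity of the induced map uses that $\tilde\Lambda_e\hookrightarrow\Lambda$ factors through $V^{es,*}\sM\hookrightarrow\Lambda$), compatibly with $V^{s,*}$ because $\tilde\Lambda_{e+1}=V^{s,*}\tilde\Lambda_e$ as in the proof of Lemma~\ref{lem:associated_primes_relations}. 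Taking $e=0$ gives the square in the statement.

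\emph{The two consequences.} For~\eqref{itm:V_module_comes_from_Cartier_module:inverse}, apply $\_\otimes^{L}_{\sO_{\hat A}}k(y)$ to the comparison triangle; the third term stays nilpotent, and by flat base change for the Fourier--Mukai transform (as in the proof of Corollary~\ref{cor:coho_and_base-change}) together with Serre duality on $A$ one has $\sH^0\big(\Lambda_e\otimes^L k(y)\big)\cong H^0\big(A,\Omega_e\otimes\sP_{-y}\big)^\vee\cong H^0\big(A,\Omega_0\otimes\sP_{-p^{es}y}\big)^\vee$, the last isomorphism via the projection formula and $F^{es,*}\sP_{-y}=\sP_{-p^{es}y}$. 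Thus the direct systems $\big(H^0(A,\Omega_0\otimes\sP_{-p^{es}y})^\vee\big)_{e}$ and $\big(k(y)\otimes V^{es,*}\sM\big)_{e}$ have a common nilpotent cone, and the argument of Proposition~\ref{prop:inverse_systems_map} (as used for Lemma~\ref{lem:direct_systems_isomorphic}) produces the back-and-forth morphisms making them equivalent in that sense; dualizing the colimit and invoking Mittag--Leffler then yields the displayed formula. For~\eqref{itm:V_module_comes_from_Cartier_module:abelian_subvariety}, the colimit equality forces $\sM\hookrightarrow\tilde\Lambda_e$ for $e\gg0$ (the coherent subsheaf $\sM\subseteq\Lambda$ must land in some $\tilde\Lambda_e$), and with $\tilde\Lambda_0\hookrightarrow\sM$ this gives inclusions of associated-prime sets $\ass(\tilde\Lambda_0)\subseteq\ass(\sM)\subseteq\ass(\tilde\Lambda_e)$. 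By Lemma~\ref{lem:associated_primes_relations} the $\ass_e$ are the $[p^{es}]$-preimages of $\ass_0$, and by Corollary~\ref{cor:torsion_translate_of_ab_var} (using that $A$ has no supersingular factors) each of them is a torsion translate of one of the abelian subvarieties of Theorem~\ref{thm:GV}\eqref{itm:GV:supersingular_factor}, \eqref{itm:GV:depth}; the squeeze then identifies the underlying abelian subvarieties of the embedded points of $\sM$ with exactly those of Theorem~\ref{thm:GV}.

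\emph{Expected main obstacle.} The delicate point should be the bookkeeping in~\eqref{itm:V_module_comes_from_Cartier_module:inverse}: getting every twist, sign and shift right in the Fourier--Mukai/Serre-duality identification, and --- more essentially --- upgrading ``isomorphic colimits'' to ``equivalent in the sense of Lemma~\ref{lem:direct_systems_isomorphic}'' by extracting the explicit comparison morphisms from the nilpotence of the cone, in the style of Proposition~\ref{prop:inverse_systems_map}. A secondary subtlety, in~\eqref{itm:V_module_comes_from_Cartier_module:abelian_subvariety}, is separating minimal from embedded associated primes, since $\Supp\sM$ is only pinched between $\Supp\tilde\Lambda_0$ and its $[p^{es}]$-preimage.
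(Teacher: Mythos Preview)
Your argument for the main part (before ``In particular'') is essentially identical to the paper's: you form the same truncation triangle $\tau_{<0}\sK_e\to\sK_e\to\Omega_e$, apply $R\hat S\circ D_A$ to get $\Lambda_e\to V^{es,*}\sM\to\sC_e$ with $(\sC_e)$ nilpotent (via Theorem~\ref{thm:V_module} and Lemma~\ref{lem:nilpotent_derived_Cartier_module}), and then run the same diagram chase to show $\ker(\sH^0(\Lambda_e)\to\Lambda)=\ker(\sH^0(\Lambda_e)\to\sM_e)$, producing the injective $V$-module map $\tilde\Lambda_e\hookrightarrow\sM_e$ with nilpotent cokernel.

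Where you differ is in the two consequences. The paper dispatches \eqref{itm:V_module_comes_from_Cartier_module:inverse} in one line by citing \cite[Cor~3.2.1]{HP16}, and \eqref{itm:V_module_comes_from_Cartier_module:abelian_subvariety} by citing Theorem~\ref{thm:GV}\eqref{itm:GV:supersingular_factor},\eqref{itm:GV:depth}. You instead unpack both: for \eqref{itm:V_module_comes_from_Cartier_module:inverse} you tensor the comparison triangle by $k(y)$ and identify $\sH^0(\Lambda_e\otimes^L k(y))$ directly via derived base change and Serre duality (this is correct and is essentially what the cited HP16 corollary records); for \eqref{itm:V_module_comes_from_Cartier_module:abelian_subvariety} you use the squeeze $\tilde\Lambda_0\hookrightarrow\sM\hookrightarrow\tilde\Lambda_e$ (the second inclusion coming from coherence of $\sM$ inside $\Lambda=\varinjlim\tilde\Lambda_e$) together with Lemma~\ref{lem:associated_primes_relations} and Corollary~\ref{cor:torsion_translate_of_ab_var}. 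Both of your expanded arguments are sound and make explicit what the paper leaves to citations; the paper's route is shorter but yours is more self-contained.
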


\begin{proof}
The main task is to prove the part of the statement before ``In particular''. Indeed, assuming everything is proved before ``In particular'', point \eqref{itm:V_module_comes_from_Cartier_module:inverse} follows from \cite[Cor 3.2.1]{HP16}, and point \eqref{itm:V_module_comes_from_Cartier_module:abelian_subvariety} follows from points \eqref{itm:GV:supersingular_factor} and \eqref{itm:GV:depth} of Theorem \ref{thm:GV}. So, we are left to prove the part of the statement before ``In particular''.

Set $\sM_e:= V^{e s,*} \sM$. For each integer $e \geq 0$, we have an exact triangle:
\begin{equation*}
\xymatrix{
\sE_e := \tau_{<0} RS (D_{\hat A}( \sM_e))  \ar[r] & RS (D_{\hat A}( \sM_e)) \ar[r] & \Omega_e = \sH^0( RS (D_{\hat A}(\sM_e))) \ar[r]^(0.8){+1} & 
}
\end{equation*}
where $\sE_e:=F^{es}_* \sE_0$ because of the isomorphism $RS(D_{\hat A}(\sM_e)) \cong F^{es}_* (RS(D_{\hat A}(\sM_0)))$ stated in Theorem \ref{thm:V_module}. In particular, $\sE_e$ is nilpotent in all cohomological degrees by Theorem \ref{thm:V_module}, and hence according to Lemma \ref{lem:nilpotent_derived_Cartier_module}, $\big( \sE_e \big)_{e \geq 0}$ is nilpotent.

Applying $R\hat S (D_A(\_))$ we obtain the exact triangle
\begin{equation}
\label{eq:V_module_comes_from_Cartier_module:exact_triangle_final}
\xymatrix{
\Lambda_e:=R \hat S (D_A( \Omega_e)) \ar[r] & \sM_e \ar[r] & \sC_e:= R \hat S (D_A(  \sE_e)) \ar[r]^(0.8){+1} & 
}
\end{equation}
where  $\big( \sC_e \big)_{e \geq 0}$ is nilpotent. Additionally the triangles \eqref{eq:V_module_comes_from_Cartier_module:exact_triangle_final} form a direct system themselves too as we vary the integer $e \geq 0$. Hence, we have a homomorphism $\big(\sH^0(\Lambda_e) \big)_{e \geq 0} \to \big( \sM_e = \sH^0(\sM_e) \big)_{e \geq 0}$ of direct systems with nilpotent kernel and cokernel. \emph{We claim that this induces an injective homomorphism of direct systems $(\beta_e)_{e \geq 0} :\big(\tilde \Lambda_e\big)_{e \geq 0} \to \big( \sM_e\big)_{e \geq 0}$ with nilpotent cokernel.} Denote by 
 $\psi_{e,\infty} : \sH^0(\Lambda_e) \to \varinjlim_{e'} \sH^0(\Lambda_{e'})$ and $\alpha_e : \sH^0(\Lambda_e) \to \sM_e$ the induced homomorphisms. As the maps $\alpha_e$ induce a homomorphism of direct systems, and as the maps $\sM_e \to \sM_{e'}$ are injective, we obtain that $\ker \psi_{e,\infty} \subseteq \ker \alpha_e $ for every integer $e \geq 0$. However, as $(\ker \alpha_e)_{e \geq0}$ is nilpotent, we have in fact $\ker \psi_{e,\infty} = \ker \alpha_e $. It follows then that there is an induced injection  
 \begin{equation*}
 \beta_e : \tilde \Lambda_e = \factor{\sH^0(\Lambda_e)}{\ker \psi_{e,\infty}} = \factor{\sH^0(\Lambda_e)}{\ker \alpha_e} \to \sM_e
 \end{equation*}
  that forms the injective homomorphism of direct systems as in the claim. 
Additionally, by construction  it follows that $\im \beta_e = \im \alpha_e$. Hence, the cokernels of $\beta_e$ form a nilpotent direct system just as for $\alpha_e$. 
 
Having showed our claim, the direct limit statement of the corollary then follows from the exactness of taking direct limits \cite[\href{https://stacks.math.columbia.edu/tag/04B0}{Tag 04B0}]{stacks-project}.
\end{proof}
\begin{remark} Note that the induced $V$-module is also injective and hence we may iterate the procedure. This gives a decreasing sequence of injective $V$-modules $\tilde \Lambda _0\supset \tilde \Lambda _0^1\supset \tilde \Lambda _0^2\supset \ldots$ with $\varinjlim_e V^{es,*} \sM = \varinjlim_e \tilde \Lambda_e^i$ for any $i>0$. We do not know if this sequence stabilizes i.e. $\cap _{i\geq 0}\tilde \Lambda_0^i=\tilde \Lambda_0^j$ for any $j\gg 0$.
\end{remark}

\begin{proof}[Proof of Corollary \ref{cor:sample_Cartier_module}]
First note that if $A$ is supersingular then $L^{-1} \otimes V^* L \cong L^{-1} \otimes F^* L \cong L^{p-1}$. Hence, we may construct the Cartier module $\Omega_0$ that we use at once for both points.
Set $\sM=L$, and let $\phi : L \to V^* L$ be given by $s$. Then we set $\Omega_0 = \sH^0\Big(RS\big(D_{\hat A}(\sM)\big)\Big)$  as in \autoref{cor:V_module_comes_from_Cartier_module}. We have
 \begin{equation}
 \label{eq:sample_Cartier_module:proof}
 W^0_F
 \explshift{80pt}{=}{point \eqref{itm:V_module_comes_from_Cartier_module:inverse} of Corollary \ref{cor:V_module_comes_from_Cartier_module}}
  \big\{ \ y \in \hat A \ \big| \  k(y) \otimes \varinjlim_e V^{e,*} L \neq 0\ \}
\explshift{40pt}{=}{$V=[p]$ topologically}
 \hat A \setminus \big\{ \   y \in \hat A \  \big| \  p^e(y) \in  V(s) \textrm{ for infinitely many } e \ \big\} 
 \end{equation}
If $A$ is supersingular,  point \eqref{itm:sample_Cartier_module:supersingular} follows diretly from \eqref{eq:sample_Cartier_module:proof} using that in this case topologically $[p]= \Id$. 

For point \eqref{itm:sample_Cartier_module:no_supersingular_factor}, we only have to show the addendum about the case when $A$ is a surface. So, assume that $V(s)$ does not contain any torsion translate of an abelian subvariety. \emph{First we claim that $V(s)$ and $p^e(V(s))$ cannot have common components for all integers $e \gg 0$.} Indeed, otherwise $p$ permutes such components, and hence for every integer $j$ divisible enogh $p^j$ leaves such components fixed. However, then by \cite[Thm 2.3.6]{HP16} such components would be torsion translates of abelian subvarieties. This concludes our claim. Let $e_0$ be the threshold such that for $e \geq e_0$ the claim holds. Then the following computation concludes that when $A$ is a surface without supersingular fators, then $\hat A \setminus W^0_F$ is countable:, 
\begin{equation*}
\hat A \setminus W^0_F 
\expl{\subseteq}{ \eqref{eq:sample_Cartier_module:proof}} 
\bigcup_{j, l \in \bN,]  l \geq j  + e_0} \left[p^j\right]^{-1} V(s) \cap \left[p^l\right]^{-1} V(s)
=
\bigcup_{j, l \in \bN,]  l \geq j  + e_0} \left[p^j\right]^{-1} \Big(
\expl{\underbrace{ V(s) \cap \left[p^{l-j}\right]^{-1} V(s)}}{finite by the above claim}
 \Big).
\end{equation*}

\end{proof}

The following example shows, using $V$-modules, that the torsion property of Theorem \ref{thm:GV} is sharp. That is, it does not hold if $A$ is supersingular. This is the statement  of Proposition \ref{prop:supersingular_basic}.

}

\begin{example}
\label{ex:supersingular_basic}
We use the notation of Corollary \ref{cor:V_module_comes_from_Cartier_module}. Consider a supersingular abelian variety $A$. Then $V: A \to A$ is inseparable. Choose an arbitrary closed point $y \in \hat A$, and set $\sM:= k(y)$. Then, 
\begin{equation*}
V^* \sM=\Spec \left( \factor{\sO_{\hat A}}{m_{\hat A, y}^{[p^g]}} \right).
\end{equation*}
Hence, any element of $x \in \sO_{\hat A, y} \setminus m_{\hat A, y}^{[p^g]}$ such that $m_{\hat A, y} \cdot x \subseteq m_{\hat A,y}^{[p^g]}$ gives a $V$-module structure on $\sM$. Fix any non-zero such element. As for $e=0,1$,  $R S\big(D_{\hat A} (V^{e,*} \sM) \big)$ are unipotent vector bundles, so we have 
\begin{equation*}
\Omega_e = RS\big(D_A( V^{e,*} \sM )\big) \cong \sH^0\Big( RS\big(D_A( V^{e,*} \sM )\big) \Big)\cong  \tilde \Omega_e.
\end{equation*}
It is easy to check that 
\begin{equation*}
\{\  z \in \hat A \ | \ H^g ( A, \sP_z \otimes \Omega_0)\ne 0\ \} = \{-y\}.
\end{equation*}
Additionally by point \eqref{itm:GV:limit} of Theorem \ref{thm:GV} we obtain that
 \begin{equation*}
0 \neq \varprojlim_e H^g ( A, \sP_{-y} \otimes \Omega_e).
 \end{equation*}
\end{example}

\section{Maximal dimensional components of Frobenius stable cohomology support loci}
\label{sec:max_dimensional_support_loci}

 In this section we prove Theorem \ref{thm:non_vanishing}.

\begin{notation}
\label{notation:associated_prime}
We use Notation \ref{notation:basic}, for a Cartier module $F^s_* \Omega_0 \to \Omega_0$ on an abelian variety $A$ without supersingular factors.  Let $\eta \in \hat A$ be an associated prime of $\tilde \Lambda_0$ of codimension $j$. According to Corollary \ref{cor:torsion_translate_of_ab_var}, there exist integers $k_0$ and $k_1$ such that $\overline{\eta} = \hat W + y$, where $y \in \hat A$ is a closed $p^{k_0s}(p^{k_1s}-1)$-torsion point, and $\hat W \subseteq \hat A$ is an abelian subvariety. 
By Lemma \ref{lem:associated_primes_relations}, $\eta':=p^{k_0s}\eta$ is also  an associated prime of $\tilde \Lambda_0$ of codimension $j$ and its closure is $\hat W + y'$  where $y'=p^{k_0s}y$ is a $(p^{k_1s}-1)$-torsion point. Replacing $\eta $ by $\eta'$, we may assume that $\overline{\eta} = \hat W + y$, where $y \in \hat A$ is a closed $(p^{k_1s}-1)$-torsion point.
Using the equation
\begin{equation*}
p^{k_1s} \overline{\eta} = p^{k_1s} \big( \hat W + y \big) = \hat W + p^{k_1s} y 
= \hat W + y = \overline{\eta} \Longrightarrow p^{k_1s} \eta = \eta, 
\end{equation*}
in conjunction with Lemma \ref{lem:associated_primes_relations},  we  define:
\begin{equation*}
\Omega_e':= 
\sP_{y} \otimes \Omega_{k_1e} =  
\sP_{y} \otimes F^{e  k_1s}_* \Omega_0 
\cong F^{e k_1s }_* \left( \sP_{p^{e k_1s } y } \otimes \Omega_0  \right) 
\cong F^{e  k_1s }_*\big( \sP_{y} \otimes \Omega_0 \big)  
= F^{e  k_1s}_* \Omega_0'.
\end{equation*}
That is $F^{k_1s}_* \Omega_0' \to \Omega_0'$ is naturally a Cartier module, such that $F^{e  k_1s }_* \Omega_0' \cong \Omega_e'$. Similarly to Notation \ref{notation:basic} let us introduce also 
\begin{itemize}
\item  $\Lambda_e':= R \hat S\big(  D_A( \Omega_e')\big)$, and
\item $\Lambda':=\hocolim_e \Lambda_e'$.
\item $\tilde \Lambda _e'={\rm Im}\big(\mathcal H ^0(\Lambda _e')\to \Lambda\big)$,
\item $\tilde \Omega _e':=(-1_A)^*D_ARS\big(\tilde \Lambda _e'\big)[-g]=RS\Big(D_{\hat A}\big(\tilde \Lambda _e'\big)\Big)$,
\end{itemize}
In particular, then we have:
\begin{equation*}
\tilde \Lambda_e' \cong T_{-y}^* \tilde  \Lambda_{ek_1}. 
\end{equation*}
Hence, the generic point $\eta'$ of $\hat W$ is an associated prime of $\tilde \Lambda_e'$ for all integers $e \geq 0$. 

Let $W$ be the dual abelian variety of $\hat W$, and let $\pi :A\to W$ be the dual morphism of $\hat W \hookrightarrow \hat A$. Note that $\pi$ is of relative dimension $j$. 
\end{notation}

\begin{assumption}
\label{assumption:zero}
In the situation of Notation \ref{notation:associated_prime}, assume that $R^j \pi_*  \Omega_0' =0$.
\end{assumption}

\begin{claim} 
\label{claim:zero}
Under Assumption \ref{assumption:zero}, we have that 
\begin{equation*} \varinjlim_e \mathcal H^l \left(  RS_{W,\hat W}D_W (R\pi _* \Omega _e') \right)=0\qquad {\rm for}\ l\not\in[-j+1,\ldots,0],
\end{equation*}
or equivalently the direct system $\big( RS_{W,\hat W}D_W (R\pi _* \Omega _e') \big)_{e \geq 0}$ is nilpotent outside of cohomological degrees $\{-j+1,\dots, 0\}$.
\end{claim}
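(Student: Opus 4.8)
The plan is to follow the strategy of the proof of Theorem~\ref{thm:V_module}: split $R\pi_*\Omega_e'$ into its cohomology sheaves via the truncation triangles of Lemma~\ref{l-tt}, apply \cite[Thm 3.1.1]{HP16} (equivalently \cite[Cor 3.1.4]{HP16}) on $W$ to each of these cohomology sheaves separately, and reassemble the conclusion with Lemma~\ref{lem:nilpotent_not_nilpotent}. The starting observation is a Frobenius compatibility: writing $F_A$, $F_W$ for the absolute Frobenius morphisms, functoriality of Frobenius gives $\pi \circ F_A^n = F_W^n \circ \pi$, and since $F$ is affine, $R\pi_*\Omega_e' = R\pi_*\big( F^{e k_1 s}_{A,*}\Omega_0' \big) \cong F^{e k_1 s}_{W,*}\, R\pi_*\Omega_0'$, hence $R^i\pi_*\Omega_e' \cong F^{e k_1 s}_{W,*} R^i\pi_*\Omega_0'$ for all $i \geq 0$. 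In particular, Assumption~\ref{assumption:zero} forces $R^j\pi_*\Omega_e' = 0$ for every $e$, so $R\pi_*\Omega_e'$ is concentrated in cohomological degrees $\{0, \dots, j-1\}$; this is precisely where the range in the claim becomes $\{-j+1, \dots, 0\}$ rather than $\{-j, \dots, 0\}$. Finally, applying $R^i\pi_*$ to the structure map $\Omega_1' \to \Omega_0'$ equips each coherent sheaf $R^i\pi_*\Omega_0'$ (for $0 \leq i \leq j-1$) with a Cartier module structure $F^{k_1 s}_{W,*} R^i\pi_*\Omega_0' = R^i\pi_*\Omega_1' \to R^i\pi_*\Omega_0'$ on $W$, for which $\big( R^i\pi_*\Omega_e' \big)_{e \geq 0}$ is the associated inverse system.

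Next I would feed the truncation triangles through the functors in play. For each $i$, the triangle $\tau_{\leq i-1} R\pi_*\Omega_e' \to \tau_{\leq i} R\pi_*\Omega_e' \to R^i\pi_*\Omega_e'[-i] \xrightarrow{+1}$ of Lemma~\ref{l-tt} is functorial in $e$, so applying the contravariant functor $D_W$ followed by the covariant triangulated functor $RS_{W,\hat W}$ yields, compatibly with the direct system structure in $e$, exact triangles with first term $RS_{W,\hat W}D_W(R^i\pi_*\Omega_e')[i]$, middle term $RS_{W,\hat W}D_W(\tau_{\leq i} R\pi_*\Omega_e')$, and third term $RS_{W,\hat W}D_W(\tau_{\leq i-1} R\pi_*\Omega_e')$. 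By \cite[Thm 3.1.1]{HP16} applied to the Cartier module $R^i\pi_*\Omega_0'$ on $W$ (with $RS_{W,\hat W}$ playing the role of $R\hat S$ there), the direct system $\big( RS_{W,\hat W}D_W(R^i\pi_*\Omega_e') \big)_{e \geq 0}$ is nilpotent outside cohomological degree $0$, hence $\big( RS_{W,\hat W}D_W(R^i\pi_*\Omega_e')[i] \big)_{e \geq 0}$ is nilpotent outside cohomological degree $-i$. As $i$ ranges over $\{0, \dots, j-1\}$, all the shifts $-i$ lie in $\{-j+1, \dots, 0\}$.

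To conclude I would induct on $i$, from $i=-1$ (where $\tau_{\leq -1}R\pi_*\Omega_e' = 0$, so the system is trivially nilpotent in every degree) up to $i=j-1$ (where $\tau_{\leq j-1}R\pi_*\Omega_e' = R\pi_*\Omega_e'$). At each step, for every integer $l \notin \{-j+1, \dots, 0\}$ both the first and the third term of the triangle just described are nilpotent in cohomological degree $l$, so point~\eqref{itm:nilpotent_not_nilpotent:nilpotent} of Lemma~\ref{lem:nilpotent_not_nilpotent} shows the middle term is too. For $i = j-1$ this says precisely that $\big( RS_{W,\hat W}D_W(R\pi_*\Omega_e') \big)_{e \geq 0}$ is nilpotent outside $\{-j+1, \dots, 0\}$, which is the claim.

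I expect the only real difficulty to be organizational rather than conceptual: one must verify that all the objects in sight genuinely assemble into direct systems of complexes in $D^b_c(\hat W)$, and that the truncation, Grothendieck duality and Fourier--Mukai operations produce morphisms of exact triangles compatible with the index $e$, so that Lemma~\ref{lem:nilpotent_not_nilpotent} can be applied termwise; one also relies on the affineness of $F$ to commute $R\pi_*$ with Frobenius pushforward. None of this goes beyond the arguments already carried out in the proof of Theorem~\ref{thm:V_module}.
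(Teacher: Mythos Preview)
Your proposal is correct and follows essentially the same approach as the paper's own proof: both use the Frobenius compatibility $R^i\pi_*\Omega_e' \cong F^{e k_1 s}_* R^i\pi_*\Omega_0'$ to kill the top degree via Assumption~\ref{assumption:zero}, then induct along the truncation triangles of Lemma~\ref{l-tt} after applying $RS_{W,\hat W}D_W$, feeding in \cite[Thm 3.1.1]{HP16} on each $R^i\pi_*\Omega_0'$. The only cosmetic differences are that the paper carries a sharper induction hypothesis (nilpotent outside $[-m,0]$ at step $m$) and phrases the nilpotence conclusion as vanishing of $\varinjlim$ directly rather than citing Lemma~\ref{lem:nilpotent_not_nilpotent}, but these are equivalent formulations.
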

\begin{proof} 
First, note that
\begin{equation}
\label{eq:zero:Omega_e}
R^j\pi _* \Omega _e'
\explshift{-40pt}{=}{Notation \ref{notation:associated_prime}}
 R^j\pi _*\left(F^{e k_1s}_*\Omega _0'\right) 
\expl{\cong}{ $F_W\circ \pi =\pi \circ F_X$ }
 F^{e k_1s}_*\left(R^j\pi _* \Omega _0'\right)
\explshift{40pt}{=}{Assumption \ref{assumption:zero}}
0.
\end{equation}
Then, \emph{we claim that:}
\[ 
\varinjlim_e \mathcal H^i \left(   RS_{W,\hat W}D_W\left(\tau ^{\leq m}R\pi _* \Omega _e'\right)\right)=0\qquad {\rm for}\ i\not\in[-m,\ldots,0].\] 
Since $\tau ^{\leq 0}R\pi _* \Omega _e' =R^0\pi _*\Omega _e' $, the base of the induction holds by \cite[Thm 3.1.1]{HP16}. Consider now the triangle
\begin{equation}\label{e-t}
\xymatrix{
\tau ^{\leq m}R\pi _* \Omega _e'  
\ar[r] & 
\tau ^{\leq m+1}R\pi _* \Omega _e'  
\ar[r] & 
 R^{m+1}\pi _*\Omega _e' [-m-1] \ar[r]^(0.9){+1} & .
 }
\end{equation}
By induction we obtain the following two vanishings:
\[
\forall i\not\in[-m,\ldots,0]  \   :  \    \varinjlim_e  \mathcal H^i \left( RS_{W,\hat W}D_W  \left(\tau ^{\leq m}R\pi _*\Omega _e' \right) \right)=0\] 
\begin{multline*}
\forall i \neq - m-1 \ : \  \varinjlim_e \mathcal H^i \left(  RS_{W,\hat W}
D_W  \left(R^{m+1}\pi _*  \Omega _e'[-m-1] \right)\right)
\\ \cong 
 \varinjlim_e \mathcal H^i \left(  RS_{W,\hat W}
D_W  \left(R^{m+1}\pi _*  \Omega _e' \right)\right)[m+1]
=0.
\end{multline*}
Applying $\mathcal H ^\bullet$ and $\varinjlim$ to the triangle in \eqref{e-t} concludes our claim.

To conclude the proof, we just observe that by \eqref{eq:zero:Omega_e} we have that  $R \pi_* \Omega_e' \cong R \tau^{\leq j-1} \pi_* \Omega_e'$.

\end{proof}

\begin{proof}[Proof of Theorem \ref{thm:non_vanishing}]
We may use Notation \ref{notation:associated_prime} throughout the proof. We have to arrive to contradiction under Assumption \ref{assumption:zero}. 

We start by introducing some additional notation. Consider the morphisms given in the following diagram:
\begin{equation*}
\xymatrix@C=110pt@R=16pt{
& \ar[dl]_{\pr_\eta} A \times \eta \ar[r]^{\tau_\eta} \ar[d]^{j_{\hat W}} & \eta  \ar[d]^{\iota_{\hat W}}  \\
%
%
A \ar[d]_{\pi} & \ar[l]^{\pr_{\hat W}} A \times \hat W \ar[d]^{\rho= \pi \times \id_{\hat W}} \ar[r]_{\tau_{\hat W}} & \hat W  \\
%
%
W & \ar[l]^{\pr_{W \times \hat W}} W \times \hat W \ar[ur]_{\tau_{W \times \hat W}}
} 
\end{equation*}
Set $\mathcal P^{A\times \hat W}:=\mathcal P|_{A\times \hat W}$ and $\sP^{A \times \eta}:= \sP|_{A \times \eta}$, and  notice that
 \begin{equation}
 \label{eq:non_vanishing:Poincare}
 \left.
 \begin{array}{l}
\forall \hat w\in \hat W \subset \hat A \ : \  \mathcal P^{A\times \hat W} |_{A\times \hat w} \cong \rho ^* \mathcal P ^{W\times \hat W} |_{A\times \hat w} \textrm{, and}
 \\[6pt]
 \mathcal P^{A\times \hat W}|_{0_A\times \hat W}=\mathcal O _{\hat W}=\rho^*\mathcal P ^{W\times \hat W}|_{0_A\times \hat W}
 \end{array}
 \right\} 
 \Longrightarrow
 \mathcal P^{A\times \hat W}=\rho^*\mathcal P ^{W\times \hat W}.
 \end{equation}
%
By Claim \ref{claim:zero}, we know that the direct system in $D\big(\hat A \big)$ formed out of the following sheaves is nilpotent in cohomological degree $-j$:
\begin{multline}
\label{eq:non_vanishing:long}
(-1_{\hat W } )^*RS_{W,\hat W}D_{ W}(R\pi _* \Omega _e') 
\expl{\cong}{Identity for exchanging $RS$ and $D$ for $W$, see Theorem \ref{thm:Mukai}}
 D_{\hat W} RS_{W,\hat W}(R\pi _* \Omega _e')[-g+j] 
\\
\expl{\cong}{definition of $RS_{W \hat W}$}
D_{\hat W} R \tau_{W \times \hat W,*} \left( \sP^{W \times \hat W}  \otimes \pr_{W \times \hat W}^* R \pi_* \Omega_e' \right)[-g+j]
\\ \explshift{60pt}{\cong}{flat base-change } 
D_{\hat W} R \tau_{W \times \hat W,*} \left( \sP^{W \times \hat W}  \otimes  R \rho_* \pr_{\hat W}^* \Omega_e' \right)[-g+j]
\explshift{50pt}{\cong}{projection formula, equation \eqref{eq:non_vanishing:Poincare}, and the equation $R \tau_{W \times \hat W,*}  \circ R \rho_* = R \tau_{\hat W,*}$}
D_{\hat W} R \tau_{ \hat W,*} \left( \sP^{A \times \hat W}  \otimes  \pr_{\hat W}^* \Omega_e' \right)[-g+j]
\\ \explshift{40pt}{\cong}{Grothendieck duality}
 R \tau_{ \hat W,*} D_{A \times \hat W} \left( \sP^{A \times \hat W}  \otimes  \pr_{\hat W}^* \Omega_e' \right)[-g+j]
\expl{\cong}{$\omega^{\bullet}_{A \times \hat W} [-(g-j)] \cong \omega^{\bullet}_{A \times \hat W / \hat W}$}
 R\tau_{\hat W,*}  D_{A \times \hat W/\hat W} \left( \sP^{A \times \hat W} \otimes \pr_{\hat W}^* \Omega_e' \right).
\end{multline}
Then, as $\eta \to \hat W$ is flat, the direct system in $D(\hat A)$ formed out of the following sheaves is also nilpotent in cohomological degree $-j$:
\begin{multline*}
\left((-1_{\hat W } )^* RS_{W,\hat W}D_{ W}(R\pi _*  \Omega _e') \right) \otimes k(\eta)
\expl{\cong}{\eqref{eq:non_vanishing:long}} 
\left(R\tau_{\hat W,*}  D_{A \times \hat W/\hat W} \left( \sP^{A \times \hat W} \otimes \pr_{\hat W}^* \Omega_e' \right)
 \right)\otimes k(\eta)
\\
\expl{\cong}{flat base-change}
R\tau_{\eta,*}  j_{\hat W}^* D_{A \times \hat W/\hat W} \left( \sP^{A \times \hat W} \otimes \pr_{\hat W}^* \Omega_e' \right)
\expl{\cong}{localizaiton and duality commutes}
R\tau_{\eta,*}   D_{A \times \eta} \left( \sP^{A \times \eta} \otimes \pr_{\eta}^* \Omega_e' \right)
\\
\expl{\cong}{Grothendieck duality}
D_{k(\eta)}R\Gamma \left(  A \times \eta,  \sP^{A \times \eta} \otimes \pr_{ \eta}^* \Omega_e'	 \right).
\end{multline*}
So, we obtain that 
\begin{equation*}
0=  D_{k(\eta)} \left(  \varinjlim_e  \sH^{-j} \bigg( D_{k(\eta)}R\Gamma \left(  A \times \eta,  \sP^{A \times \eta} \otimes \pr_{ \eta}^* \Omega_e'	 \right) \bigg) \right) 
= \varprojlim_e R^j\Gamma \left(  A \times \eta,  \sP^{A \times \eta} \otimes \pr_{ \eta}^* \Omega_e'	 \right)  
\end{equation*}
However,  by  point \eqref{itm:main:limit} of Lemma \ref{lem:main}, Corollary \ref{cor:coho_and_base-change} we have
\begin{equation*}
0 \neq  \varprojlim_e R^j\Gamma \left(  A \times \eta,  \sP^{A \times \eta} \otimes \pr_{ \eta}^*  \tilde \Omega_e'	 \right) .
\end{equation*}
This contradicts the relation given by  Lemma \ref{lem:direct_systems_isomorphic} between the inverse systems $\left( \tilde \Omega_e' \right)_{e \geq 0}$ and $\left(  \Omega_e' \right)_{e \geq 0}$.
\end{proof}


\section{Proof of Theorem \ref{t-main}}
\label{sec:proof_of_main_thm}

\begin{proof}[Proof of Theorem \ref{t-main}]
Let $X\subset  A$ be a theta divisor, and set $\tau:= \tau(X)$ to be the test ideal of $X$. Since $X$ is a theta divisor, it is reduced.  In particular, this also means that we may assume that $\dim A \geq 2$. 

  Consider then the following diagram of Cartier modules on $A$:
\begin{equation}
\label{eq:main:Cartier_modules}
\xymatrix{
F_*\omega _A \ar@{^(->}[r]^{F_*(a)} \ar[d] & 
F_* \omega_A(X) \ar@{->>}[r]^{F_*(b)} \ar[d] & 
F_* \omega_X \ar[d]  & 
\ar[d] F_* (\omega_X \otimes \tau) \ar@{_(->}[l]^{F_*(\iota)} \\
\omega_A \ar@{^(->}[r]^a & 
\omega_A(X) \ar@{->>}[r]^b & 
\omega_X & 
\omega_X \otimes \tau \ar@{_(->}[l]
}
\end{equation}
Diagram \eqref{eq:main:Cartier_modules}, yields a natural Cartier module structure on $\omega_A(X) \otimes \tau':= b^{-1}(\omega_X \otimes \tau)$, together with the exact sequence
\begin{equation}
\label{eq:main:tau_tau_prime}
0\to \omega _A\stackrel a \to \omega _A(X)\otimes \tau '  \stackrel b \to \omega _X\otimes \tau\to 0.
\end{equation}
%
%
Notice that $X$ is strongly F-regular if and only if $\tau =\OO _X$ or equivalently if $\tau ' = \OO _A$.  Let $\Omega _0:=\omega _X\otimes \tau$. Our goal is to show that $\Omega_0 = \omega_X$.  Proceeding by contradiction we assume that $\Omega_0 \ne \omega_X$. Note that as $X$ is reduced, the cosupport of $\tau$ is a proper closed subset of $X$. 
%
%

From now, we use the additional notations of Notation \ref{notation:basic} given by $\Omega_0$ and $A$. We show several statements next about 
\begin{equation}
\label{eq:main:tilde_Lambda_0}
\Supp \tilde \Lambda_0 
\expl{\subseteq}{\cite[Cor 3.2.1]{HP16}, and the fact that $\Supp \Lambda_0 \supseteq \Supp \tilde \Lambda_0$}
\Big\{\  Q \in \hat A \ \Big| \ H^0\big(A, \Omega_0 \otimes Q^{-1} \big) \neq 0 \ \Big\}.
\end{equation}

\underline{$\Supp \tilde \Lambda_0 \neq \emptyset$:}  as $\tilde \Lambda_e = V^{e,*} \tilde \Lambda_0$, this is equivalent to showing that $\Lambda \neq 0$. This follows from  \cite[Theorem 3.1.1]{HP16} since
 $\Omega =\varprojlim \Omega _e \ne 0$.

\underline{ $\Supp \tilde \Lambda_0 \neq A$:} note that
 $H^0(A, \OO _A(X)\otimes \tau '\otimes Q)=0$ for general $Q\in \hat A$, otherwise the support of every translate of $X$ would contain $Z(\tau ')$, which would imply  that  $Z(\tau ')=\emptyset $. As, $H^1(A, Q)=0$ for $Q \in \hat A \setminus \{\sO_A\}$, using \eqref{eq:main:tau_tau_prime}, we obtain that $H^0(A, \omega_X  \otimes \tau \otimes Q)=0$ for $Q \in \hat A$ general. Then, \eqref{eq:main:tilde_Lambda_0} concludes that $\Supp \tilde \Lambda_0 \neq \hat A$.
 
\underline{ $\dim \Supp \tilde \Lambda_0 >0$:} if not, then $ \tilde \Lambda_e = V^{e,*} \tilde \Lambda_0$ would be Artininan, and \cite[Corollary 3.2.3]{HP16} would imply that $\Supp \Omega = A$, which is not true in the present situation. 

This concludes our statements about $\Supp \tilde \Lambda_0$. As a consequence we  may assume that $\Supp \tilde \Lambda_0$ has an irreducible component $U$,  such that $0< \dim U< g$. According to Corollary \ref{cor:torsion_translate_of_ab_var},   $U =P+\hat W$ where $P$ is a $p^{k_0}(p^{k_1}-1)$-torsion element of $\hat A$ and $\hat W\subset \hat A$ is an abelian subvariety of codimension $0<j <g$. 

Consider the projection $\pi:A\to W$ dual to the inclusion $\hat W\to \hat A$. For any $a\in A$ we let $T_a:A\to A$ be the translation by $a$ and $\lambda (a)=\OO _A(X)^\vee \otimes \OO _{A}(T _a^*X)$ which gives a morphism $\lambda :A\to \hat A$. Since $(A,X)$ is a PPAV, then $\lambda$ is an isomorphism.
Let $\tilde W=\lambda^{-1} \big(\hat W+P\big)\subset A$ be the induced abelian subvariety of $A$ and $\tilde \pi =\pi |_{\tilde W}:\tilde W \to W$ be the induced isogeny.

By Theorem \ref{thm:non_vanishing} we know that $R^j \pi_*( Q \otimes \Omega_0) \neq 0$ for some $Q \in \hat A$. Hence, it is enough to show the following:

\begin{claim} \label{c-v}For every $Q \in \hat A$, we have $R^j\pi _* (Q \otimes \Omega _0)=0$.\end{claim}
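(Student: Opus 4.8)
The plan is to reduce the vanishing of $R^j\pi_*(Q\otimes\Omega_0)$ to a statement about the canonical bundle of the fibers of $\pi$ restricted to $X$, and then use that $X$ is a theta divisor — in particular that its intersections with the fibers of $\pi$ are again (ample) divisors. First I would observe that $\pi\colon A\to W$ has fibers that are translates of a fixed abelian subvariety $B:=\ker(\pi)^\circ\subseteq A$ of dimension $j$, and that $\widetilde W=\lambda^{-1}(\hat W+P)$ is a complementary abelian subvariety mapping isogenously to $W$ via $\tilde\pi$. Because $(A,X)$ is principally polarized, the restriction of $\sO_A(X)$ to each fiber $A_w$ of $\pi$ is a principal polarization on that abelian variety $A_w\cong B$; in particular $X\cap A_w$ is a theta divisor on $A_w$, hence an irreducible (or at worst a sum of) ample divisor, and $h^0(A_w,\sO_{A_w}(X\cap A_w))=1$. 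I would then use cohomology and base change: since $R^j\pi_*$ is the top relative cohomology and the fibers have dimension $j$, it suffices to show $H^j(A_w,(Q\otimes\Omega_0)|_{A_w})=0$ for every closed point $w\in W$ (after checking the base-change map is an isomorphism there, which is automatic in top degree).

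Next I would identify $(Q\otimes\Omega_0)|_{A_w}$. Since $\Omega_0=\omega_X\otimes\tau$ and $\omega_X\cong\sO_X(X)|_X$ (adjunction, using $\omega_A\cong\sO_A$), restricting to $A_w$ gives a sheaf supported on $X\cap A_w$ which, away from the cosupport of $\tau$, is $\omega_{X\cap A_w}\otimes(\text{something})$; the key point is that $\tau|_{A_w}$ is contained in the test ideal $\tau(X\cap A_w)$ of the restricted divisor for general $w$ — or at least that $\Omega_0|_{A_w}$ is a quotient of $\omega_{A_w}(X\cap A_w)\otimes\tau'|_{A_w}$ fitting in the restriction of the sequence~\eqref{eq:main:tau_tau_prime}. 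Then $H^j(A_w,Q|_{A_w}\otimes\Omega_0|_{A_w})$ is computed from the long exact sequence associated to
\begin{equation*}
0\to \omega_{A_w}\otimes Q|_{A_w}\to \big(\omega_{A_w}(X\cap A_w)\otimes\tau'\big)|_{A_w}\otimes Q|_{A_w}\to \Omega_0|_{A_w}\otimes Q|_{A_w}\to 0.
\end{equation*}
Here $H^j(A_w,\omega_{A_w}\otimes Q|_{A_w})=H^j(A_w,Q|_{A_w})$, which is nonzero only when $Q|_{A_w}\cong\sO_{A_w}$, and the middle term has $H^j$ dual to $H^0(A_w,\tau'(-X\cap A_w)\otimes Q^\vee|_{A_w})\subseteq H^0(A_w, \sO_{A_w}(-X\cap A_w)\otimes Q^\vee|_{A_w})=0$ since $X\cap A_w$ is an ample effective divisor and $Q|_{A_w}$ is numerically trivial (so the twist is still negative, forcing no sections). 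Chasing the sequence then forces $H^j(A_w,\Omega_0|_{A_w}\otimes Q|_{A_w})=0$ for every $w$, and hence $R^j\pi_*(Q\otimes\Omega_0)=0$.

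The main obstacle I anticipate is the bookkeeping around the test ideal under restriction to the fibers $A_w$: a priori $\tau(X)|_{A_w}$ need not equal $\tau(X\cap A_w)$, and $\Omega_0|_{A_w}$ could have extra torsion or embedded behavior along the cosupport of $\tau$. The clean way around this is to not restrict $\Omega_0$ at all but rather to work with the whole exact sequence~\eqref{eq:main:tau_tau_prime} of sheaves on $A$, apply $R\pi_*$, and reduce to showing $R^j\pi_*(\omega_A\otimes Q)=0$ and $R^j\pi_*\big((\omega_A(X)\otimes\tau')\otimes Q\big)=0$ for all $Q$; the first is the statement that $H^j$ of a numerically trivial line bundle on the $j$-dimensional fibers is zero unless the bundle is trivial on the fiber (and when it is trivial, trace through that $Q$ then pulls back from $W$ and the claim reduces to the generic case handled by $\widetilde W$), and the second follows from relative Serre/Grothendieck duality on $\pi$ together with $\tau'\hookrightarrow\sO_A$ and the ampleness of $X$ on the fibers, exactly as $\Supp\tilde\Lambda_0\neq\hat A$ was argued above. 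I would organize the final write-up around this second, torsion-free route, invoking relative duality $R^j\pi_*(\sF\otimes Q)^\vee\cong \pi_*\,R\mathcal Hom(\sF\otimes Q,\omega_{A/W}^\bullet[{-j}])$ and the vanishing $H^0(A_w,\sO_{A_w}(-X\cap A_w)\otimes P)=0$ for $P$ numerically trivial and $X\cap A_w$ ample effective.
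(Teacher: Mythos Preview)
Your proposal has a genuine gap at the step where you claim $R^j\pi_*\big((\omega_A(X)\otimes\tau')\otimes Q\big)=0$ for every $Q$. You assert this ``follows from relative Serre/Grothendieck duality on $\pi$ together with $\tau'\hookrightarrow\sO_A$ and the ampleness of $X$ on the fibers'', but that deduction breaks down exactly when the cosupport $Z=V(\tau')$ meets some fiber $A_{w_0}$ in codimension $\leq 1$. In that situation $(\sO_A(X)\otimes\tau')|_{A_{w_0}}$ is no longer close to a line bundle: Serre duality on $A_{w_0}$ identifies $H^j$ with $\Hom_{A_{w_0}}\big((\sO_A(X)\otimes\tau')|_{A_{w_0}}\otimes Q|_{A_{w_0}},\sO_{A_{w_0}}\big)$, and the inclusion $\tau'\hookrightarrow\sO_A$ goes the \emph{wrong way} after applying $\Hom(-,\sO)$, so you cannot bound this by $H^0(A_{w_0},\sO_{A_{w_0}}(-X|_{A_{w_0}})\otimes Q^{-1})$. (Your displayed containment ``$H^0(A_w,\tau'(-X\cap A_w)\otimes Q^\vee)\subseteq H^0(A_w,\sO_{A_w}(-X\cap A_w)\otimes Q^\vee)$'' is in fact the reverse of what duality gives.) The reflexive-hull trick $\sH om(\tau',\sO_A)\cong\sO_A$ only helps globally on $A$ because $Z$ has codimension $\geq 2$ there; it does not survive restriction to a fiber on which $Z$ becomes divisorial.

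Crucially, your argument never uses two hypotheses that are indispensable: the irreducibility of $X$, and the fact that $\hat W+P$ lies in $\Supp\tilde\Lambda_0$ (you treat $\pi$ as if it were an arbitrary quotient map). The paper's proof proceeds differently: it first shows $R^j\pi_*(\omega_X\otimes Q)=0$ via the sequence $0\to Q\to Q(X)\to\omega_X\otimes Q\to 0$ (here $Q(X)$ is an honest ample line bundle, so relative Kodaira vanishing applies with no ideal-sheaf complications), and then from $0\to\omega_X\otimes\tau\otimes Q\to\omega_X\otimes Q\to\sC\to 0$ deduces that nonvanishing of $R^j\pi_*(\Omega_0\otimes Q)$ forces $R^{j-1}\pi_*\sC\neq 0$, hence some fiber $Z_{w_0}$ has dimension $\geq j-1$. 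At that point the geometry enters: since $\hat W+P\subseteq\Supp\tilde\Lambda_0$, every translate $T_w^*Z$ with $w\in\tilde W$ lies in $X$, so $\tilde W+Z_{w_0}$ is a divisor contained in $X$; by irreducibility $X=\tilde W+Z_{w_0}$, which is visibly not ample, a contradiction. This geometric step is precisely what rules out the divisorial-fiber case that your duality argument cannot handle.
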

\begin{proof}Assume the contrary, that is,  for some fixed $Q \in \hat A$ we have
\begin{equation}
\label{eq:v:assumption}
R^j \pi_* ( Q \otimes \omega_X \otimes \tau) = R^j \pi_*(Q \otimes \Omega_0) \neq 0.
\end{equation}
 Consider the short exact sequence 
$$0\to Q\to Q\otimes \OO _A(X)\to \omega _X\otimes Q\to 0.$$
Pushing forward to $W$, we have $R^{j+1}\pi _* Q=0$, since $\dim (A/W)=j$; as well as  $R^i\pi _*Q(X)=0$ for $i>0$, since $Q(X)$ is ample and so $H^i(A_y,Q(X)|_{A_y})=0$ for any $y\in W$. Thus $R^j\pi _*(\omega _X\otimes Q)=0$.
Now consider the short exact sequence 
\begin{equation}
\label{eq:v:c}
0\to \omega _X\otimes \tau \otimes Q\to \omega _X\otimes Q\to \sC\to 0.
\end{equation}
Pushing forward to $W$, we have  a surjection
\begin{equation}
\label{eq:v:surjection}
R^{j-1}\pi _* 
\expl{\sC}{$\Supp \sC = V(\tau)=:Z$ by \eqref{eq:v:c}}
 \twoheadrightarrow   R^j\pi _*(\omega _X\otimes \tau \otimes Q)
 \expl{\neq}{\eqref{eq:v:assumption}} 
 0.
\end{equation}
 It follows that $Z_{w_0}=Z(\tau)|_{A_{w_0}}$ has codimension $\leq 1$  for some fiber $A_{w_0}$ of $\pi$.

Note that for $R \in (P + \hat W) \setminus \{ \sO_A\}$, we have
\begin{equation*}
H^0(A, \sO_A(X) \otimes \tau' \otimes R^{-1})
\expl{ \twoheadrightarrow}{\eqref{eq:main:tau_tau_prime} and $H^1(A, R) =0$ for $R \in \hat A \setminus \{ \sO_A\}$}
H^0(A, \omega_X \otimes \tau \otimes  R^{-1}) 
\expl{\neq}{\eqref{eq:main:tilde_Lambda_0}}
 0
\end{equation*}
Then, by semicontinuity, using that  $\dim \hat W>0$, it follows that:
\begin{equation*}
\forall R \in P + \hat W : \  H^0\big(A, \sO_A(X) \otimes \tau' \otimes R^{-1}\big) \neq 0 
 \quad \Longrightarrow \quad  \forall  w \in \tilde W: \ H^0\big(A,\OO _A\big(T _{-  w}^* X\big)\otimes \tau ' \big)\ne 0
\end{equation*} 
This means that $T _{w}^* Z$ is contained in the support of $X$ for every $w\in \tilde W$. Since $\tilde W\to W$ is a finite isogeny, it follows that $\tilde W+Z_{w_0}$ is a divisor contained in $X$. Since $X$ is irreducible, $X =\tilde W+Z_{w_0}$ which is not ample (for example because it is disjoint from $\tilde W + z$ for any $z \in A\setminus X$). This is impossible.
\end{proof}
As mentioned before Claim \ref{c-v}, by Theorem \ref{thm:non_vanishing} we know that $R^j \pi_*( Q \otimes \Omega_0) \neq 0$ for some $Q \in \hat A$. Hence, Claim \ref{c-v} yields a contradiction. That is, our assumption that $\tau \neq \sO_X$ is false. Equivalently, $X$ is strongly $F$-regular. 
\end{proof}


\begin{proof}[Proof of Theorem \ref{t-mt}] Choose some integer $e >0$ such that $m | p^e -1$. Consider then the Frobenius trace maps
\[ \Phi ^e:  F^e_* \sO_A(X)\cong F^e_* \OO _A((1-p^e)(K_A+D/m)+p^eX)\to \OO _A(X)\] and let $\tau \otimes \OO _A(X)$ be the image of $\Phi ^e$ for $e\gg 0$. We aim to show that $\tau =\OO _A$. Proceeding by contradiction, assume that $\tau \ne \OO _A$.  Since $\lfloor D/m\rfloor =0$, then $\tau =\mathcal I_Z$ where $Z$ is a subscheme of $A$ of codimension $\geq 2$. We set $\Omega _0 :=\tau \otimes \OO _A(X)$ and we use the associated notation defined in Notation \ref{notation:basic}. As above, we have $\Lambda :={\rm hocolim }(\Lambda _e)=\mathcal H ^0(\Lambda )\ne 0$ and hence $\tilde \Lambda _0\ne 0$. Note that as $X$ does not contain every translate of $Z$, we have $H^0(\tau \otimes \OO _A(X)\otimes P)= 0$ for some (and hence general)  $P\in \hat A$.  Thus $0\leq \dim {\rm Supp}(\tilde \Lambda_0)<\dim A$. 

We will now show that $ \dim {\rm Supp}(\tilde \Lambda_0)\geq 1$. Suppose instead that $ \dim {\rm Supp}(\tilde \Lambda_0)=0$, then $\tilde \Lambda_0$ is Artinian and hence each $\tilde \Omega _e$ is a homogeneous vector bundle (i.e. a successive extension of topologically trivial line bundles). But then since the composition \[ \Omega _t=F^t_*\Omega _0\to \tilde \Omega _0 \to \Omega _0\]
is surjective,  then $F^t_*(\OO _A (X)\otimes \tau)\to \tilde \Omega _0$ is non-trivial.
But as $\tilde \Omega _0$ is a successive extension of topologically trivial line bundles, we must have \[0\ne {\rm Hom}(F^t_*(\OO _A (X)\otimes \tau),P)={\rm Hom}(F^t_*(\OO _A (X')\otimes \tau),\OO _A)\] for some element $P\in {\rm Pic}^0(A)$ and some translate $X'$ of $X$ such that $\OO _A (X')\otimes P^{p^t}=\OO _A (X)$. By Grothendieck duality, we have
\[{\rm Hom}(F^t_*(\OO _A (X')\otimes \tau),\OO _A)=F^t_*{\rm Hom}((\OO _A (X')\otimes \tau),\OO _A).\] But, taking reflexive hulls,  any non-trivial homomorphism $\OO _A (X')\otimes \tau\to \OO _A$ gives rise to a non-trivial homomorphism $\OO _A (X')\to \OO _A$, which is impossible as $H^0(\OO _A(-X') )=0$.

If $\eta$ is a maximal dimensional associated prime of $\tilde \Lambda _0$, then $\bar 
\eta =P+\hat W$ where $P$ is torsion and $\hat W\subset \hat A$ is an abelian subvariety of codimension $j\geq 1$. 
Let $\pi :A \to W$ be the dual projection, $\lambda : A\to \hat A$ be the morphism defined by $a\to \OO _A(T_a^*X-X)$ and $\tilde W=\lambda ^{-1}(\hat W +P)\subset A$,   as in the proof of Theorem \ref{t-main}.
\begin{claim}\label{c-vv} $R^j\pi _*( \Omega _0\otimes Q)=0$ for every $Q\in {\rm Pic }^0(A)$.\end{claim}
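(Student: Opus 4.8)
The plan is to mimic the proof of Claim~\ref{c-v} from the proof of Theorem~\ref{t-main}, but now in the pluri-theta setting where $\Omega_0 = \tau\otimes\OO_A(X)$ and $Z = V(\tau)$ has codimension $\geq 2$ in $A$. Arguing by contradiction, I would fix $Q\in\Pic^0(A)$ with $R^j\pi_*(\Omega_0\otimes Q)\neq 0$ and push forward the short exact sequence $0\to \tau\otimes\OO_A(X)\otimes Q\to \OO_A(X)\otimes Q\to \sC\to 0$ to $W$, where $\sC$ is supported on $Z$. Since $\OO_A(X)\otimes Q$ is ample, its higher direct images along $\pi$ vanish (the fibers $A_y$ of $\pi$ satisfy $H^i(A_y, \OO_A(X)\otimes Q|_{A_y})=0$ for $i>0$), so we get a surjection $R^{j-1}\pi_*\sC\twoheadrightarrow R^j\pi_*(\Omega_0\otimes Q)\neq 0$. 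Because $\sC$ is supported on $Z$, this forces some fiber $Z_{w_0}=Z\cap A_{w_0}$ to have codimension $\leq j-1$ \emph{in $A_{w_0}$}, equivalently codimension $\leq g-1$ in $A$; combined with $Z$ having codimension $\geq 2$ in $A$ this does not immediately give a contradiction, so I would instead extract: $\codim_{A_{w_0}} Z_{w_0}\leq j-1$, i.e. $\dim Z_{w_0}\geq (g-j)-(j-1)$, while $\dim Z_{w_0}\leq \dim Z\cap A_{w_0}$.

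**Next I would** run the translation argument exactly as in Claim~\ref{c-v}. For $R\in (P+\hat W)\setminus\{\sO_A\}$, semicontinuity together with $\dim\hat W = g-j > 0$ and the containment $\Supp\tilde\Lambda_0 \supseteq P+\hat W$ gives $H^0(A,\OO_A(X)\otimes\tau\otimes R^{-1})\neq 0$ for \emph{all} $R\in P+\hat W$ (here I use that $H^0(A,\Omega_0\otimes R^{-1})\neq 0$ on the support of $\tilde\Lambda_0$, which is the analog of \eqref{eq:main:tilde_Lambda_0}). Translating via the isomorphism $\lambda$, this says $H^0\big(A,\OO_A(T^*_{-w}X)\otimes\tau'\big)\neq 0$ — or in the pluri-theta language $H^0(A,\tau\otimes\OO_A(T^*_{-w}X'))\neq 0$ where $X'$ is the appropriate translate of $X$ — for every $w\in\tilde W$. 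This means $T^*_w Z$ is contained in the support of $T^*_w X$'s section, i.e. $Z_{w_0}+\tilde W$ is contained in (a divisor linearly equivalent to) some translate of $X$. Since $\tilde W\to W$ is a finite isogeny and $Z_{w_0}$ has positive dimension (once we know $j\geq 1$ forces it), $\tilde W + Z_{w_0}$ would have dimension $\geq \dim\tilde W + \dim Z_{w_0} \geq (g-j) + ((g-j)-(j-1))$; I need this to be $\geq g-1$ to conclude it is a divisor, and then use irreducibility of... wait — here $X$ is a pluri-theta divisor and $D\in|mX|$, not $X$ itself; the divisor $D$ need not be irreducible.

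**So the main obstacle,** and the place where the argument genuinely differs from Theorem~\ref{t-main}, is handling irreducibility: in Theorem~\ref{t-main} one concluded $X = \tilde W + Z_{w_0}$ using that the $\Theta$-divisor $X$ is irreducible, but in Theorem~\ref{t-mt} the relevant sections live in $|m X'|$ for translates $X'$ of the \emph{irreducible} theta divisor $X$, and $\tau = \mathcal I_Z$ measures the non-klt-ness of $(X, D/m)$, with $D\in|mX|$. I expect one recovers the contradiction by noting $Z\subseteq\Supp D$ (the cosupport of the test ideal of $(A, \frac{1}{m}D - (K_A+X))$-type data sits inside $D$), that every translate $T^*_w D$ for $w\in\tilde W$ contains $T^*_w Z_{w_0}$, hence $\tilde W + Z_{w_0}\subseteq \bigcap$ (up to linear equivalence) of these — and since $D\sim mX$ with $X$ irreducible and $X$ is a principal polarization (so $h^0(\OO_A(X))=1$ and $\OO_A(X)$ generates a ray that is not "movable past itself"), the divisor $\tilde W + Z_{w_0}$, being swept out by translation along $\tilde W$, cannot be contained in the fixed irreducible divisor supporting the unique section of $\OO_A(X)$ unless it fails to be effective+ample — a contradiction since any component of $X$ is ample. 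I would make this precise by working with $X$ (irreducible, so $h^0=1$) rather than $D$: since $m\cdot X' \sim D$ and $X'$ is the translate, and $T^*_w Z_{w_0}\subseteq \Supp(T^*_w D')=m\cdot\Supp(T^*_w X')$ forces $T^*_w Z_{w_0}\subseteq \Supp X'$, we get $\tilde W + Z_{w_0}\subseteq$ a translate of the irreducible divisor $X$, and then $X = \tilde W + Z_{w_0}$ as in Theorem~\ref{t-main}, which is not ample — contradicting the ampleness of $X$. Thus $R^j\pi_*(\Omega_0\otimes Q)=0$ for all $Q$, proving Claim~\ref{c-vv}.
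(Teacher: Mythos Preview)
Your approach is the paper's approach, but you introduce two errors that are easily repaired.

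\textbf{Fiber dimension.} The morphism $\pi:A\to W$ has relative dimension $j$, since $\dim W=\dim\hat W=g-j$. So the fibers $A_{w_0}$ have dimension $j$, not $g-j$. From $R^{j-1}\pi_*\sC\neq 0$ with $\sC$ supported on $Z$ you get $\dim Z_{w_0}\geq j-1$ (equivalently codimension $\leq 1$ in $A_{w_0}$, not $\leq j-1$). Since $Z_{w_0}$ lies in a single fiber of $\pi$ while $\tilde W\to W$ is a finite isogeny, the sum $Z_{w_0}+\tilde W$ has dimension $\geq (j-1)+(g-j)=g-1$, so it is a divisor with no further hypothesis needed. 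Your formula $\dim Z_{w_0}\geq (g-j)-(j-1)$ is simply wrong and is the only reason you could not close the dimension count.

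\textbf{Irreducibility.} There is no obstacle. In Theorem~\ref{t-mt} the theta divisor $X$ is still irreducible by hypothesis, and $\Omega_0=\tau\otimes\OO_A(X)$. The nonvanishing $H^0(A,\Omega_0\otimes R^{-1})\neq 0$ for $R\in P+\hat W$ produces a nonzero section in
\[
H^0\big(A,\tau\otimes\OO_A(X)\otimes R^{-1}\big)\subseteq H^0\big(A,\OO_A(X)\otimes R^{-1}\big),
\]
and $\OO_A(X)\otimes R^{-1}\cong\OO_A(T_{-w}^*X)$ for $w\in\tilde W$ is already the line bundle of an \emph{irreducible} translate of $X$. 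The divisor $D\in|mX|$ plays no role in the claim, so your detour through $D$ and $mX'$ is unnecessary. You conclude exactly as in Claim~\ref{c-v}: $Z_{w_0}+\tilde W$ is a non-ample divisor contained in $X$, hence equals $X$, contradicting ampleness. This is precisely the paper's argument.
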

\begin{proof}  Suppose the opposite, $R^j\pi _*( \Omega _0\otimes R)\neq 0$ for some $R \in \hat A$.  Consider the short exact sequence
\[ 0\to \OO _A(X)\otimes \tau \otimes R \to  \OO _A(X)  \otimes R\to \QQ  \to 0\]
Since $R^j\pi _* (\OO _A(X) \otimes R) =0$, we have $R^{j-1}\pi _* \QQ\ne 0$. Hence, if $\mathcal I_Z=\tau$, then there exists a fiber $Z_{w_0}$ of dimension $\geq j-1$ for some $w_0\in W$. But then $H^0(A,\OO _A(T_{-w}^*X)\otimes \tau )\ne 0$ for any $w \in \tilde W$ which implies that $X$ contains the non-ample divisor $Z_{w_0}+ \tilde W$. This is impossible as $X$ is irreducible.
\end{proof}

By Claim \ref{c-v}, by Theorem \ref{thm:non_vanishing} we know that $R^j \pi_*( Q \otimes \Omega_0) \neq 0$ for some $Q \in \hat A$. Hence, Claim \ref{c-vv} yields a contradiction. That is, our assumption that $\tau \neq \sO_A$ is false. Equivalently, $(X,D/m)$ is purely $F$-regular. \end{proof}


\begin{thebibliography}{asdf}
\bibitem[BBE]{BBE} P. Berthelot, S. Bloch, and H. Esnault, {\it On Witt vector cohomology for singular varieties}, https://arxiv.org/pdf/math/0510349.pdf
\bibitem[BS13]{BS13}
{ M.~Blickle and K.~Schwede}, \emph{{$p^{-1}$}-linear maps in algebra and
  geometry}, Commutative algebra, Springer, New York, 2013, pp.~123--205.
\bibitem[EGA]{EGA} A. Grothendieck and J. Dieudonn\'e, {\it El\'ements de G\'eom\'etrie Alg\'ebrique, III,  \'Etude cohomologique des faisceaux coherents}, Publ. Math. IHES 11 (1961) and 17 (1963)
\bibitem[EL97]{EL97} L.  Ein  and  R.  Lazarsfeld, {\it Singularities  of  theta  divisors and  the  birational  geometry of  irregular  varieties},
Journal of the American Mathematical society, Volume 10, Number 1, January 1997, 243--258.
\bibitem[GL90]{GL90} M. Green and R. Lazarsfeld, {\it Deformation theory, generic vanihsing theorems, and some conjectures of Enriques, Catanese and Beauville.} Invent. Math. 90 (1987),389--407.
\bibitem[GL91]{GL91} M. Green and R. Lazarsfeld, {\it Higher obstructions to deforming cohomology groups of line bundles.} Jour. Amer. Math. Soc. Vol.4, Num. 1 (1991).
\bibitem[Gro65]{Gro65}
{\sc A.~Grothendieck}: \emph{\'{E}l\'ements de g\'eom\'etrie alg\'ebrique.
  {IV}. \'{E}tude locale des sch\'emas et des morphismes de sch\'emas. {II}},
  Inst. Hautes \'Etudes Sci. Publ. Math. (1965), no.~24, 231.

\bibitem[Hacon99]{Hacon99} C. Hacon, {\it Divisors on Principally Polarized Abelian Varieties}, Compositio Mathematica 119 (3):327--335, December 1999

\bibitem[Hacon04]{Hacon04} C. Hacon, {\it A derived category approach to generic vanishing.} J. fur die reine und angewandte Mathematik 575 (2004), 173--187.
\bibitem[Hacon11]{Hacon11} C. D. Hacon, {\it Singularities  of  pluri-theta  divisors  in  Char $p>0$.}
Advanced
studies in Pure Mathematics Vol 65 (2011).


\bibitem[HK12]{HK12} C. D. Hacon, S. Kov\'acs, {\it Generic vanishing fails for singular varieties and in characteristic $p>0$.} https://arxiv.org/abs/1212.5105
\bibitem[HP16]{HP16} C. D. Hacon, Z. Patakfalvi, {\it Generic vanishing in characteristic
$p>0$ and the characterization of ordinary abelian varieties.} Amer. Jour. Math. 138 (2016), no. 4,
963--998.
\bibitem[HPZ19]{HPZ19} C. D. Hacon, Z. Patakfalvi and Lei Zhang, {\it Birational characterization of abelian varieties and ordinary abelian varieties in characteristic $p>0$.}    Duke Math. J. 168 (2019), no. 9, 1723--1736. 
\bibitem[HK15]{HK15} C. Hacon and S. Kov\'acs, {\it Generic vanishing fails for singular varieties in characteristic 
$p>0$.}  arXiv:1212:5105; to appeared in: Recent advances in algebraic geometry: a volume in honor of Rob Lazarsfeld's 60th birthday, Cambridge University Press, 2015.
\bibitem[Hart77]{Hart77} R. Hartshorne, {\it Algebraic Geometry.} Graduate Texts in Mathematics, {\bf 52}, Springer 1977.
\bibitem[Kollar86I]{Kollar86I} J. Koll\'ar, {\it Higher direct images of dualizing sheaves I}. Ann. Math. 123, 1986, 11--42.
\bibitem[Kollar86II]{Kollar86II}   J. Koll\'ar, {\it Higher direct images of dualizing sheaves II.} Ann. Math. 124, 1986, 171--202
\bibitem[Mukai81]{Mukai81}S. Mukai, {\it Duality between $D(X)$ and $D(\hat X)$, with application to Picard sheaves}, Nagoya Math. J. 81 (1981), 153--175.
\bibitem[Milicic]{Milicic} D. Milicic, {\it   Lectures on Derived Categories}, http://www.math.utah.edu/~milicic/Eprints/dercat
\bibitem[Neeman96]{Neeman96} A. Neeman, {\it The Grothendieck duality theorem via Bousfield’s techniques and Brown representability,} J. Amer. Math. Soc. 9 (1996), 205--236
\bibitem[PP11]{PP11} G. Pareschi and M. Popa, {\it GV-sheaves, Fourier-Mukai transform, and Generic Vanishing.} Amer. J. Math. 133 no.1 (2011), 235--271
\bibitem[PR04]{PR04}
{\sc R.~Pink and D.~Roessler}: \emph{On {$\psi$}-invariant subvarieties of
  semiabelian varieties and the {M}anin-{M}umford conjecture}, J. Algebraic
  Geom. \textbf{13} (2004), no.~4, 771--798.

\bibitem[{Sta}]{stacks-project}
{ T.~{Stacks Project Authors}}: \emph{{\itshape Stacks Project}}.
\bibitem[ST12]{Schwede_Tucker_A_survey_of_test_ideals}
{ K.~Schwede and K.~Tucker}, \emph{A survey of test ideals}, Progress in
  commutative algebra 2, Walter de Gruyter, Berlin, 2012, pp.~39--99.
\bibitem[Wat16]{Wat16} A. M. Watson, {\it Irreducible theta divisors of principally polarized abelian varieties are strongly F-regular.} arXiv:1605.09657v1
\bibitem[WZ]{WZ} A. M. Watson, Y. Zhang, {\it On the generic vanishing theorem of Cartier modules}, https://arxiv.org/pdf/1404.2669.pdf
\end{thebibliography}
\end{document}